\theoremstyle{definition}
\newtheorem{defi}{Definition}
\newtheorem{example}[defi]{Example}
\newtheorem{remark}[defi]{Remark}
\theoremstyle{plain}
\newtheorem{theorem}[defi]{Theorem}
\newtheorem{lem}[defi]{Lemma}
\newtheorem{prop}[defi]{Proposition}
\newtheorem{cor}[defi]{Corollary}
\def\IN{\mathbb{N}} 
\def\IZ{\mathbb{Z}} 
\def\IQ{\mathbb{Q}} 
\def\IR{\mathbb{R}} 
\def\id{\mathrm{id}} 
\def\dom{\mathrm{dom}} 
\def\leqS{\leq_\mathrm{S}}
\begin{document}

\title[]{Regainingly approximable numbers\\ and sets} 

\author{Peter Hertling, Rupert H\"olzl and Philip Janicki}
\address{Fakult\"at f\"ur Informatik, Universit\"at der Bundeswehr M\"unchen, \\ 
85577 Neubiberg, Germany}
\email{peter.hertling@unibw.de}
\email{r@hoelzl.fr}
\email{philip.janicki@unibw.de}


\begin{abstract} We call an $\alpha \in \IR$ \emph{regainingly approximable} if there exists
a computable nondecreasing sequence $(a_n)_n$ of rational numbers converging to $\alpha$
with $\alpha - a_n < 2^{-n}$ for infinitely many~${n \in \IN}$.
We also call a set $A\subseteq\IN$ \emph{regainingly approximable} if it is c.e.~and the strongly left-computable
number $2^{-A}$ is regainingly approximable.
We show that the set of regainingly approximable sets is neither closed under union nor intersection and that every c.e.~Turing degree contains such a set.
Furthermore, the regainingly approximable numbers lie properly between the computable and the left-computable numbers and are not closed under addition.
While regainingly approximable numbers are easily seen to be i.o.\  $K$\nobreakdash-trivial, we construct such an~$\alpha$ such that ${K(\alpha \restriction n)>n}$ for infinitely many~$n$. Similarly, there exist regainingly approximable sets whose initial segment complexity infinitely often reaches the maximum  possible for c.e.\ sets. Finally, there is a uniform algorithm splitting regular real numbers into two regainingly approximable numbers that are still regular.

\smallskip
\noindent
{\bf Keywords:} left-computable numbers; effective approximation; computably enumerable sets; splitting; Turing degrees; Kolmogorov complexity

\smallskip
\noindent
{\bf AMS classification:} 03D78, 03D25, 03D30, 03D32, 68Q30
\end{abstract}

\maketitle


\section{Introduction}

We call a sequence $(a_n)_n$ of real numbers
\emph{increasing} if, for all $n\in\IN$, $a_n < a_{n+1}$, and 
\emph{nondecreasing} if, for all $n\in\IN$, $a_n \leq a_{n+1}$, and we define the terms
\emph{decreasing} and \emph{nonincreasing} analoguosly.
A real number is called \emph{left-computable}
if there exists a computable nondecreasing sequence of rational numbers converging to it;
note that some textbooks~\cite{Nie2009,DH2010} call such numbers \emph{left-c.e.}.
A real number $\alpha$ is called \emph{computable}
if there exists a computable sequence $(a_n)_n$ of rational numbers 
satisfying $|\alpha - a_n| < 2^{-n}$, for all $n\in\IN$.
It is easy to see that any computable real number is left-computable.
In this article, we study real numbers that are limits of computable, nondecreasing, converging sequences $(a_n)_n$ of rational
numbers which are not required to satisfy the condition $|\alpha - a_n| < 2^{-n}$ for all but only for infinitely many~$n\in\IN$.

\begin{defi}
\label{definition:regaining}
We call a real number $\alpha$ \emph{regainingly approximable} if there exists
a computable nondecreasing sequence of rational numbers~$(a_n)_n$ converging to $\alpha$
such that ${\alpha - a_n < 2^{-n}}$ holds for infinitely many~${n \in \IN}$.
\end{defi}
Intuitively speaking, regainingly approximable numbers are not required to possess  approximations that converge as speedily as those to computable numbers, but they must possess approximations that ``catch up'' with that speed infinitely often  while being allowed to ``dawdle'' arbitrarily in between.
Trivially, every regainingly approximable number is left-computable,
and every computable number is regainingly approximable.
In fact, even the following stronger observation holds.
\begin{example}
\label{example:non-high_numbers}
Every non-high left-computable number is regainingly approximable.
Indeed, let $\alpha$ be a left-computable number that is not high.
Fix a computable nondecreasing sequence of rational numbers $(a_n)_n$ converging to~$\alpha$. An increasing function $s \colon \IN \to \IN$
with ${\alpha - a_{s(n)} < 2^{-n}}$ for all $n \in \IN$ can be computed from
oracle $\alpha$. 
Since $\alpha$~is not high, according to a characterization of highness by Martin~\cite{Mar1966} (see Soare~\cite[Theorem XI.1.3]{Soa1987}),
there exists a computable, w.l.o.g.\ increasing function $r \colon \IN\to\IN$
with $r(n) \geq s(n)$ for infinitely many~${n \in \IN}$.
Then for any $n \in \IN$ with $r(n) \geq s(n)$ we obtain
\begin{equation*}
	\alpha - a_{r(n)} \leq \alpha - a_{s(n)} < 2^{-n} .
\end{equation*}
Therefore, $\alpha$ is regainingly approximable.
\end{example}

The remainder of the article is structured as follows:
\begin{itemize}
\item
In Section~\ref{section:characterizations} we begin by showing that Definition~\ref{definition:regaining}	is robust under minor modifications, with the equivalences between the different formulations turning out to be effectively uniform. We also state some computability-theoretic properties of the set $\{n \in \IN \colon \alpha - a_n < 2^{-n}\}$, that is, the set of points where a regaining approximation $(a_n)_n$ ``catches up.''
	
\item
	We begin Section~\ref{section:sets} by giving several characterizations of the sets $A\subseteq \IN$ that have the property that the real number 
	\[2^{-A} := \sum_{a \in A} 2^{-(a+1)} \]
	is regainingly approximable.
	For the rest of the section we then focus on c.e.~sets $A\subseteq\IN$: 
	A real number $x\in [0,1]$ is called \emph{strongly left-computable} if there exists a computably enumerable set
	$A\subseteq \IN$ with $x=2^{-A}$.
	It is well-known that the set of strongly left-computable numbers is a proper superset of the set of computable
   numbers in $[0,1]$ and a proper subset of the set of left-computable numbers in $[0,1]$.
	We consider c.e.~sets~$A$ such that $2^{-A}$ is regainingly approximable 
   and call such sets \emph{regainingly approximable}. We give different characterizations of this class of sets, and note that, unlike for regainingly approximable numbers, not all arguments here can be 
   fully effectively uniform.
		
\item
	In Section~\ref{section:sets-computability},
	we state further computability-theoretic properties of regainingly approximable sets.
	First, we observe an easy splitting result, namely that every c.e.~set $C\subseteq\IN$ is the union of two disjoint regainingly approximable sets $A,B\subseteq\IN$. Next, we prove that there is a c.e.~set that is not regainingly approximable.
	Finally, we show that every c.e.~Turing degree contains a regainingly approximable set.
	
\item
	In Sections~\ref{section:Kolmogorov-complexity} and~\ref{sdfhsdfhjsdfdgdgddf} we look at the Kolmogorov complexity of regainingly approximable
	numbers and sets.
	On the one hand, we observe that every regainingly approximable number is i.o.\  $K$-trivial and, hence, not 
	Martin-L\"of random.
On the other hand, we show that there exists a regainingly approximable number  such that the prefix-free Kolmogorov complexity of infinitely many of its initial segments (in a sense to be explained in Section~\ref{dfnasdnsdfdfg}) exceeds their length, and that there exist regainingly approximable sets whose initial segments infinitely often have the maximal  Kolmogorov complexity that is possible for a c.e.\ set.
	
\item
	In Section~\ref{section:arithmetical-properties} we observe that regainingly approximable sets
	and numbers behave badly with respect to arithmetical operations:
	The set of regainingly approximable sets is closed neither under union nor under intersection, and while the set of regainingly approximable numbers is closed downwards under Solovay reducibility it is not closed under addition.
	
\item
	Finally, in Section~\ref{section:SplittingRegularReals} 
	we formulate a uniformly effective  algorithm for splitting c.e.~sets into two regainingly approximable sets, which strengthens the splitting result of Section~\ref{section:sets-computability}. In fact, this algorithm proves a stronger result, namely that every real that is regular in the sense of Wu~\cite {Wu2005} can be split 
	into two regular reals that are additionally regainingly approximable.
\end{itemize}
We close this introduction by mentioning that isolating the notion of regaining approximability was a key step needed for H\"olzl and Janicki's~\cite{HJ2023b} negative answer to the question posed by Merkle and Titov~\cite{MT2020} whether among the left-computable numbers, being Martin-L\"of random is equivalent to being non-speedable.

\section{Notation}\label{dfnasdnsdfdfg}

For general background on computability theory and algorithmic randomness, we refer the reader to the usual textbooks~\cite{Soa1987,Nie2009,DH2010}. 
We will use \emph{Cantor's pairing function} $\langle \cdot,\cdot \rangle\colon \IN^2\to\IN$
defined by
\[ \langle m,n\rangle := \frac{1}{2}\left(m + n\right)\left(m + n + 1\right) + n , \]
for all~$m,n\in\IN$. This is a computable bijection, so let
$\pi_1\colon \IN\to\IN$ and $\pi_2\colon \IN\to\IN$ denote the two components of its inverse function, that is,  
$\langle \pi_1(n),\pi_2(n)\rangle = n$ for all~${n\in\IN}$. 
For $n\in\IN$, we write $\log(n)$ for the length of the binary representation of $n$, thus
$\log(n)=\lfloor \log_2(n) \rfloor + 1$ if $n>0$ and $\log_2(0)=1$. Hence, $\log(n)$ is  identical to the usual binary logarithm up to a small additive constant.

For functions $f,g\colon \IN \to \IN$ we write $f \leq^+ g$ if there exists a constant~${c \in \IN}$ such that $f(n) \leq g(n) + c$ for all~$n$.
A set $A\subseteq\IN$ will often be identified with its characteristic sequence, that is, we define $A(n):=1$ if $n\in A$, and $A(n):=0$ if $n\not\in A$.

\medskip

We end this section by laying out the following conventions about how, in the remainder of the article, we will (often tacitly) identify real numbers with infinite binary sequences and subsets of the natural numbers:
\begin{itemize}
	\item If we ascribe to some $x\in \IR$ a property that is formally defined only for infinite binary sequences, then we mean that a specific, 
    uniquely determined infinite binary sequence has that property; namely, 
	the sequence containing infinitely many ones and that is a binary representation of the unique real number $y\in(0,1]$ with $x-y\in\IZ$.
	\item If we ascribe  to some $x\in \IR$  a property that is formally defined only for subsets of $\IN$, then we mean that the uniquely determined infinite set $A\subseteq\IN$, whose characteristic sequence
	is the infinite binary sequence defined in the previous item, has that property.
	\item When we talk about ``initial segments'' of some $x\in \IR$, then we are again referring to the 
	initial segments of the infinite binary sequence defined in the first item.	This will be used for instance when we talk about the plain or prefix-free Kolmogorov complexity of initial segments of some such~$x$.
	\item Similarly, when we talk about the initial segments of an $A\subseteq\IN$, then we are referring to its characteristic sequence.
\end{itemize}

\section{Robustness}
\label{section:characterizations}

In this section, we first show that slight changes to the definition of regainingly approximable numbers do not lead to a different notion.
%
The following lemma will be useful; note that no computability assumptions are made.

\begin{lem}
\label{lemma:unbounded-function}
Let $(a_n)_n$ be a nondecreasing sequence of real numbers converging to some real number $\alpha$
such that, for infinitely many $n \in \IN$, $\alpha - a_n < 2^{-n}$.
Then, for every unbounded function $f\colon\IN\to\IN$ there exist infinitely many $m$ with
$\alpha - a_{f(m+1)} < 2^{-f(m)}$.
\end{lem}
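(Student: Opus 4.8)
The plan is to exploit the monotonicity of $(a_n)_n$ to ``sandwich'' an element of the catch-up set $S := \{n \in \IN : \alpha - a_n < 2^{-n}\}$ between two consecutive values of $f$. Concretely, given $n \in S$ that is large enough, I want to produce an $m \in \IN$ with $f(m) < n \leq f(m+1)$. Once I have that, monotonicity of $(a_n)_n$ gives $a_{f(m+1)} \geq a_n$, hence $\alpha - a_{f(m+1)} \leq \alpha - a_n$, and since $n \in S$ while $f(m) < n$ I get $\alpha - a_{f(m+1)} \leq \alpha - a_n < 2^{-n} \leq 2^{-f(m)}$, which is exactly the desired inequality for that particular~$m$.

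The key definition is, for each $n \in \IN$ with $n > f(0)$, to set $k(n) := \min\{k \in \IN : f(k) \geq n\}$. This minimum exists because $f$ is unbounded, and $k(n) \geq 1$ precisely because $n > f(0)$. Writing $m := k(n) - 1 \geq 0$, minimality of $k(n)$ forces $f(m) < n$, while $f(m+1) = f(k(n)) \geq n$; this is the sandwiching I wanted. Since $S$ is infinite and $f(0)$ is a fixed number, there are infinitely many $n \in S$ with $n > f(0)$, and each such $n$ yields an admissible index $m = k(n) - 1$.

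It remains to verify that infinitely many \emph{distinct} values of $m$ arise in this way, which is the only point requiring care since $f$ need not be monotone. I will observe that $n \mapsto k(n)$ is nondecreasing, because the set $\{k : f(k) \geq n\}$ shrinks as $n$ grows, and that $k(n) \to \infty$ as $n \to \infty$: if $k(n) \leq K$ held for all $n$, then the finite set $\{f(0), \dots, f(K)\}$ would contain arbitrarily large values, contradicting its finiteness. Hence $m = k(n) - 1 \to \infty$ as $n \to \infty$ along $S$, so the set of indices $m$ obtained is infinite, and for each of them $\alpha - a_{f(m+1)} < 2^{-f(m)}$, as claimed. I do not foresee a serious obstacle here; the only subtlety is handling the non-monotonicity of $f$ via this ``first crossing'' definition of $k(n)$ together with the check that $k(n) \to \infty$.
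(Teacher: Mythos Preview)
Your proof is correct and follows essentially the same approach as the paper: both define a ``first crossing'' index (the paper sets $g(n):=\min\{m: f(m+1)>n\}$, you set $k(n):=\min\{k: f(k)\ge n\}$ and take $m=k(n)-1$) to sandwich an element of the catch-up set between consecutive values of $f$, and both then use monotonicity of $(a_n)_n$ to obtain the inequality. You are in fact slightly more explicit than the paper in justifying that infinitely many distinct $m$ arise, via the observation that $k(n)\to\infty$.
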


\begin{proof}
By assumption, the set
\[ A := \{n\in\IN \colon n  \geq f(0)  \text{ and }\alpha - a_n < 2^{-n}\} \]
is infinite.
We define a function $g\colon A\to\IN$ by 
\[ g(n):=\min\{m\in\IN \colon f(m+1) > n \} , \]
for $n\in A$. The function $g$ is well-defined because $f$ is unbounded.
For every $n\in A$ we have $f(g(n)) \leq n < f(g(n)+1)$.
The set 
\[g(A) := \{g(n) \colon n \in A\}\]
is infinite.
Let $m\in g(A)$ be arbitrary
and let $n\in A$ be such that $m=g(n)$. Then
\[ \alpha - a_{f(m+1)} = \alpha - a_{f(g(n)+1)} \leq \alpha - a_n < 2^{-n} \leq 2^{-f(g(n))} = 2^{-f(m)} . 
\qedhere 
\]
\end{proof}

There are several obvious modifications of Definition~\ref{definition:regaining}
that one might want to consider. 
First, instead of allowing nondecreasing~$(a_n)_n$, we might require $(a_n)_n$ to be increasing.
Secondly, one might replace the condition $\alpha - a_n < 2^{-n}$ by 
the condition ${\alpha - a_n < 2^{-f(n)}}$ where ${f\colon \IN\to\IN}$
is an arbitrary computable, unbounded function of one's choice;
or, one might ask for this to hold only for {\em some} fixed computable, nondecreasing, unbounded function $f\colon \IN\to\IN$, a seemingly weaker requirement.
However, it turns out that none of these modifications make any difference.

\begin{prop}
\label{prop:equivalent-conditions}
For a real number $\alpha\in\IR$ the following statements are equivalent:
\begin{enumerate}
\item
$\alpha$ is regainingly approximable.
\item
There exists a computable, increasing sequence of rational numbers $(a_n)_n$ converging to $\alpha$
such that, for infinitely many $n \in \IN$, $\alpha - a_n < 2^{-n}$.
\item
For every computable, unbounded function $f\colon \IN\to\IN$ 
there exists a computable increasing sequence of rational numbers $(a_n)_n$ converging to $\alpha$
such that, for infinitely many $n \in \IN$, \[\alpha - a_n < 2^{-f(n)}.\]
\item
There exist a computable, nondecreasing, and unbounded function ${f\colon \IN\to\IN}$ and
a computable nondecreasing sequence of rational numbers $(a_n)_n$ converging to $\alpha$
such that, for infinitely many $n \in \IN$, $\alpha - a_n < 2^{-f(n)}$.
\end{enumerate}
\end{prop}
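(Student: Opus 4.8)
The plan is to prove the cycle of implications $(1)\Rightarrow(2)\Rightarrow(3)\Rightarrow(4)\Rightarrow(1)$, using Lemma~\ref{lemma:unbounded-function} as the workhorse for the last step. The implications $(3)\Rightarrow(4)$ and $(2)\Rightarrow(1)$ are essentially trivial: for the former, take $f=\id$, which is computable, nondecreasing, and unbounded, and any increasing sequence is in particular nondecreasing; for the latter, an increasing sequence is nondecreasing, so the defining property of regaining approximability is literally satisfied.

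For $(1)\Rightarrow(2)$, I would start from a computable nondecreasing sequence $(a_n)_n$ of rationals converging to $\alpha$ with $\alpha - a_n < 2^{-n}$ for infinitely many $n$, and I need to make it strictly increasing while preserving the infinitely-often condition (possibly at shifted indices). The standard trick is to add a tiny strictly decreasing computable perturbation: set $b_n := a_n - 2^{-n}\varepsilon_n$ for a suitable computable sequence, or more cleanly, thin out and interleave. Concretely, I would pass to a computable subsequence $a_{n_0} \le a_{n_1} \le \cdots$ that is already \emph{strictly} increasing by skipping repeated values (being careful that if $(a_n)_n$ is eventually constant then $\alpha$ is rational and we can write down an explicit increasing sequence by hand), and then re-index. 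The one subtlety is that thinning destroys the precise rate $2^{-n}$; but since at the catch-up points we had $\alpha - a_n < 2^{-n}$ and the sequence is nondecreasing, after re-indexing to position $m \le n$ we still get $\alpha - b_m \le \alpha - a_n < 2^{-n} \le 2^{-m}$, so the property survives. I would organize this so the argument is visibly effective/uniform, as promised in the surrounding text.

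For $(2)\Rightarrow(3)$ — the direction that a priori looks hardest because it must handle an \emph{arbitrary} computable unbounded $f$ — I would again use the nondecreasing sequence from $(2)$ together with Lemma~\ref{lemma:unbounded-function}. Given such an $f$, the lemma immediately yields infinitely many $m$ with $\alpha - a_{f(m+1)} < 2^{-f(m)}$. So I define the new sequence by $c_m := a_{f(m+1)}$ (with a minor fix to force strict monotonicity, as in the previous paragraph, e.g.\ by taking running maxima with a small decreasing nudge, or by only using $m$ large enough and interleaving). This is computable since $f$ and $(a_n)_n$ are, it converges to $\alpha$ because $f(m+1)\to\infty$, and it satisfies $\alpha - c_m < 2^{-f(m)}$ infinitely often by the lemma. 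The only care needed is that $f$ need not be monotone, so $m\mapsto f(m+1)$ need not be monotone either, which is exactly why I route through Lemma~\ref{lemma:unbounded-function} rather than trying to invert $f$ directly; the lemma was stated with no computability hypotheses precisely so it can be applied here.

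The main obstacle is $(4)\Rightarrow(1)$, where I must go from ``$\alpha - a_n < 2^{-f(n)}$ infinitely often'' for \emph{some} fixed computable nondecreasing unbounded $f$ back to the honest rate $2^{-n}$. The idea is to reparametrize the sequence by the ``inverse'' of $f$: define $h(n) := \min\{k : f(k) \ge n\}$, which is computable since $f$ is computable and unbounded, and nondecreasing, and set $b_n := a_{h(n)}$. Then $(b_n)_n$ is computable, nondecreasing, and converges to $\alpha$ since $h$ is unbounded. For the catch-up condition: at each of the infinitely many $n$ with $\alpha - a_n < 2^{-f(n)}$, put $m := f(n)$; then $h(m) \le n$ by definition of $h$, so $b_m = a_{h(m)} \le a_n$ by monotonicity, giving $\alpha - b_m \le \alpha - a_n < 2^{-f(n)} = 2^{-m}$. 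The set of such $m = f(n)$ is infinite because $f$ is unbounded (distinct large $n$ eventually give distinct $f(n)$, or simply: $f$ takes infinitely many values on any infinite set on which it is unbounded). Hence $(b_n)_n$ witnesses that $\alpha$ is regainingly approximable. I expect the only place needing genuine care is verifying that $\{f(n) : n \text{ a catch-up index}\}$ is infinite and cleanly handling the boundary/rationality cases; these are routine, and I would present $(4)\Rightarrow(1)$ as the technical heart with the other three implications dispatched quickly.
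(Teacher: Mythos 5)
Your cyclic decomposition $(1)\Rightarrow(2)\Rightarrow(3)\Rightarrow(4)\Rightarrow(1)$ differs in shape from the paper's (which proves $(3)\Rightarrow(2)\Rightarrow(1)\Rightarrow(3)$ together with $(1)\Rightarrow(4)$ and $(4)\Rightarrow(1)$), but two of your steps contain genuine errors. The clearest one is in $(4)\Rightarrow(1)$: with $h(m):=\min\{k\colon f(k)\ge m\}$ and $b_m:=a_{h(m)}$, a catch-up index $n$ and $m:=f(n)$ give $h(m)\le n$, hence $a_{h(m)}\le a_n$ by monotonicity, hence $\alpha-b_m\ge\alpha-a_n$ --- the inequality you wrote, ``$\alpha-b_m\le\alpha-a_n$'', is backwards, and the argument proves nothing. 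To make this work you must jump to the \emph{largest} index with a given $f$-value, so that $b_m$ sits at or above $a_n$; this is exactly what the paper's plateau-endpoint function does, via $g(n+1):=\max\{m\colon f(m)=f(g(n)+1)\}$ and then $a_n:=b_{g(n)}$, which yields $\alpha-a_n\le\alpha-b_k<2^{-f(k)}=2^{-f(g(n))}\le 2^{-n}$ for $k\le g(n)$.

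The step $(2)\Rightarrow(3)$ also needs repair. An unbounded $f$ need not satisfy $f(m+1)\to\infty$ (it can return to $0$ infinitely often), so $c_m:=a_{f(m+1)}$ need not converge to $\alpha$; running maxima fix convergence and nondecreasingness, but the ``small decreasing nudge'' you invoke for strictness is not innocuous: subtracting $\delta_m>0$ turns the catch-up bound into $\alpha-(c_m-\delta_m)<2^{-f(m)}+\delta_m$, and since $f$ is arbitrary and non-monotone you cannot simply pick a computable strictly decreasing $\delta_m\le 2^{-f(m)}$ without more work. The paper resolves both problems at once: it replaces $f$ by the strictly increasing majorant $g(n):=1+n+\max\{f(m)\colon m\le n\}$, so that $g\ge f+1$, applies Lemma~\ref{lemma:unbounded-function} with $g$, and sets $a_n:=b_{g(n+1)}-2^{-g(n)}$; the subtracted term forces strict increase, while the slack $g\ge f+1$ absorbs the resulting factor of $2$ into the target bound $2^{-f(n)}$. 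Your $(1)\Rightarrow(2)$ and $(3)\Rightarrow(4)$ are fine, but I recommend organizing $(2)\Rightarrow(3)$ around such a monotone majorant of $f$ rather than patching monotonicity post hoc.
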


Note that this implies in particular that it makes no difference whether we use ``$<$'' or ``$\leq$''
in the definition of regaining approximability. We would also like to point out that
all implications in the following proof are uniformly effective.

\begin{proof}\,

\smallskip	
	
$(2) \Rightarrow (1)$: Trivial.

\smallskip

$(3) \Rightarrow (2)$: Trivial.

\smallskip

$(1) \Rightarrow (3)$:
Let $\alpha$ be a regainingly approximable number.
Let $(b_n)_n$ be a computable nondecreasing sequence of rational numbers converging to $\alpha$
with $\alpha - b_n < 2^{-n}$ for infinitely many $n \in \IN$.
Let $f\colon \IN\to\IN$ be a computable, unbounded function.
Then the function $g\colon \IN\to\IN$ defined by 
\[ g(n) := 1+n+\max \{ f(m) \colon m \leq n \} \]
is computable, increasing, and satisfies $g(n)\geq f(n)+1$, for all $n\in\IN$.
In particular, $g$ is unbounded.
The sequence $(a_n)_n$ of rational numbers defined by
\[ a_n:=b_{g(n+1)} - 2^{-g(n)}\]
is computable and increasing and converges to $\alpha$.
By Lemma~\ref{lemma:unbounded-function}
there exist infinitely many $n$ with $\alpha - b_{g(n+1)} < 2^{-g(n)}$.
For all such $n$ we obtain
\[ \alpha - a_n = \alpha - b_{g(n+1)} + 2^{-g(n)} < 2^{-g(n)+1} \leq  2^{-f(n)} . \]
  
  \smallskip
  
$(1) \Rightarrow (4)$: Trivial.

\smallskip

$(4) \Rightarrow (1)$:
Assume that $f\colon \IN\to\IN$ is a computable, nondecreasing, and unbounded function 
and $(b_n)_n$ is a computable nondecreasing sequence of rational numbers converging to $\alpha$
such that, for infinitely many $n \in \IN$, $\alpha - b_n < 2^{-f(n)}$.
Define a function $g\colon \IN\to\IN$ via
\[ g(0):= \max\{m\in\IN \colon f(m)=f(0)\}
\]
and
\[ g(n+1) := \max\{m\in\IN \colon f(m)=f(g(n)+1)\} ,
\]
for $n\in\IN$; informally speaking, the graph of $f$ is an infinite increasing sequence of plateaus of finite length
and $g(n)$ gives the $x$-coordinate of the right-most point in the $(n+1)$-th plateau.

Clearly, $g$~is computable and increasing and satisfies $f(g(n))\geq n$
for all $n\in\IN$.
Furthermore, for every $k\in\IN$ there exists exactly
one $n\in\IN$ with $f(k)=f(g(n))$, and this $n$ satisfies $k \leq g(n)$.
The sequence $(a_n)_n$ of rational numbers defined by
$a_n := b_{g(n)}$, 
for all $n\in\IN$, is computable and nondecreasing and converges to $\alpha$.
By assumption, the set 
\[ B := \{k\in\IN \colon \alpha - b_k < 2^{-f(k)} \} \] 
is infinite.
Hence, the set
\[ A := \{n \in \IN \colon (\exists k \in B)\; f(k) = f(g(n)) \} \]
is infinite as well. 
Consider a number $n\in A$,
and let $k\in B$ be a number with $f(k)=f(g(n))$. Then $k\leq g(n)$ and
\[ \alpha - a_n = \alpha - b_{g(n)} \leq \alpha - b_k < 2^{-f(k)} = 2^{-f(g(n))} \leq 2^{-n} . 
\qedhere
\]
\end{proof}

Next, we observe that if a left-computable number $\alpha$ is regainingly approximable
then this will be apparent no matter which of its effective approximations we look at.

\begin{prop}
\label{prop:index-function}
Let $\alpha$ be a left-computable number, and let $(a_n)_n$ be a computable, nondecreasing
sequence of rational numbers converging to $\alpha$.
Then the following conditions are equivalent.
\begin{enumerate}
\item
$\alpha$ is a regainingly approximable number.
\item
There exists a computable, increasing function $r\colon \IN\to\IN$ such that, for 
infinitely many $n$, $\alpha - a_{r(n)} < 2^{-n}$.
\end{enumerate}
\end{prop}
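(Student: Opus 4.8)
The plan is to prove the two implications separately; $(2)\Rightarrow(1)$ is almost immediate, and $(1)\Rightarrow(2)$ needs only one short construction.

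For $(2)\Rightarrow(1)$, suppose $r\colon\IN\to\IN$ is computable and increasing with $\alpha - a_{r(n)} < 2^{-n}$ for infinitely many $n$. I will simply check that the sequence $(a_{r(n)})_n$ witnesses regaining approximability of $\alpha$: its members are rationals and it is computable; it is nondecreasing because $r$ is increasing and $(a_n)_n$ is nondecreasing; it converges to $\alpha$ because it is a subsequence of $(a_n)_n$; and it catches up infinitely often by hypothesis.

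For $(1)\Rightarrow(2)$, the idea is to compare the given approximation $(a_n)_n$ against some regaining approximation of $\alpha$. First I would invoke Proposition~\ref{prop:equivalent-conditions} to fix a computable \emph{increasing} sequence of rationals $(b_n)_n$ converging to $\alpha$ with $\alpha - b_n < 2^{-n}$ for infinitely many $n$; being strictly increasing with limit $\alpha$, it satisfies $b_n < \alpha$ for all $n$. Then I define $r_0(n) := \min\{k\in\IN \colon a_k > b_n\}$. Since $a_k\to\alpha > b_n$, this minimum exists, and the obvious search for it halts, so $r_0$ is computable; and since $b_n < b_{n+1}$ we have $\{k \colon a_k > b_{n+1}\}\subseteq\{k\colon a_k > b_n\}$, so $r_0$ is nondecreasing. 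Finally I set $r(n) := r_0(n) + n$, which is computable, strictly increasing, and satisfies $r(n)\geq r_0(n)$. For each $n$, monotonicity of $(a_k)_k$ gives $a_{r(n)}\geq a_{r_0(n)} > b_n$, hence $\alpha - a_{r(n)} < \alpha - b_n$; so for each of the infinitely many $n$ with $\alpha - b_n < 2^{-n}$ we obtain $\alpha - a_{r(n)} < 2^{-n}$, which is exactly what $(2)$ demands.

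The construction itself involves no difficult estimate; the one point that needs care — the main obstacle, such as it is — is the interplay between the two sequences. One must ensure that the search defining $r_0$ terminates, which is why I pass to an \emph{increasing} regaining approximation (forcing $b_n<\alpha$ strictly) rather than working with an arbitrary nondecreasing one; and one must turn the merely nondecreasing index function $r_0$ into a genuinely increasing $r$, which is harmless since enlarging indices only improves the catch-up inequality. Note also that this route avoids any case distinction on whether $\alpha$ is computable.
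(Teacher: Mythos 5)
Your proof is correct and follows essentially the same route as the paper: for $(2)\Rightarrow(1)$ you use $(a_{r(n)})_n$ as the witnessing approximation, and for $(1)\Rightarrow(2)$ you invoke Proposition~\ref{prop:equivalent-conditions} to obtain a strictly increasing regaining approximation $(b_n)_n$ and then define a computable increasing $r$ with $a_{r(n)}\geq b_n$. The only cosmetic difference is in how monotonicity of $r$ is enforced — the paper builds it into a recursive definition via the clause ``$m>r(n)$'', whereas you define a nondecreasing $r_0$ by a plain search and then pass to $r(n):=r_0(n)+n$ — but the underlying idea is identical.
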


Note that the proof is effectively uniform in both directions.

\begin{proof}
$(2) \Rightarrow (1)$: Assume that there exists a computable, increasing function
$r\colon \IN\to\IN$ such that  we have ${\alpha - a_{r(n)} < 2^{-n}}$ for infinitely many $n$.
Then the sequence $(b_n)_n$ of rational numbers defined by
${b_n:=a_{r(n)}}$ is computable, nondecreasing, converges to $\alpha$, and satisfies, for infinitely
many $n$, $\alpha - b_n < 2^{-n}$. Hence, $\alpha$ is regainingly approximable.

\smallskip

$(1) \Rightarrow (2)$: Assume that $\alpha$ is regainingly approximable.
By Proposition~\ref{prop:equivalent-conditions}
there exists a computable, increasing sequence $(b_n)_n$ of rational numbers converging to $\alpha$
such that there exist infinitely many~$n$ with $\alpha - b_n < 2^{-n}$.
We define a computable, increasing function ${r\colon \IN\to\IN}$ by
$r(0):=\min\{m\in\IN \colon a_m\geq b_0 \}$, and
\[ r(n+1) := \min\{m\in\IN \colon m > r(n) \text{ and } a_m\geq b_{n+1} \} , \]
for $n\in\IN$.
For all $n\in\IN$ we have $a_{r(n)} \geq b_n$.
Thus, for the infinitely many $n\in\IN$ with $\alpha - b_n < 2^{-n}$,
we obtain
\[ \alpha - a_{r(n)} \leq \alpha - b_n < 2^{-n} . 
\qedhere
\]
\end{proof}
	

We close this section by investigating the computability-theoretic properties of the set of points where a regaining approximation  ``catches up.''
\begin{prop}
Let $\alpha$ be a regainingly approximable number,
and let $(a_n)_n$ be a computable nondecreasing sequence of rational numbers
converging to $\alpha$ with $\alpha - a_n < 2^{-n}$ for infinitely many $n \in \IN$.
Then
\begin{equation*}
	A:= \{n \in \IN \colon \alpha - a_n < 2^{-n}\}
\end{equation*}
has the following properties:
\begin{enumerate}
\item
$\IN \setminus A$ is computably enumerable.
\item
The following are equivalent:
\begin{enumerate}
	\item $\alpha$ is computable.
	\item $A$ is decidable.
	\item $A$ is not hyperimmune.
\end{enumerate}
\end{enumerate}
\end{prop}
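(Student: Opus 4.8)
The plan is to prove part~(1) first, since it is reused in part~(2), and then to obtain part~(2) by running the cycle of implications $(a)\Rightarrow(b)\Rightarrow(c)\Rightarrow(a)$.

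For part~(1), the starting point is that $n\notin A$ is equivalent to $\alpha\ge a_n+2^{-n}$, and that this disjoins into $\alpha>a_n+2^{-n}$ and $\alpha=a_n+2^{-n}$. The first disjunct is uniformly semidecidable: since $(a_k)_k$ is a computable nondecreasing sequence with supremum $\alpha$, we have $\alpha>a_n+2^{-n}$ if and only if $a_k>a_n+2^{-n}$ for some $k$, so the set $\{n:\alpha>a_n+2^{-n}\}$ is c.e. It remains to deal with $E:=\{n:\alpha=a_n+2^{-n}\}$. If $E=\emptyset$ we are finished, because then $\IN\setminus A=\{n:\alpha>a_n+2^{-n}\}$. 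If $E\ne\emptyset$, I would fix some $n_0\in E$; then $\alpha=a_{n_0}+2^{-n_0}$ is a concrete rational, so $\IN\setminus A=\{n:a_{n_0}+2^{-n_0}\ge a_n+2^{-n}\}$ is in fact decidable. This case distinction is the only non-uniform point in part~(1); I view it as a minor technical nuisance rather than a real difficulty, though it does reflect the genuine possibility that the approximation $(a_n)_n$ never literally attains the value $a_{n_0}+2^{-n_0}$.

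For part~(2) I would argue as follows. $(a)\Rightarrow(b)$: if $\alpha$ is computable, fix a computable sequence $(c_k)_k$ of rationals with $|\alpha-c_k|<2^{-k}$; then $n\in A$ if and only if $c_k+2^{-k}-a_n<2^{-n}$ for some $k$, so $A$ is c.e., and combining this with part~(1) shows that $A$ is decidable. $(b)\Rightarrow(c)$: a decidable infinite set has a computable principal function, which is then trivially dominated by a computable function, so $A$ is not hyperimmune. $(c)\Rightarrow(a)$: from the assumption that $A$ is not hyperimmune we obtain a computable function $f$ with $f\ge p_A$ pointwise; setting $g(n):=f(n+1)$ yields a computable $g$ with $g(n)>n$ and $p_A(n+1)\in A\cap(n,g(n)]$ for every $n$ (here we use $p_A(n+1)\ge n+1$). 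Picking for each $n$ some $m\in A$ with $n<m\le g(n)$ and using that $(a_k)_k$ is nondecreasing, we get $0\le\alpha-a_{g(n)}\le\alpha-a_m<2^{-m}\le 2^{-(n+1)}$; hence $(a_{g(n)})_n$ is a computable rational sequence converging to $\alpha$ with error below $2^{-n}$ at every $n$, so $\alpha$ is computable.

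The step I expect to be the real content --- essentially the only one where something must be seen rather than merely checked --- is $(c)\Rightarrow(a)$: one has to recognize that ``$A$ is not hyperimmune'' amounts to a computable \emph{schedule} that predicts, sufficiently far out, where $A$ will next have an element, and that plugging this schedule into the given ``infinitely often fast'' approximation converts it into an ``always fast'' one. Given that observation, the remaining implications and part~(1) are routine, up to the harmless rational-versus-irrational case split noted above.
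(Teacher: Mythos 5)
Your proposal is correct and follows essentially the same route as the paper: part (1) by observing that $\alpha > a_n + 2^{-n}$ is semidecidable and disposing of the $\alpha = a_n + 2^{-n}$ case non-uniformly (the paper phrases the case split as ``$\alpha$ rational or $\IN\setminus A$ empty'' but it amounts to the same thing), (a)$\Rightarrow$(b) by the same semidecision of $A$ using a fast approximation to $\alpha$ together with part (1), and (c)$\Rightarrow$(a) by composing a computable bound on the principal function $p_A$ with the given approximation to produce a geometrically fast one. Your index shift $g(n):=f(n+1)$ is harmless but unnecessary, since $p_A(n)\geq n$ already gives $\alpha - a_{f(n)} < 2^{-p_A(n)} \leq 2^{-n}$ directly.
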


\begin{proof}\,
\begin{enumerate}
\item
If $\alpha$ is rational or if $\IN\setminus A$ is empty then $A$ and $\IN\setminus A$ are decidable; thus assume w.l.o.g.\ that $\alpha$ is irrational and that $\IN \setminus A$ is non-empty.
Then 
$\alpha - a_n\geq 2^{-n}$ if and only if $\alpha - a_n > 2^{-n}$.
Fix any~${e \in \IN \setminus A}$, and define a computable function $f \colon \IN \to \IN$ via
\begin{equation*}
	f(\left\langle m,n\right\rangle ) := \begin{cases}
		n	&\text{if $a_m - a_n \geq 2^{-n}$,} \\
		e	&\text{otherwise.}
	\end{cases}
\end{equation*}
Clearly, $f$ computably enumerates $\IN \setminus A$.
\item
(a) $\Rightarrow$ (b):
Suppose that $\alpha$ is computable.
Then there exists a computable sequence $(b_m)_m$ of rational numbers
with $\left|\alpha - b_m\right| < 2^{-m}$ for all $m \in \IN$.
Due to~(1) it suffices to show that $A$ is computably enumerable.
Fix some $d \in A$ and define a computable function $g \colon \IN \to \IN$ via
	\begin{equation*}
		g(\left\langle m,n\right\rangle ) := \begin{cases}
			n	&\text{if $b_m - a_n + 2^{-m} < 2^{-n}$,} \\
			d	&\text{otherwise.}
		\end{cases}
	\end{equation*}
Then $g$ computably enumerates $A$; indeed, if for some $m$ it holds that ${b_m - a_n + 2^{-m} < 2^{-n}}$ then ${\alpha - a_n < 2^{-n}}$; and for the other direction, if ${\alpha - a_n < 2^{-n}}$ holds, then 
for all $m$ satisfying ${\alpha - a_n + 2\cdot 2^{-m} < 2^{-n}}$ we have ${b_m - a_n + 2^{-m} < 2^{-n}}$.\\
(b) $\Rightarrow$ (c):
Trivial. \\
(c) $\Rightarrow$ (a):
By assumption $A$ is infinite. Define ${p_A \colon \IN \to \IN}$ as the uniquely determined increasing function
with ${p_A(\IN) = A}$.
Suppose that $A$ is not hyperimmune.
Then there exists a computable function $r \colon \IN \to \IN$ with $r(n) \geq p_A(n)$ for all $n \in \IN$ (see, for instance, Soare~\cite[Theorem V.2.3]{Soa1987}). Then we have
\begin{equation*}
	\alpha - a_{r(n)} \leq \alpha - a_{p_A(n)} < 2^{-p_A(n)} \leq 2^{-n}
\end{equation*}
for all $n \in \IN$. Hence, $\alpha$ is computable.
\qedhere
\end{enumerate}
\end{proof}

\section{Regainingly Approximable Sets}
\label{section:sets}

In this section we study sets $A\subseteq\IN$ such that 
$2^{-A}$ is regainingly approximable. We start by characterizing these sets in ways analogous to Propositions~\ref{prop:equivalent-conditions} and ~\ref{prop:index-function}, before then focusing on the 
particularly interesting case where $A$ is c.e.
%
%

We need the following terminology and easy lemma:
Following Soare~\cite{Soa1987} we call a sequence $(A_n)_n$ of finite sets $A_n\subseteq\IN$ a \emph{strong array}
if the function $n\mapsto \sum_{i\in A_n} 2^i$ is computable, and a strong array $(A_n)_n$ a
\emph{uniformly computable approximation from the left} of a set $A\subseteq\IN$ if
\begin{enumerate}
\item
for all $n\in\IN$, $A_n\subseteq\{0,\ldots,n-1\}$,
\item 
for all $n\in\IN$, $2^{-A_n} \leq 2^{-A_{n+1}}$, and
\item
for all $i\in\IN$, $\lim_{n\to\infty} A_n(i)=A(i)$.
\end{enumerate}
\begin{lem}[{Soare~\cite{Soa1969}}]
\label{lemma:uniformlycomputableapproximationfromtheleft}
For a set $A\subseteq\IN$ the following conditions are equivalent.
\begin{enumerate}
\item
The number $2^{-A}$ is left-computable.
\item
There exists a uniformly computable approximation from the left of the set $A\subseteq\IN$.
\end{enumerate}
\end{lem}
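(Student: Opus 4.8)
The plan is to prove the two implications of the equivalence separately; the direction from left-computability to the existence of the array will need a small case distinction, which turns out to be genuinely non-effective.

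For ``array $\Rightarrow$ left-computable'' I would put $a_n:=2^{-A_n}$. Since $(A_n)_n$ is a strong array, from the number $\sum_{i\in A_n}2^i$ one recovers the finite set $A_n$ effectively, hence also the rational $a_n=\sum_{i\in A_n}2^{-(i+1)}$, so $(a_n)_n$ is a computable sequence of rationals; it is nondecreasing by the second defining property of the approximation. Convergence to $2^{-A}$ follows from the third property plus a routine tail estimate: given $m$, the third property supplies an $N$ with $A_n(i)=A(i)$ for all $i<m$ and $n\ge N$, whence $|2^{-A_n}-2^{-A}|\le\sum_{i\ge m}2^{-(i+1)}=2^{-m}$. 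So $2^{-A}$ is left-computable.

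For the converse, write $\alpha:=2^{-A}$, so that the binary expansion of $\alpha$ is $0.\alpha_1\alpha_2\cdots$ with $\alpha_{i+1}=A(i)$, and fix a computable nondecreasing sequence of rationals $(q_n)_n$ converging to $\alpha$. I would split on whether $\alpha$ is a dyadic rational. Now $\alpha$ is dyadic exactly when its expansion is eventually constant, i.e.\ exactly when $A$ is finite or cofinite; in either of those cases $A$ is decidable and $A_n:=A\cap\{0,\dots,n-1\}$ is visibly a strong array with all three properties (monotonicity because $A_{n+1}\supseteq A_n$). Otherwise $\alpha$ is not dyadic; then $\alpha<1$, and we may assume $q_n<1$ throughout. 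I first normalise: replacing $q_n$ by $q'_n:=\max_{m\le n}\lfloor 2^nq_m\rfloor/2^n$ gives a computable nondecreasing sequence of dyadic rationals with $q'_n\in 2^{-n}\IZ$, still with $q'_n\le\alpha$ and $q'_n\to\alpha$ (check $q'_n\le q'_{n+1}$ from $\lfloor 2x\rfloor\ge 2\lfloor x\rfloor$, and $q'_n\ge q_n-2^{-n}$). Let $A_n\subseteq\{0,\dots,n-1\}$ collect the positions of the $1$'s in the length-$n$ terminating binary expansion of $q'_n$, so $2^{-A_n}=q'_n$; then $(A_n)_n$ is a strong array with the containment property by construction and the monotonicity property since $2^{-A_n}=q'_n\le q'_{n+1}=2^{-A_{n+1}}$.

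The step that needs real work — and the place where non-dyadicity is used — is the pointwise convergence $A_n(i)\to A(i)$, i.e.\ that the binary digits of $q'_n$ converge to those of $\alpha$. For every $m$, non-dyadicity makes the tail $\sum_{j>m}\alpha_j2^{-j}$ strictly positive, so $\alpha$ lies in the \emph{open} interval $(0.\alpha_1\cdots\alpha_m,\ 0.\alpha_1\cdots\alpha_m+2^{-m})$; since $q'_n\to\alpha$ from below, eventually $q'_n$ lies in this interval too, which pins its first $m$ binary digits down to $\alpha_1\cdots\alpha_m$. As $m$ was arbitrary, for each fixed $i$ we get $A_n(i)=\alpha_{i+1}=A(i)$ for all large $n$. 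I expect this to be the crux, together with the underlying nuisance of the two binary representations of dyadic rationals: the naive ``round down and read off the bits'' recipe converges to the eventually-$1$ representation, which need not be the characteristic sequence of the \emph{given} set $A$ (for $A=\{2\}$ it would return $\{3,4,5,\dots\}$), which is exactly why the finite/cofinite case must be peeled off by hand and why this implication cannot be made uniform in the way the earlier propositions were.
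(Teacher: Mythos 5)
The paper cites this lemma to Soare and does not supply its own proof, so your argument can only be assessed on its own terms; it is correct.

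The $(2)\Rightarrow(1)$ direction is fine: reading off the rationals $2^{-A_n}$ from the strong array, monotonicity is immediate from property~(2) of the approximation, and the tail bound $|2^{-A_n}-2^{-A}|\le 2^{-m}$ once $A_n$ and $A$ agree below $m$ gives convergence.

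For $(1)\Rightarrow(2)$ the split into dyadic and non-dyadic $\alpha$ is the crucial observation, and you correctly identify why it is unavoidable: a left-approximation to a dyadic $\alpha$ gives no way to tell apart the finite set from the cofinite set, both of which represent $\alpha$, so these decidable cases must be handled separately. In the non-dyadic case the rounding $q'_n:=\lfloor 2^n q_n\rfloor/2^n$ and the open-interval argument are sound: non-dyadicity places $\alpha$ strictly between consecutive multiples of $2^{-m}$, so the approximants are eventually trapped in that open interval, which pins down their first $m$ bits to those of $A$. (Incidentally, the outer $\max$ in your definition of $q'_n$ is redundant, since $(q_m)_m$ is nondecreasing and hence so is $m\mapsto\lfloor 2^n q_m\rfloor$; it does no harm, though.)

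One small omission: besides $q_n<1$ you also need $q_n\ge 0$ for the bit-extraction defining $A_n$ to make sense, since some initial $q_n$, and hence $q'_n$, could be negative. This is easily repaired by replacing $q_n$ with $\max(q_n,0)$, which is still computable, nondecreasing, and converges to $\alpha>0$. With that normalization the argument is complete.
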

With this we are ready to establish the following characterization.
\begin{theorem}
\label{theorem:reg-approx-sets-some-approx}
Let $A\subseteq\IN$ be a set such that the number $2^{-A}$ is left-computable.
Then the following conditions are equivalent.
\begin{enumerate}
\item
The number $2^{-A}$ is regainingly approximable.
\item
There exists a uniformly computable approximation $(A_n)_n$ of~$A$ from the left 
such that, for infinitely many $n$, 
\[A\cap\{0,\ldots,n-1\} = A_n.\]
\item
For every uniformly computable approximation $(B_n)_n$ of~$A$ from the left 
there exists a computable, increasing function ${s\colon\IN\to\IN}$ such that,
for infinitely many $n$, 
\[A\cap\{0,\ldots,n-1\} = B_{s(n)}\cap\{0,\ldots,n-1\}.\]
\end{enumerate}
\end{theorem}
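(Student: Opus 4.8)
The plan is to prove the three conditions equivalent via a cycle $(3) \Rightarrow (2) \Rightarrow (1) \Rightarrow (3)$, translating back and forth between properties of the real number $2^{-A}$ and properties of the approximating sets. The key dictionary is that if $(A_n)_n$ is a uniformly computable approximation from the left of $A$, then setting $a_n := 2^{-A_n}$ gives a computable nondecreasing rational sequence converging to $2^{-A}$; conversely, a computable nondecreasing rational approximation of $2^{-A}$ can be transformed into a uniformly computable approximation from the left by Lemma~\ref{lemma:uniformlycomputableapproximationfromtheleft} and standard bookkeeping (truncating $a_n$ to its first $n$ binary digits, say). The numerical heart of the matter is the elementary observation that for a left-computable approximation, $A \cap \{0,\ldots,n-1\} = A_n$ holds exactly when $a_n = 2^{-A_n}$ has already "caught up" to $2^{-A}$ within $2^{-n}$ in a clean way — more precisely, $A_n = A \cap \{0,\ldots,n-1\}$ implies $2^{-A} - 2^{-A_n} = \sum_{a \in A,\, a \geq n} 2^{-(a+1)} \leq 2^{-n}$, which is almost what condition~(1) wants, up to the distinction between $<$ and $\leq$ that Proposition~\ref{prop:equivalent-conditions} lets us ignore.

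**The implication $(2) \Rightarrow (1)$.** Given $(A_n)_n$ with $A_n = A \cap \{0,\ldots,n-1\}$ for infinitely many $n$, set $a_n := 2^{-A_n}$. For each such good $n$ we have $2^{-A} - a_n = \sum_{a \in A,\, a \geq n} 2^{-(a+1)} \leq 2^{-n}$. To turn "$\leq$" into "$<$" I would either appeal directly to the remark following Proposition~\ref{prop:equivalent-conditions} that "$<$" and "$\leq$" are interchangeable, or shift indices: the sequence $a'_n := a_{n+1}$ satisfies $2^{-A} - a'_n \leq 2^{-(n+1)} < 2^{-n}$ for infinitely many $n$. Either way $2^{-A}$ is regainingly approximable.

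**The implication $(1) \Rightarrow (3)$.** This is where I expect the main work. Fix any uniformly computable approximation $(B_n)_n$ of $A$ from the left and put $b_n := 2^{-B_n}$. Since $2^{-A}$ is regainingly approximable, Proposition~\ref{prop:index-function} applied to the computable nondecreasing approximation $(b_n)_n$ gives a computable increasing $r$ with $2^{-A} - b_{r(n)} < 2^{-n}$ for infinitely many $n$; in fact, using Proposition~\ref{prop:equivalent-conditions}, I can even arrange $2^{-A} - b_{r(n)} < 2^{-(n+1)}$ for infinitely many $n$, which I will want for a strict inequality margin. The subtlety is that $2^{-A} - b_{r(n)}$ being small does not by itself force $B_{r(n)}$ and $A$ to agree on $\{0,\ldots,n-1\}$: the approximation $b_{r(n)}$ could be close to $2^{-A}$ numerically while $B_{r(n)}$ still differs from $A \cap \{0,\ldots,n-1\}$ by compensating errors (a spurious extra element below $n$ balanced by missing mass above). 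To fix this I would not take $s(n) = r(n)$ directly but instead define $s(n)$ to be a stage late enough that $B_{s(n)}$ has settled on $\{0,\ldots,n-1\}$ relative to what $b_{r(n)}$ guarantees. Concretely: because $b$ is nondecreasing and converges to $2^{-A}$, and because $2^{-A} - b_{r(n)} < 2^{-(n+1)}$ at a good $n$, any later stage $m \geq r(n)$ still has $2^{-A} - b_m < 2^{-(n+1)}$, hence $B_m$ and $A$ can differ on $\{0,\ldots,n-1\}$ only in a way contributing less than $2^{-(n+1)} < 2^{-n}$ of net positive mass — but since $b_m \leq 2^{-A}$, the set $B_m$ cannot contain any element $< n$ that $A$ omits whose "weight" $2^{-(a+1)} \geq 2^{-n}$ is not already compensated; a short case analysis on the least point of disagreement shows that in fact $B_m \cap \{0,\ldots,n-1\} \subseteq A \cap \{0,\ldots,n-1\}$, and the reverse inclusion follows once $m$ is also large enough that $B_m$ has enumerated all elements of $A$ below $n$, which happens at some computable stage. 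So I would set $s(n) := \max(r(n), t(n))$ where $t(n)$ is a computable modulus for "$B$ has captured $A \cap \{0,\ldots,n-1\}$ as much as the bound $2^{-(n+1)}$ permits" — the point being that one can compute, from $b_{r(n)}$ and the bound, how far to wait. Taking $s$ to be increasing (pass to a dominating increasing function if needed) completes this direction, and the infinitely many good $n$ survive the construction.

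**The implication $(3) \Rightarrow (2)$.** Apply~(3) to a concrete canonical uniformly computable approximation $(B_n)_n$ of $A$ from the left, which exists by Lemma~\ref{lemma:uniformlycomputableapproximationfromtheleft} since $2^{-A}$ is left-computable by hypothesis. Obtain the computable increasing $s$ with $A \cap \{0,\ldots,n-1\} = B_{s(n)} \cap \{0,\ldots,n-1\}$ for infinitely many $n$. Now define a new approximation $(A_n)_n$ by $A_n := B_{s(n)} \cap \{0,\ldots,n-1\}$; one checks this is a strong array, that $A_n \subseteq \{0,\ldots,n-1\}$, that $2^{-A_n}$ is nondecreasing (using monotonicity of $s$ and of the $B$-approximation, with a small argument that truncation preserves the order since the masses involved above level $n$ are dominated), and that it converges pointwise to $A$. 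For the infinitely many good $n$ we then have $A_n = A \cap \{0,\ldots,n-1\}$ by construction, giving~(2). The only mildly delicate point here is verifying that truncating $B_{s(n)}$ to $\{0,\ldots,n-1\}$ still yields a nondecreasing sequence of values; if it does not automatically, I would instead interleave, defining $A_n$ to be whichever of $B_{s(n)} \cap \{0,\ldots,n-1\}$ and the previously committed set has larger $2^{-(\cdot)}$ value, which is a routine fix that preserves all required properties and still matches $A \cap \{0,\ldots,n-1\}$ at the good stages since at those stages the truncation is already maximal.

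I expect the implication $(1) \Rightarrow (3)$ to be the genuine obstacle, precisely because of the gap between numerical closeness of $2^{-A}$-approximations and exact agreement of the approximating finite sets on an initial segment; everything else is bookkeeping of the kind already carried out for numbers in Propositions~\ref{prop:equivalent-conditions} and~\ref{prop:index-function}. The remark in the section introduction that "not all arguments here can be fully effectively uniform" suggests the authors' version may lose uniformity somewhere — plausibly in exactly this step, if pinning down the modulus $t(n)$ requires knowing something noncomputable about $A$ — so I would be alert to whether my computable-modulus claim actually goes through or must be weakened to a merely existential one.
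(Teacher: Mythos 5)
Your $(2) \Rightarrow (1)$ matches the paper, and your $(3) \Rightarrow (2)$ is also correct; in fact the worry you raised there is unfounded, since for finite sets $S$ and $T$ the order $2^{-S} \geq 2^{-T}$ is just lexicographic comparison of characteristic sequences, which is preserved under truncating both to $\{0,\ldots,k-1\}$, so no interleaving fix is needed.

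The genuine gap is in $(1) \Rightarrow (3)$, exactly where you flagged it. Your claim that $2^{-B_m} \leq 2^{-A}$ together with $2^{-A} - 2^{-B_m} < 2^{-(n+1)}$ forces $B_m \cap \{0,\ldots,n-1\} \subseteq A \cap \{0,\ldots,n-1\}$ via a ``short case analysis on the least point of disagreement'' is false. Take $n=2$, an $A$ with characteristic sequence beginning $1,0,0,0,\ldots$ (so $2^{-A}=1/2$) and $B_m=\{1,2,3\}$ (so $2^{-B_m}=7/16$): then $2^{-A}-2^{-B_m}=1/16<2^{-3}$ and $2^{-B_m}\leq 2^{-A}$, yet $B_m\cap\{0,1\}=\{1\}\not\subseteq\{0\}=A\cap\{0,1\}$. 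The compensating-error pattern you yourself worried about one sentence earlier can occur entirely below $n$: the first disagreement has $A$ ahead, a later one still below $n$ has $B_m$ ahead, and the net numerical gap is tiny. Moreover, your fallback that one can computably wait until ``$B_m$ has enumerated all elements of $A$ below $n$'' begs the question, since that stage depends on $A\cap\{0,\ldots,n-1\}$, which is not given. The paper circumvents both issues with a non-uniform dichotomy that your proposal lacks: either $A\cap\{0,\ldots,n-1\}=B_{r(n)}\cap\{0,\ldots,n-1\}$ already holds for infinitely many $n$ and one simply takes $s=r$; or else beyond some threshold $N$ the truncated approximation is provably at least $2^{-n}$ below $A$'s truncation for every $n\geq N$ (a short argument using coinfiniteness of $A$ rules out the opposite overshoot), and in that case $s(n)$ can be defined by the genuinely computable condition of waiting until $2^{-(B_k\cap\{0,\ldots,n-1\})}$ gains at least $2^{-n}$ over $2^{-(B_{r(n)}\cap\{0,\ldots,n-1\})}$. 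Your instinct that a non-uniformity lurks was right, but it enters through this initial case split rather than through a modulus for capturing $A$'s initial segment.
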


\begin{proof}
\,

\smallskip

$(3) \Rightarrow (2)$:
By Lemma~\ref{lemma:uniformlycomputableapproximationfromtheleft} some uniformly computable approximation $(B_n)_n$ of~$A$ from the left exists. Then by assumption a function~$s$ as in~(3) must exist as well. Thus, (2) follows immediately by letting 
$A_n:=B_{s(n)} \cap\{0,\ldots,n-1\}$ for all $n\in \IN$.

\smallskip

$(2) \Rightarrow (1)$:
Let $(A_n)_n$ be a uniformly computable approximation of~$A$
from the left 
such that ${A\cap\{0,\ldots,n-1\} = A_n}$ for infinitely many~$n$.
The sequence $(2^{-A_n})_n$ is a nondecreasing computable sequence of rational numbers converging to~$2^{-A}$.
For the infinitely many $n$ with ${A\cap\{0,\ldots,n-1\} = A_n}$
we have $2^{-A} - 2^{-A_n} \leq \sum_{k=n+1}^\infty 2^{-k} = 2^{-n}$.
Thus, $2^{-A}$ is regainingly approximable.

\smallskip

$(1) \Rightarrow (3)$:
If $A$ is cofinite then it is easy to see that for any computable approximation of~$A$ from the left there is a function $s$ as required by~(3); thus  w.l.o.g.\ let $A$ be coinfinite.

Let $(B_n)_n$ be an  arbitrary uniformly computable approximation of~$A$ from the left.
Then $(2^{-B_n})_n$ is a computable, nondecreasing sequence of rational numbers
converging to $2^{-A}$.
By Proposition~\ref{prop:index-function} there exists a computable increasing function
$r\colon\IN\to\IN$ such that, for infinitely many $n$, $2^{-A} - 2^{-B_{r(n)}} < 2^{-n}$.
If there are infinitely many $n$ with 
\[A\cap\{0,\ldots,n-1\} = B_{r(n)}\cap\{0,\ldots,n-1\}\]
then (3)~holds with $s:=r$. Otherwise there is an $N>0$ such that for all $n\geq N$ we have
$A\cap\{0,\ldots,n-1\} \neq B_{r(n)}\cap\{0,\ldots,n-1\}$; that is one of 
\begin{itemize}
	\item[(i)]	$2^{-A\cap\{0,\ldots,n-1\}} + 2^{-n} \leq 2^{-B_{r(n)}\cap\{0,\ldots,n-1\}}$
	\item[(ii)] $2^{-B_{r(n)}\cap\{0,\ldots,n-1\}} + 2^{-n} \leq 2^{-A\cap\{0,\ldots,n-1\}}$
\end{itemize}
must hold. But (i), together with $A$'s coinfiniteness, would imply
\[ 2^{-A} < 2^{-A\cap\{0,\ldots,n-1\}} + 2^{-n} \leq 2^{-B_{r(n)}\cap\{0,\ldots,n-1\}} 
   \leq 2^{-B_{r(n)}} \leq  2^{-A}, \]
a contradiction.
Thus, for $n\geq N$, (ii)~must hold.
Define $s\colon \IN\to\IN$ via
\[ s(n) := \begin{cases}
   n & \hspace{-0.2cm}\text{if } n< N , \\
   \min\left\{k\colon\, \parbox{6.7cm}{\centering $\phantom{\;\;\wedge} k > s(n-1) \;\;\wedge$ \\
   	$2^{-B_k \cap\{0,\ldots,n-1\}} \,{\geq}\,  2^{-B_{r(n)} \cap\{0,\ldots,n-1\}}{+}2^{-n}$}
   \right\} & \hspace{-0.2cm}\text{if } n\geq N ,
   \end{cases}
\]
and note that this is well-defined as $\lim_{n\to\infty} B_n(i)=A(i)$ for all $i\in\IN$. It is clear that $s$ is increasing and computable.
Fix any of the infinitely many $n\geq N$ with $2^{-A} - 2^{-B_{r(n)}} < 2^{-n}$.

We claim that $A\cap\{0,\ldots,n-1\} = B_{s(n)}\cap\{0,\ldots,n-1\}$. For the sake of a contradiction, assume otherwise; then one of 
\begin{itemize}
	\item[(iii)] $2^{-A\cap\{0,\ldots,n-1\}} + 2^{-n} \leq 2^{-B_{s(n)}\cap\{0,\ldots,n-1\}}$
	\item[(iv)] $2^{-B_{s(n)}\cap\{0,\ldots,n-1\}} + 2^{-n} \leq 2^{-A\cap\{0,\ldots,n-1\}}$
\end{itemize}
must hold. From~(iii) we can deduce
\[ 2^{-A} < 2^{-A\cap\{0,\ldots,n-1\}} + 2^{-n} \leq 2^{-B_{s(n)}\cap\{0,\ldots,n-1\}} \leq 2^{-B_{s(n)}} \leq 2^{-A}, \]
a contradiction; from~(iv) we obtain
\[\begin{array}{r@{\;}c@{\;}l}
   2^{-A} &<& 2^{-B_{r(n)}} + 2^{-n} 
     < 2^{-B_{r(n)}\cap\{0,\ldots,n-1\}} + 2^{-n} + 2^{-n} \\
     &\leq & 2^{-B_{s(n)}\cap\{0,\ldots,n-1\}} + 2^{-n}  
     \leq 2^{-A\cap\{0,\ldots,n-1\}}
     \leq  2^{-A} ,
\end{array}\]
another contradiction.
\end{proof}

For the remainder of this section we will focus on
c.e. sets.
\begin{defi}
We call a set $A\subseteq\IN$ \emph{regainingly approximable} if it is computably
enumerable and the real number $2^{-A}$ is regainingly approximable.
\end{defi}
That we resort to the number $2^{-A}$ in this definition concerning sets may seem unnatural and raises 
the question whether regainingly approximable
sets could alternatively be defined using enumerations of their elements. This is indeed possible, as we will show now.

\medskip

We call a total function $f\colon \IN\to\IN$ an \emph{enumeration} of a set $A\subseteq\IN$ if
\[ A = \{n\in\IN \colon (\exists k \in\IN)\; f(k)=n+1\} . \]
If $f(k)=n+1$ then we say that {\em  $f$ enumerates $n$ into $A$ at stage $k$}.
Note that here $f(k)=0$ encodes that  the function~$f$ does not enumerate anything into $A$ at stage $k$.
It is clear that a set $A\subseteq \IN$ is computably enumerable if and only if
there exists a computable enumeration of $A$.
If $f\colon\IN\to\IN$ is an enumeration of a subset of $\IN$ then, for $t\in\IN$, we write
\[ \mathrm{Enum}(f)[t] :=\{n \in\IN \colon (\exists k \in \IN)\; (k<t \text{ and } f(k)=n+1)\} . \]

\begin{defi}
	\label{definition:rgood}
	Let $r\colon\IN\to\IN$ be a nondecreasing, unbounded function.
	We call an enumeration $f\colon\IN\to\IN$ of a set $A\subseteq\IN$ 
	{\em $r$-good} if there exist infinitely many $n$ such that
	\[ \{0,\ldots,n-1\} \cap A \subseteq \mathrm{Enum}(f)[r(n)] . \]
\end{defi}

\begin{remark}
	\label{remark:without_repetitions}
	We call an enumeration of some~$A$ an \emph{enumeration without repetitions} if for every $n\in A$ there exists exactly one $k\in\IN$ with $f(k)=n+1$. From a given enumeration $f$ of~$A$ one can easily compute one without repetitions by defining, for all $k\in\IN$, 
	\[ \widetilde{f}(k):= \begin{cases}
		f(k) & \text{if } f(k)>0 \text{ and } (f(k)-1) \not\in \mathrm{Enum}(f)[k], \\
		0 & \text{otherwise}.
	\end{cases}
	\]
	Note that if $f$ was $r$-good for some nondecreasing, unbounded function~$r$ then $\widetilde{f}$ is $r$-good as well.
\end{remark}

\begin{example}
\label{ex:dec-regaining}
Let $A\subseteq\IN$ be a decidable set.
Then the function ${f\colon \IN\to\IN}$ defined by
$f(n):=n+1$ if $n\in A$, $f(n):=0$ if $n\not\in A$, is a computable and $\id_\IN$-good enumeration without repetitions of $A$.
\end{example}
\begin{theorem}
\label{theorem:regaining-sets}
For a c.e.~$A\subseteq\IN$ the following conditions are equivalent.
\begin{enumerate}
\item
The number $2^{-A}$ is regainingly approximable.
\item
There exists a computable $\id_\IN$-good enumeration of $A$.
\item
There exists a computable, nondecreasing, unbounded function $r\colon\IN\to\IN$
such that there exists a computable $r$-good enumeration of $A$.
\item
For every computable enumeration $f$ of $A$ there exists
a computable, increasing function $r\colon\IN\to\IN$
such that $f$ is $r$-good.
\end{enumerate}
\end{theorem}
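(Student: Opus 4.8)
The plan is to run the cycle of implications $(1)\Rightarrow(4)\Rightarrow(3)\Rightarrow(2)\Rightarrow(1)$. Of these, $(4)\Rightarrow(3)$ is immediate: a c.e.\ set has a computable enumeration, and an increasing function is in particular nondecreasing and unbounded. And $(2)\Rightarrow(1)$ is easy: if $f$ is a computable $\id_\IN$-good enumeration of $A$, then $c_n:=\sum_{a\in\mathrm{Enum}(f)[n]}2^{-(a+1)}$ is a computable nondecreasing sequence of rationals converging to $2^{-A}$ from below, and for each of the infinitely many $n$ with $\{0,\ldots,n-1\}\cap A\subseteq\mathrm{Enum}(f)[n]$ every not-yet-enumerated element of $A$ lies above $n$, so $2^{-A}-c_n\le 2^{-n}$; hence $2^{-A}$ is regainingly approximable.

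For $(1)\Rightarrow(4)$ I would invoke Theorem~\ref{theorem:reg-approx-sets-some-approx} (applicable since $2^{-A}$ is left-computable for c.e.\ $A$). Let $f$ be an arbitrary computable enumeration of $A$. The finite sets $B_n:=\mathrm{Enum}(f)[n]\cap\{0,\ldots,n-1\}$ satisfy $B_n\subseteq B_{n+1}\subseteq\{0,\ldots,n\}$ and $B_n(i)\to A(i)$, so $(B_n)_n$ is a uniformly computable approximation of $A$ from the left. By the implication $(1)\Rightarrow(3)$ of that theorem there is a computable increasing $s$ with $A\cap\{0,\ldots,n-1\}=B_{s(n)}\cap\{0,\ldots,n-1\}$ for infinitely many $n$; since $s(n)\ge n$ the right-hand side equals $\mathrm{Enum}(f)[s(n)]\cap\{0,\ldots,n-1\}$, so $A\cap\{0,\ldots,n-1\}\subseteq\mathrm{Enum}(f)[s(n)]$ for infinitely many $n$, i.e.\ $f$ is $s$-good. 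As $f$ was arbitrary, $(4)$ holds.

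The core of the proof is $(3)\Rightarrow(2)$. Let $f$ be a computable $r$-good enumeration of $A$ for some computable nondecreasing unbounded $r$; replacing $r$ by $n\mapsto n+\max_{k\le n}r(k)$ we may assume $r$ is increasing while keeping $f$ $r$-good, with some infinite witness set $N$. Consider the computable sequence of finite sets $P(n):=\mathrm{Enum}(f)[r(n)]\cap\{0,\ldots,n-1\}$. Since $r$ is increasing, $P(n)\subseteq P(n+1)$; since $\mathrm{Enum}(f)[\cdot]\subseteq A$, each $P(n)\subseteq A$; and since $P(n)=A\cap\{0,\ldots,n-1\}$ for the infinitely many $n\in N$, we have $\bigcup_nP(n)=A$. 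Thus $(P(n))_n$ is a computable, set-theoretically increasing exhaustion of $A$ with $P(0)=\emptyset$. Now enumerate the elements of $A$ as $a_0,a_1,a_2,\ldots$ in the order in which they first enter this sequence (within a single stage, by increasing magnitude); this is a computable listing, so $g(j):=a_j+1$ defines a computable enumeration of $A$ (pad with zeros if $A$ happens to be finite). Then $\mathrm{Enum}(g)[m]=\{a_0,\ldots,a_{m-1}\}$, while for $m\in N$ the set $P(m)=A\cap\{0,\ldots,m-1\}$ consists precisely of the first $|P(m)|\le m$ elements of the listing; hence $A\cap\{0,\ldots,m-1\}=P(m)=\{a_0,\ldots,a_{|P(m)|-1}\}\subseteq\{a_0,\ldots,a_{m-1}\}=\mathrm{Enum}(g)[m]$ for infinitely many $m$, so $g$ is $\id_\IN$-good.

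The step I expect to be the main obstacle is precisely $(3)\Rightarrow(2)$ — upgrading $r$-goodness to $\id_\IN$-goodness — and the crux is finding the right object to replay. Simply slowing down or reordering $f$ fails: $r$-goodness at $n$ only certifies the length-$n$ prefix of $A$ after $r(n)$, possibly $\gg n$, stages, so any enumeration that faithfully digests that certification ``owes'' a much longer prefix by the time it is done. The sequence $(P(n))_n$ circumvents this because it is genuinely set-increasing, it is exact on the infinite set $N$, and it obeys the size bound $|P(n)|\le n$, which is exactly what makes ``enumerate $A$ in the order of first appearance in $P$, one element per stage'' fast enough to have covered $P(m)$ already by stage $m$; note also that this needs no effective decision of whether $A$ is finite. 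The remaining implications are routine bookkeeping with Theorem~\ref{theorem:reg-approx-sets-some-approx} and the definitions.
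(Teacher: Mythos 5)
Your cycle $(1)\Rightarrow(4)\Rightarrow(3)\Rightarrow(2)\Rightarrow(1)$ is sound and, for the crucial step $(3)\Rightarrow(2)$, your construction is essentially identical to the paper's: setting $L_n:=\mathrm{Enum}(f)[r(n)]\cap\{0,\ldots,n-1\}$ (your $P(n)$) and re-enumerating $A$ in order of first appearance in $(L_n)_n$, breaking ties within a stage by magnitude, is exactly the paper's $M_n:=L_n\setminus L_{n-1}$ reordering. Your $(1)\Rightarrow(4)$ routes through Theorem~\ref{theorem:reg-approx-sets-some-approx} rather than directly through Proposition~\ref{prop:index-function} as the paper does; that is a slightly longer but valid detour.

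However, there is a genuine gap in your $(3)\Rightarrow(2)$, and it is precisely the point the paper flags in the remark following the theorem. You assert that ``$g(j):=a_j+1$ defines a computable enumeration of $A$ (pad with zeros if $A$ happens to be finite)'' and then claim ``this needs no effective decision of whether $A$ is finite.'' That last claim is false. If $A$ is finite, say $|A|=k$, then to compute $g(j)$ for $j\geq k$ you must ``pad with a zero,'' but the natural search for $a_j$ --- scan $n=0,1,2,\ldots$ until $|P(n)|>j$ --- never halts when $|A|\leq j$. To know when to start padding you must already know that $A$ is finite and how large it is, which is not computable from the given data. This non-uniformity is unavoidable: the paper's remark gives a short topological argument (using the fact that any $\id_\IN$-good enumeration of $\IN$ is nowhere zero, whereas the zero function is the unique and $\id_\IN$-good enumeration of $\emptyset$) showing that no continuous map can send every $2\id_\IN$-good enumeration to an $\id_\IN$-good enumeration of the same set. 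The fix is exactly what the paper does: handle the finite case separately, observing that a finite $A$ is decidable and hence has a trivially $\id_\IN$-good enumeration, and run your construction only under the assumption that $A$ is infinite, which guarantees the listing $a_0,a_1,\ldots$ is total.
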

In particular, in analogy to Proposition~\ref{prop:index-function}, if a set~$A$ is regainingly approximable
then this will be apparent no matter which of its effective enumerations we look at.
\begin{proof}[{Proof of Theorem~\ref{theorem:regaining-sets}}]
$(3) \Rightarrow (1)$:
Let $r\colon\IN\to\IN$ be a
computable, nondecreasing, unbounded function,
and let $f\colon\IN\to\IN$ be a computable $r$-good enumeration of $A$.
Then $(a_n)_n$ defined for all $n\in\IN$ via
\[ a_n:= 2^{-\mathrm{Enum}(f)[r(n)]}  \]
is a computable, nondecreasing sequence  of rational numbers converging to $2^{-A}$;
and since we have for infinitely many $n$ that
\[ \{0,\ldots,n-1\} \cap A \subseteq \mathrm{Enum}(f)[r(n)] \]
it follows that $2^{-A} - a_n \leq \sum_{k=n}^\infty 2^{-k-1} = 2^{-n}$.
Thus $2^{-A}$ is regainingly approximable.

\smallskip

$(1) \Rightarrow (4)$:
Let $A\subseteq \IN$ be a c.e.~set such that $2^{-A}$ is regainingly approximable,
and let $f\colon\IN\to\IN$ be an arbitrary computable enumeration of~$A$.
Then $(a_n)_n$ defined by
$a_n:=2^{-\mathrm{Enum}(f)[n]}$, for all $n\in\IN$, is a computable nondecreasing sequence
of rational numbers converging to~$2^{-A}$.
By Proposition~\ref{prop:index-function} there exists a 
computable, increasing function $r\colon\IN\to\IN$ such that, for 
infinitely many $n$, $2^{-A} - a_{r(n)} < 2^{-n}$.
We obtain
$\{0,\ldots,n-1\} \cap A \subseteq \mathrm{Enum}(f)[r(n)]$ for infinitely many $n$.
Hence, $f$~is $r$-good.

\smallskip

$(4) \Rightarrow (3)$:
Trivial, as every c.e.~set has a computable enumeration.

\smallskip

$(2) \Rightarrow (3)$:
Trivial.

\smallskip

$(3) \Rightarrow (2)$:
Assume that ${r\colon\IN\to\IN}$ is a computable, nondecreasing, unbounded function
and  that $f\colon\IN\to\IN$ is a computable $r$-good enumeration of $A$.
If $A$ is finite, then $f$ itself is trivially an $\id_\IN$-good enumeration of $A$; so assume that $A$ is infinite.
Let $L_0, M_0:=\emptyset$ and for $n \geq 1$ define
$L_n := \{0,\ldots,n-1\}\cap \mathrm{Enum}(f)[r(n)]$
as well as $M_n := L_n \setminus L_{n-1}$; that is, $M_1,\ldots,M_n$ forms a disjoint partition of $L_n$ for every $n$.

We let $g\colon\IN\to\IN$ be the enumeration that first enumerates all elements of $M_1$ in increasing order, then those of $M_2$ in increasing order, and so on. More formally speaking, $g\colon\IN\to\IN$ is defined as follows:
For every~$n\in\IN$, let $m_n$ be the cardinality of $M_n$,
let $k_0^{(n)},\ldots,k_{m_n-1}^{(n)}$ be its elements in increasing order and, for $t$
with $0\leq t \leq m_n-1$, define
\[ g\left( t + \sum_{j<n} m_j\right) := 1+k_t^{(n)} . \]
This is well-defined due to $A$'s infinity.
Clearly, $g$ is a computable enumeration (in fact, without repetitions) of $A$ with $g(n)\neq 0$ for all $n$.
We claim that it is in fact an $\id_\IN$-good enumeration of $A$.
To see this, fix any of the, by assumption, infinitely many $n$ with
\[ \{0,\ldots,n-1\} \cap A \subseteq \mathrm{Enum}(f)[r(n)]. \]
By construction, $g$ enumerates exactly the elements of 
\[L_{n-1} = M_0\cup \dots \cup M_{n-1}\]
during the first $\sum_{j<n} m_j$ stages and, by choice of $n$, all elements of  $\{0,\ldots,n-1\}\cap A$ that are not enumerated during these stages
must be in $M_n$. In fact, they are exactly all elements of $M_n$, and thus will be enumerated by $g$ in the immediately following stages, starting with stage $\sum_{j<n} m_j$. As there cannot be more than ${n-\sum_{j<n} m_j}$ such numbers,
this process will be completed before stage~$n$.
\end{proof}

\begin{remark}
Note that ``$(3) \Rightarrow (2)$'' is the only implication in Theorem~\ref{theorem:regaining-sets} whose proof is not fully uniformly effective; in its proof we non-uniformly distinguished whether $A$ is finite or not. A simple topological argument shows that this  is unavoidable; in fact, there does not even exist a continuous function~$F\colon\IN^\IN\to\IN^\IN$ mapping every $2\id_\IN$-good enumeration without repetitions
of an arbitrary $A \subseteq\IN$ to an $\id_\IN$-good enumeration of $A$. To see this, we use the following two facts:
\begin{itemize}
	\item[(i)] There exists only one enumeration of the empty set, namely the constant zero function $\mathbf{0}\colon\IN\to\IN$, which is an $\id_\IN$-good enumeration.
	\item[(ii)] For every function $f \colon \IN \to \IN$ that is an $\id_\IN$-good enumeration of $\IN$ we have $f(n) \neq 0$ for all $n \in \IN$.
\end{itemize}
Now assume, for the sake of a contradiction, that there is an~$F$ as described. By~(i), we must have $F(\mathbf{0})=\mathbf{0}$, and as $F$ was assumed to be continuous, there exists an~$n_0\in\IN$ with
$F(0^{n_0}\IN^\IN)\subseteq 0\IN^\IN$; that is, $F$ must map all inputs starting with sufficiently many $0$'s to a sequence starting with at least one~$0$. Define $g\colon\IN\to\IN$ as an enumeration of $\IN$ ``delayed by $n_0$ stages'', that is, let
\[ g(n):=\begin{cases}
     0 & \text{if } n<n_0, \\
     1+n-n_0 & \text{if } n\geq n_0,
     \end{cases}
\]
for $n\in\IN$. Then while $g$ is a $2\id_\IN$-good enumeration without repetitions of $\IN$, we have $F(g)\in 0\IN^\IN$, and no such enumeration can be an $\id_\IN$-good enumeration of $\IN$, contradiction.
\end{remark}

\section{Computability-Theoretic Properties}
\label{section:sets-computability}

We begin this section by observing an easy splitting theorem that we will need later. Next, we will show that there are c.e.~sets that are not regainingly approximable. Finally, we will show that every c.e.~Turing degree contains a regainingly approximable set.

\begin{theorem}
\label{theorem:splitting}
For any c.e.~set $C\subseteq\IN$ there exist two disjoint, regainingly approximable
sets $A,B\subseteq\IN$ with $C=A\cup B$.
\end{theorem}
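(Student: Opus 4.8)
The plan is to argue non-uniformly, by a case distinction on whether $C$ is high; the stronger, uniform statement in Section~\ref{section:SplittingRegularReals} will later remove this non-uniformity at the cost of a more elaborate construction. \emph{If $C$ is not high}, then \emph{any} partition of $C$ into two c.e.\ sets works. Take for instance $A := C\cap 2\IN$ and $B := C\cap(2\IN+1)$. These are c.e.\ and satisfy $A,B\leq_T C$; since a set Turing below a non-high set is itself non-high (if $X\leq_T C$ then $X'\leq_T C'$, so $\emptyset''\leq_T X'$ would force $\emptyset''\leq_T C'$), neither $A$ nor $B$ is high. As $A$ is c.e., $2^{-A}$ is left-computable; and because the terms of $\sum_{a\in A}2^{-(a+1)}$ occupy pairwise distinct binary positions there is no carrying, so $2^{-A}$, read as an infinite binary sequence in the sense of Section~\ref{dfnasdnsdfdfg}, is just the characteristic sequence of $A$ (the case of finite $A$ being trivial, $2^{-A}$ then being rational) and hence Turing equivalent to it, so $2^{-A}$ is not high. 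Likewise for $B$. By Example~\ref{example:non-high_numbers}, $2^{-A}$ and $2^{-B}$ are regainingly approximable, so $A$ and $B$ are as required.

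So it remains to treat the case where $C$ \emph{is} high, where a genuine construction is needed. If $C$ is finite it is decidable and we simply split it into two decidable sets, regainingly approximable by Example~\ref{ex:dec-regaining}; so assume $C$ infinite and fix a computable one-element-at-a-time enumeration of it. We build $A$ and $B$ as c.e.\ sets by deciding, for each newly discovered element of $C$, whether to route it into $A$ or into $B$, the aim being that by Theorem~\ref{theorem:regaining-sets} both $A$ and $B$ admit computable $\id_\IN$-good enumerations $g_A,g_B$. For each $k$ we impose requirements $R^A_k$ (``some $n\geq k$ satisfies $A\cap\{0,\dots,n-1\}\subseteq\mathrm{Enum}(g_A)[n]$'') and $R^B_k$ (symmetrically), ordered $R^A_0,R^B_0,R^A_1,R^B_1,\dots$. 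When $R^A_k$ gets to act it chooses a large fresh value $n$, declares it a catch-up target, and from then on routes every newly discovered element below $n$ into $B$, so that $A\cap\{0,\dots,n-1\}$ stops growing; once the finitely many elements below $n$ already placed into $A$ have been output by $g_A$, the target is certified. A requirement is injured, and afterwards re-acts with a still larger value, whenever a higher-priority requirement of the opposite type acts or its routing commitment forces a small number the ``wrong'' way; numbers under no current commitment are placed into $A$ by default, so every element of $C$ is eventually placed and $C = A\sqcup B$.

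The hard part is that the $A$- and $B$-requirements genuinely conflict: a catch-up for $A$ at value $n$ needs every number below $n$ kept out of $A$ from some stage on, while a catch-up for $B$ at a larger value $n'$ needs those same numbers kept out of $B$ from some stage on. This is reconciled by the priority ordering together with the finiteness of $\{0,\dots,n-1\}$: once all requirements above $R^A_k$ have stopped acting — which, by the usual finite-injury counting, happens after finitely many stages — and the finitely many numbers below their thresholds have all entered $C$, the next value $R^A_k$ chooses is no longer disturbed and $R^A_k$ is met permanently; symmetrically for each $R^B_k$. Highness of $C$ is exactly what rules out a one-shot construction here: no computable function bounds the stage by which $C$ has settled below a given value, so $R^A_k$ must keep re-acting until it has outrun that finite but non-computably large settling behaviour, which is the role of the injury mechanism. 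With all requirements met, Theorem~\ref{theorem:regaining-sets} yields that $A$ and $B$ are regainingly approximable, and by construction they are disjoint with union $C$.
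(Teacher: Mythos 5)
Your proposal is correct in outline but takes a genuinely different route from the paper's, so let me compare the two. The paper's proof is a one-liner: apply Sacks' Splitting Theorem to $C$ to obtain disjoint c.e.\ \emph{low} sets $A,B$ with $A\cup B = C$; low sets are not high, so by Example~\ref{example:non-high_numbers} and Theorem~\ref{theorem:regaining-sets} both are regainingly approximable. You instead split by cases on the highness of $C$. Your first case is a nice, complete, and slicker-than-expected observation: for non-high $C$, \emph{any} c.e.\ split of $C$ works, because anything Turing below a non-high set is non-high, and (by the conventions of Section~\ref{dfnasdnsdfdfg}) $2^{-A}$ is Turing equivalent to $A$, so Example~\ref{example:non-high_numbers} applies directly. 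This correctly isolates the observation that the theorem is trivial unless $C$ is high, which the paper's proof hides inside Sacks' theorem (which guarantees non-highness of both halves even when $C$ is high). Your second case then has to re-do, from scratch, a finite-injury construction that is morally a weak version of what Sacks' Splitting already gives you.

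Concerning that second case: the strategy is in the right spirit (it is close in spirit to what the paper does uniformly and without any case split in Section~\ref{section:SplittingRegularReals}, Lemma~\ref{lemma:splitting}, via movable markers), but as written it is a sketch rather than a proof. Several load-bearing points are glossed over. First, the exact routing rule when several active requirements with nested thresholds disagree is not pinned down; you need to say explicitly that the highest-priority active requirement whose threshold exceeds the incoming element decides, and that its decision is recorded. Second, the injury accounting is only gestured at: you need the standard induction that a requirement of index $k$ is injured only when a strictly higher-priority requirement acts (finitely often by induction) or when an element of $C$ below the current maximum of the higher-priority thresholds appears late, which can happen only finitely often because that set is finite. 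Third, and most importantly, the certification condition $A\cap\{0,\dots,n-1\}\subseteq\mathrm{Enum}(g_A)[n]$ is a statement about stages $<n$ only, and it is ruined precisely when some $m<n$ enters $A$ at a stage $\geq n$; you need to argue that after $R^A_k$'s last injury the chosen threshold $n$ is large enough that no element below the (now stable) higher-priority thresholds is still missing from $C$, so that from then on everything new below $n$ is actually routed into $B$. Once these details are filled in, your case-2 argument works; but given that the paper anyway develops the fully uniform splitting algorithm in Lemma~\ref{lemma:splitting}, the Sacks route is the more economical choice for this particular theorem, while your case 1 is a genuinely useful simplification to keep in mind.
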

\begin{proof}
By Sacks' Splitting Theorem~\cite{Sac1963} (see, for instance, Soare~\cite[Theorem VII.3.2]{Soa1987})
there exist two disjoint, c.e., low subsets $A,B\subseteq\IN$ with $C=A\cup B$.
Low sets are not high. Hence, by Example~\ref{example:non-high_numbers}
and Theorem~\ref{theorem:regaining-sets},
$A$ and $B$ are regainingly approximable.
\end{proof}
The theorem leaves open whether the splitting 
can be done effectively. The answer is yes: in Section~\ref{section:SplittingRegularReals} we will present a uniformly effective algorithm that even works for a larger class of objects than just the c.e.\ sets.

\smallskip

As corollaries to Theorem~\ref{theorem:splitting} we obtain two easy ways to see that the regainingly approximable sets are a strict superclass of the decidable ones.
\begin{cor}
\label{cor:regaining-not-decidable}
There exists a regainingly approximable set $A\subseteq\IN$ that is not decidable.
\end{cor}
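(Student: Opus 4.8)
The plan is to derive this immediately from Theorem~\ref{theorem:splitting}, which provides a non-effective splitting. First I would take any c.e.\ set $C\subseteq\IN$ that is \emph{not} decidable (for instance, the halting problem, or any c.e.\ set of nonzero Turing degree). By Theorem~\ref{theorem:splitting} there exist two disjoint regainingly approximable sets $A,B\subseteq\IN$ with $C=A\cup B$. Since $C$ is not decidable, at least one of $A$ and $B$ must fail to be decidable: otherwise $C=A\cup B$ would be a union of two decidable sets and hence decidable itself, contradicting the choice of $C$. That set is the desired example.

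There is essentially no obstacle here; the only thing to check is that a non-decidable c.e.\ set exists at all, which is classical (the halting set suffices). One minor point worth making explicit in the write-up is why a union of two decidable sets is decidable: membership in $A\cup B$ is decided by deciding membership in $A$ and in $B$ and taking the disjunction, so decidability of both $A$ and $B$ would force decidability of $C$. Conversely one could note that either of the two pieces of the Sacks split of the halting problem works, since a low non-decidable c.e.\ set is in particular regainingly approximable by the argument in Example~\ref{example:non-high_numbers} together with Theorem~\ref{theorem:regaining-sets}; but invoking Theorem~\ref{theorem:splitting} as a black box is cleaner.

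I would phrase the proof in two or three sentences. The main "work," such as it is, is just the pigeonhole observation that splitting a non-decidable set into two pieces cannot leave both pieces decidable, so no real difficulty arises.

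\begin{proof}
Let $C\subseteq\IN$ be any c.e.\ set that is not decidable, for instance the halting problem. By Theorem~\ref{theorem:splitting} there exist two disjoint regainingly approximable sets $A,B\subseteq\IN$ with $C=A\cup B$. If both $A$ and $B$ were decidable, then so would be $C=A\cup B$, contradicting the choice of $C$. Hence at least one of $A$ and $B$ is a regainingly approximable set that is not decidable.
\end{proof}
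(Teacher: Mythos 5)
Your proof is correct and is essentially identical to the paper's second proof of this corollary: split an undecidable c.e.\ set via Theorem~\ref{theorem:splitting} and observe that at least one piece must be undecidable. (The paper also notes a one-line alternative proof via Example~\ref{example:non-high_numbers}, which you yourself mention as an aside.)
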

\begin{proof}[First proof]
By Example~\ref{example:non-high_numbers}, any c.e.~set that is neither decidable nor high is regainingly approximable.
\end{proof}
\begin{proof}[Second proof]
Let $C\subseteq\IN$ be an undecidable c.e.~set.
By Theorem~\ref{theorem:splitting} there are two disjoint regainingly approximable sets $A,B$ such that ${C=A\cup B}$. At least one of $A$ or $B$ must be undecidable.
\end{proof}

Next, we separate regaining approximability from computable enumerability.
\begin{theorem}
\label{theorem:ce-not-regaining}
There exists a c.e.~set $A\subseteq\IN$ that is not regainingly approximable.
\end{theorem}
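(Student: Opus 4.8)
The plan is to build a c.e.\ set $A$ by a finite-injury-style construction that diagonalizes against all potential witnesses that $2^{-A}$ is regainingly approximable. By Theorem~\ref{theorem:regaining-sets}, $A$ is regainingly approximable if and only if some computable enumeration of $A$ is $\id_\IN$-good, which (again by that theorem) is equivalent to: for \emph{every} computable enumeration $f$ of $A$ there is a computable increasing $r$ with $f$ being $r$-good. So it suffices to fix one particular computable enumeration of the $A$ we construct and defeat every computable increasing function $r_e$ (taken from an effective list of all partial computable functions, with the usual convention that we only act when the relevant values have converged and are increasing on the relevant initial segment): we want, for each $e$, that $f$ is \emph{not} $r_e$-good, i.e.\ that for all but finitely many $n$ we have $\{0,\dots,n-1\}\cap A \not\subseteq \mathrm{Enum}(f)[r_e(n)]$.

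Concretely I would set up requirements
\[
 R_e:\quad (\exists^\infty n)\ \{0,\dots,n-1\}\cap A \not\subseteq \mathrm{Enum}(f)[r_e(n)],
\]
where $f$ is the canonical enumeration of $A$ induced by the construction (enumerating an element at the stage it is put into $A$). Wait — a single $R_e$ only needs to be met for \emph{cofinitely many} $n$ to kill $r_e$-goodness of $f$; but since $f$ being $r_e$-good only requires infinitely many good $n$, to defeat it I must ensure only finitely many good $n$. That is a stronger obligation, so instead I will meet the weaker-looking but sufficient family: for each $e$ and each threshold $t$, ensure there is some $n\ge t$ that is \emph{bad} for $r_e$; the conjunction over all $t$ gives cofinitely many — no, it gives infinitely many bad $n$, which is still not enough. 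The right move: note $r_e$-goodness means infinitely many \emph{good} $n$; to refute it I need cofinitely many \emph{bad} $n$. So I attack each $r_e$ with a single, eventually-permanent strategy: pick a fresh large follower $x$, wait for a stage $s$ and some candidate $n$ with $r_e(n)\!\downarrow$ and $x<n$ and $x\notin\mathrm{Enum}(f)[r_e(n)]$; if at that stage $x$ has not yet been enumerated, enumerate $x$ into $A$ \emph{after} stage $r_e(n)$ in $f$'s enumeration, so that $x\in\{0,\dots,n-1\}\cap A$ but $x\notin\mathrm{Enum}(f)[r_e(n)]$, making $n$ bad; and keep a fresh reserve of followers so that for every sufficiently large $n$ there is such a late-enumerated $x<n$. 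This is exactly a coinfinite-permitting argument: I reserve an infinite computable set of "slots," and for each $r_e$ I guarantee the slot opened for $n$ is enumerated only after $f$-stage $r_e(n)$.

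In more detail: I would interleave, for each $e$, the construction of an infinite c.e.\ "delay gadget" $D_e\subseteq A$ whose elements are enumerated by $f$ with a deliberate delay calibrated to $r_e$. Specifically, put the $n$-th element $d_{e,n}$ of $D_e$ into $A$ (hence into $f$'s enumeration) only at a stage $> r_e(n+1)$, provided $r_e(n+1)$ has converged by then; because $r_e$ is increasing and total on the relevant inputs (if it is not, $r_e$ is not a legal witness and we win vacuously), for cofinitely many $n$ the value $d_{e,n}<n$ will already be an element of $A$ but will not yet be in $\mathrm{Enum}(f)[r_e(n)]$ — unless some \emph{other} requirement's element pushes it in earlier. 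To control this I use the standard priority bookkeeping: slots for $R_e$ live in the residue class $e+1 \bmod$ (growing modulus) or simply in a fixed computable partition $\IN = \bigsqcup_e S_e$ into infinite computable pieces, and only $R_e$ ever enumerates elements of $S_e$, so there is no conflict at all — this is genuinely injury-free. Then $f$'s delayed enumeration of $S_e$ alone witnesses that $r_e$ fails, for every $e$, and $A=\bigcup_e S_e$ is c.e.\ (indeed, we can arrange $A=\IN$ up to a coinfinite adjustment, but we must keep $A$ coinfinite so that $2^{-A}\ne 1$; easiest is to let each $S_e$ omit, say, the even numbers, so $A$ is the set of odds and still coinfinite — details of keeping $A$ coinfinite while leaving enough room for all the delays are routine).

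The main obstacle is getting the quantifiers exactly right, i.e.\ converting "infinitely many bad $n$" into "the enumeration is not $r_e$-good." Since $r_e$-goodness only asks for infinitely many \emph{good} $n$, the construction must ensure that from some point on \emph{every} $n$ is bad for $r_e$ — equivalently, that for every large $n$ there is some $x_n < n$ with $x_n\in A$ but $x_n\notin\mathrm{Enum}(f)[r_e(n)]$. This is why I need, not a single follower per $e$, but an infinite reservoir $S_e$ densely distributed below every threshold, each element enumerated by $f$ with delay exceeding $r_e$ at the appropriate input; once that density-plus-delay invariant is maintained, badness of all large $n$ is immediate and the proof goes through. The only subtlety is that the delay stage "$> r_e(n+1)$" must be achievable, which it is because $r_e$ is computable so its convergence can simply be waited for; if it never converges on some input, $r_e$ is partial, not a legal total increasing witness, and $R_e$ is met trivially. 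Packaging this, together with the reduction via Theorem~\ref{theorem:regaining-sets} that lets us argue about a single canonical enumeration $f$, yields the theorem.
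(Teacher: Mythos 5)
Your high-level plan matches the paper's: reduce via Theorem~\ref{theorem:regaining-sets} to building a single computable enumeration $g$ of a c.e.\ set $A$ that is not $r$-good for any total increasing computable $r$, and attack each candidate $\varphi_e$ on its own dedicated computable column of $\IN$ (the paper uses the slots $\langle e, k\rangle$, $k\in\IN$, and delays the enumeration of $\langle e, k\rangle$ until some stage $t \geq \varphi_e(\langle e, k+1\rangle)$). The conceptual outline, including the observation that there is no conflict between columns so the construction is injury-free, is sound and essentially identical to the paper's argument. You also correctly state the invariant that needs to hold: for every large $n$ there must be some $x_n < n$ with $x_n \in A$ but $x_n \notin \mathrm{Enum}(f)[r_e(n)]$.

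There is, however, a concrete gap in precisely the spot you flag as ``the main obstacle,'' namely the delay formula. You propose to enumerate the $n$-th element $d_{e,n}$ of $D_e$ only at a stage $> r_e(n+1)$, and you assert that ``for cofinitely many $n$ the value $d_{e,n} < n$'' witnesses failure at threshold $n$. But if $D_e$ is infinite and listed in increasing order $d_{e,0} < d_{e,1} < \cdots$, then $d_{e,n} \geq n$ for every $n$, so $d_{e,n} < n$ never holds. More fundamentally, once the $D_e$'s are pairwise disjoint and infinite, the gaps between consecutive elements of a given $D_e$ grow without bound, and the slot $d_{e,k}$ must serve as the witness for every threshold $m$ in the whole interval $(d_{e,k}, d_{e,k+1}]$. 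To make all of these $m$ bad you need $d_{e,k}$ to still be missing from $\mathrm{Enum}(f)[r_e(m)]$ for the \emph{largest} such $m$, i.e.\ the delay must be at least $r_e(d_{e,k+1})$ rather than $r_e(k+1)$. Since $d_{e,k+1}$ is typically much larger than $k+1$ and $r_e$ is increasing, this is a genuinely stronger delay. This is exactly what the paper's construction does, waiting until a stage $t \geq \varphi_e(\langle e, k+1\rangle)$ before enumerating $\langle e, k\rangle$. With that correction (replace $r_e(n+1)$ by $r_e(d_{e,n+1})$, and drop the false claim $d_{e,n}<n$ in favor of the observation that the largest element of $D_e$ below the threshold serves as the witness), your argument goes through and reproduces the paper's proof; as written, the delay is too short and the stated invariant is not actually maintained.
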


\begin{proof}
Let $\varphi_0,\varphi_1,\varphi_2,\ldots$ be a standard enumeration of all possibly partial computable functions with domain and range in $\IN$.
As usual, we write $\varphi_e(n)[t]{\downarrow}$ to express that the $e$-th Turing machine
(which computes $\varphi_e$)
stops after at most~$t$~steps on input $n$.

We shall construct a computable enumeration $g\colon\IN\to\IN$ of a set $A \subseteq\IN$
such that the following
requirements $(\mathcal{R}_{e})$ will be satisfied for all $e\in\IN$:
\[\begin{array}{r@{\;}l}
(\mathcal{R}_{e})\colon & \text{if } \varphi_e \text{ is total and increasing then } \\
  & (\exists n_e\in\IN) (\forall n> n_e)
   (\{0,\ldots,n-1\} \cap A \not\subseteq \mathrm{Enum}(g)[\varphi_e(n)]) .
\end{array}\]
According to Theorem~\ref{theorem:regaining-sets} this is sufficient.

\smallskip

We construct $g$ in stages; in stage $t$ we proceed as follows: 
Define $e:=\pi_1(\pi_1(t))$ and $k:=\pi_2(\pi_1(t))$, hence,
$\langle e,k \rangle = \pi_1(t)$. Check whether the following conditions are satisfied:
\begin{eqnarray*}
&& (\forall n\leq \langle e,k+1\rangle) \ \varphi_e(n)[t]{\downarrow} \\
&\text{and}& (\forall n < \langle e,k+1\rangle) \ \varphi_e(n) < \varphi_e(n+1) \\
&\text{and}& t \geq \varphi_e(\langle e,k+1\rangle ) \\
&\text{and}& \langle e,k \rangle \not\in \mathrm{Enum}(g)[t] .
\end{eqnarray*}
If they are, set $g(t):=1+ \langle e,k \rangle$, otherwise
$g(t):=0$.

\smallskip

This completes the construction; we proceed with the verification.
It is clear that $g$ is computable and an enumeration without repetitions of some c.e.\ set $A \subseteq \IN$.
We wish to show that $\mathcal{R}_e$ is satisfied for all $e\in\IN$.
Consider an~$e\in\IN$ such that $\varphi_e$ is total and increasing, as well as a number~${n> \langle e,0 \rangle}$.
There exists a unique $k\in\IN$ with ${\langle e,k \rangle < n \leq \langle e,k+1\rangle}$.
The function~$g$ enumerates $\langle e,k\rangle$ into $A$ at some uniquely determined stage $t$,
that is, there exists exactly one $t \in \IN$ with ${g(t)=1+\langle e,k\rangle}$.
Then 
\[\langle e,k \rangle \in \mathrm{Enum}(g)[t+1] \setminus \mathrm{Enum}(g)[t].\]
Since $n \leq \langle e,k+1\rangle$, we have
$\varphi_e(n) \leq \varphi_e( \langle e,k+1 \rangle ) \leq t$, and therefore
\[  \langle e,k \rangle \not\in\mathrm{Enum}(g)[t] \supseteq \mathrm{Enum}(g)[\varphi_e(n)] .
\]
Thus $\langle e,k \rangle \in \{0,\ldots,n-1\}\cap A$ witnesses that  $\mathcal{R}_e$ is satisfied with $n_e=\langle e,0\rangle$.
\end{proof}

Finally, we show that the regainingly approximable Turing degrees are exactly the c.e.\ Turing degrees.
\begin{theorem}
For every computably enumerable set $A \subseteq \IN$ there exists a regainingly approximable set $B \subseteq \IN$ with $A \equiv_T B$.
\end{theorem}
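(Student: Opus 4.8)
The plan is to take an arbitrary c.e.\ set $A$ and build a regainingly approximable $B$ with $B\equiv_T A$ by coding $A$ into $B$ in a sufficiently ``spread out'' way. The idea is standard in spirit: reserve a sparse sequence of coding locations $\ell_0 < \ell_1 < \ell_2 < \cdots$ (say $\ell_i$ growing fast, e.g.\ $\ell_i := 2^{2^i}$ or even faster, chosen computably) and put $\ell_i$ into $B$ exactly when $i$ enters $A$. To make $B$ Turing-equivalent to $A$ this is already enough for $A\leq_T B$ (decode $A(i)$ by asking $B(\ell_i)$) and for $B\leq_T A$ we also throw into $B$ all non-coding locations whose membership is decided once we know $A$ on a suitable finite initial segment; more simply, we let $B$ consist \emph{only} of coding locations, so $B\leq_T A$ is immediate as well, and $B$ is c.e.\ because $A$ is. The nontrivial point is to arrange that $B$ (equivalently $2^{-B}$) is regainingly approximable.

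For that I would use the enumeration characterisation from Theorem~\ref{theorem:regaining-sets}: it suffices to exhibit a computable $\id_\IN$-good enumeration of $B$, i.e.\ a computable enumeration $g$ of $B$ such that for infinitely many $n$ one has $\{0,\dots,n-1\}\cap B\subseteq \mathrm{Enum}(g)[n]$. Fix a computable enumeration $(A_t)_t$ of $A$ and let $g$ be the induced enumeration of $B$: whenever $i$ enters $A$ at stage $t$, have $g$ enumerate $\ell_i$ at stage $t$ (padding with $0$'s at stages where nothing enters). The key observation is that the coding locations are very sparse: between $\ell_{i}$ and $\ell_{i+1}$ there are no elements of $B$ at all, so for every index $n$ with $\ell_i < n \le \ell_{i+1}$ we have $\{0,\dots,n-1\}\cap B = \{\ell_0,\dots,\ell_i\}\cap B$, a set that depends only on $A\restriction (i+1)$. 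Now choose the growth rate of $(\ell_i)_i$ fast enough that $\ell_{i+1}$ exceeds the stage by which $A\restriction (i+1)$ has stabilised in the enumeration $(A_t)_t$ — but that stage is not computable, so instead I would make $g$ \emph{delay} its enumeration: when $i$ enters $A$ at stage $t$, let $g$ enumerate $\ell_i$ at stage $\max(t,h(i))$ for a suitable fast-growing computable $h$ with $h(i) < \ell_{i+1}$, where we also speed up $(\ell_i)$ if necessary so that there is room. Then, taking $n := \ell_{i+1}$ for each $i$, all of $\{\ell_0,\dots,\ell_i\}\cap B$ has been enumerated by stage $h(i) < \ell_{i+1} = n$, so $\{0,\dots,n-1\}\cap B\subseteq \mathrm{Enum}(g)[n]$; since this happens for every $i$, $g$ is $\id_\IN$-good and $B$ is regainingly approximable.

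The main obstacle, and the place needing care, is exactly this interaction between the (noncomputable) stage at which each bit of $A$ settles and the requirement that the good enumeration ``catch up'' by a computable deadline. The delaying trick resolves it: we never need to know when $A\restriction(i+1)$ is final, we only need that \emph{if} $i$ ever enters $A$ then its coding element is enumerated into $B$ by stage $h(i)$, and $h(i)$ can be an arbitrary computable function since we are free to choose $\ell_{i+1}$ above it. One then checks the bookkeeping: $g$ is total and computable (at stage $s$, enumerate $\ell_i$ iff $i\in A_s$ and $s=\max(t_i,h(i))$ where $t_i$ is the entry stage, which is decidable at stage $s$), $\mathrm{Enum}(g)=B$, and the two reductions $A\leq_T B$, $B\leq_T A$ go through as above. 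A final routine remark handles the degenerate case where $A$ (hence $B$) is finite, which is trivially regainingly approximable by Example~\ref{ex:dec-regaining} and Theorem~\ref{theorem:regaining-sets}, and where $A\equiv_T B$ is immediate.
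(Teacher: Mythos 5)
Your construction has a genuine gap exactly at the point you flagged as needing care, and the ``delay trick'' does not resolve it. You set $B=\{\ell_i \colon i\in A\}$ for a fixed computable sequence $(\ell_i)_i$ and then want the induced enumeration $g$ (enumerating $\ell_i$ at stage $\max(t_i,h(i))$, where $t_i$ is the stage at which $i$ enters $A$) to be $\id_\IN$-good. Your key sentence ``we only need that if $i$ ever enters $A$ then its coding element is enumerated into $B$ by stage $h(i)$'' is false: when $t_i > h(i)$, the maximum is $t_i$, so $\ell_i$ is enumerated at stage $t_i$, which may be far beyond both $h(i)$ and $\ell_{i+1}$. Delaying can only push early enumerations later; it cannot make late enumerations arrive sooner, because you cannot put $\ell_i$ into $B$ before you know that $i\in A$. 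Consequently, for $n=\ell_{i+1}$ the condition $\{0,\dots,n-1\}\cap B\subseteq\mathrm{Enum}(g)[n]$ requires that all $j\le i$ in $A$ have entered $A$ by stage $\ell_{i+1}$, i.e.\ that the settling time of $A\restriction(i+1)$ be bounded by the fixed computable function $i\mapsto\ell_{i+1}$ infinitely often.

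This fails in an essential way, not just for your particular $g$. If $A$ is the c.e.\ set of Theorem~\ref{theorem:ce-not-regaining} that is not regainingly approximable, then $B=\{\ell_i\colon i\in A\}$ is also not regainingly approximable: given any computable enumeration $f$ of $B$, the pullback $f'$ defined by $f'(t):=i+1$ when $f(t)=\ell_i+1$ (and $f'(t):=0$ otherwise) is a computable enumeration of $A$, and if $f$ were $\id_\IN$-good, then $f'$ would be $r$-good for the computable function $r(i):=\ell_{i+1}$, contradicting Theorem~\ref{theorem:regaining-sets}~(4). So any fixed computable ``spread-out'' coding of $A$ preserves the failure of regaining approximability; the sparseness of $(\ell_i)_i$ buys nothing. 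The paper's proof sidesteps exactly this obstacle by making the coding locations themselves dynamic: it builds an auxiliary nondecreasing sequence of thresholds $s_i[t]$ that are pushed upward whenever a small number enters $A$ at a late stage, so that the element coding $n$ ends up at position $s_n[t]\ge t$ and in particular never appears below a threshold after that threshold has stabilised; this gives an $\id_\IN$-good enumeration of $B$ by construction, while the (noncomputable, $A$-equivalent and $B$-equivalent) limit sequence $(S_i)_i$ of thresholds mediates the Turing equivalence $A\equiv_T B$. The essential idea your proposal is missing is precisely this adaptive re-placement of coding locations; a fixed computable map $i\mapsto\ell_i$ cannot work for high-enough $A$.
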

\begin{proof}
W.l.o.g.\ we may assume that $A$ is infinite. Thus fix some computable injective function $f \colon \IN \to \IN$ with $f(\IN) = A$. 

\medskip

We will build a c.e.\ $B$ by defining a computable injective function ${g\colon\IN\to\IN}$ and letting $B:=g(\IN)$. In parallel, we will define an increasing sequence $(S_i)_i$ such that
\begin{itemize}
\item[(i)]
for all $i$ we have $\{0,\ldots,S_i-1\} \cap B\subseteq \{g(0),\ldots,g(S_i-1)\}$, which 
implies that $B$ is regainingly approximable, and such that
\item[(ii)]
$A \equiv_T (S_i)_i \equiv_T B$.
\end{itemize}
To ensure these properties, we will define $(S_i)_i$ in such a way
that, for all~$i$ and for all $t\geq S_i$, we have
on the one hand that $f(t)\geq i$ and on the other hand that $g(t)\geq S_i$. This last property will imply~(i); and concerning~(ii), informally speaking, our choice of $(S_i)_i$ means that $f$~and $g$~can enumerate ``small'' numbers only for arguments smaller than $S_i$. This will allow us to show~$A \leq_T (S_i)_i$ and~$B \leq_T (S_i)_i$. Finally, the statements $(S_i)_i \leq_T A$ and $(S_i)_i \leq_T B$ will follow from the way we define $(S_i)_i$ alongside $g$. 

\medskip

After these informal remarks, we proceed with the full proof.
The following algorithm works recursively in infinitely many stages to compute two functions $g \colon \IN \to \IN$ and ${s\colon\IN^2\to\IN}$; we write $s_i[t]$ for~$s(i,t)$. Since it will turn out below that $(s_i[t])_t$ is eventually constant for every~$i$, allowing us to define $S_i:=\lim_{t\to\infty} s_i[t]$, it is suggestive to think of $s_i[t]$ as our preliminary guess for $S_i$ at stage $t$.

\smallskip

At stage $0$ we let
\begin{equation*}
	s_i[0] := i
\end{equation*}
for all $i\in\IN$. And for every $t\in\IN$, at stage $t+1$ we define
\begin{equation*}
	g(t) := s_{f(t)}[t]
\end{equation*}
and set
\begin{equation*}
	s_i[t+1] := \begin{cases}
		s_i[t] &\text{if $i \leq f(t)$}, \\
		s_i[t] + \max\{t, g(0), \dots, g(t)\}+1 &\text{if $i > f(t)$}, \\
	\end{cases}
\end{equation*}
for all $i \in \IN$. Finally, we define $B := g(\IN)$. 

\smallskip

This ends the construction, and we proceed with the verification. First note that $B$ is clearly computably enumerable.
	
\smallskip
	
\emph{Claim 1.} For every $t\in\IN$, the sequence $(s_i[t])_i$ is increasing.
	
\smallskip
	
\emph{Proof.}
This is clear for $t=0$, and can easily be seen to hold for all other $t$ by induction.\hfill$\Diamond$
%
	
\smallskip
	
\emph{Claim 2.} For every $i \in \IN$, the sequence $(s_i[t])_{t}$ is nondecreasing and eventually constant.
	
\smallskip
	
\emph{Proof.}
By construction $(s_i[t])_t$ is nondecreasing for every $i \in \IN$.
Fix some $i\in \IN$; to see that $(s_i[t])_t$ is eventually constant, choose $t_i$  such that $f(t) \geq i$ for all $t\geq t_i$. Note that such a $t_i$ exists because $(f(t))_t$ tends to infinity.
Then, for every $t\geq t_i$, we have $s_{i}[t+1]=s_{i}[t]$, and thus
$s_{i}[t]=s_{i}[t_i]$ for all $t\geq t_i$.\hfill$\Diamond$
%
	
\smallskip
	
Define the sequence $(S_i)_i$ by $S_i:=\lim_{t\to\infty} s_i[t]$.
By Claim~1, $(S_i)_i$ is increasing.
	
\smallskip
	
\emph{Claim 3.} For every $i\in\IN$ and every $t \geq S_i$ we have $s_i[t]=S_i$.
	
\smallskip
	
\emph{Proof.} 
Assume otherwise and choose $t\geq S_i$ with $s_i[t+1]\neq s_i[t]$. Then
\begin{equation*}
	S_i\geq s_i[t+1]=s_i[t]+\max\{t, g(0), \dots, g(t)\}+1 > t \geq S_i,
\end{equation*}
a contradiction.\hfill$\Diamond$
	
\smallskip
	
\emph{Claim 4.}
For every $i \in \IN$ and every $t \geq S_i$ we have $f(t) \geq i$.
	
\smallskip
	
\emph{Proof.} 
Consider any $i \in \IN$ and some stage $t \geq S_i$. If we had $f(t) < i$, then we would obtain
\begin{equation*}
	s_i[t+1] = s_i[t]+\max\{t, g(0), \dots, g(t)\}+1 > s_i[t]=S_i,
\end{equation*}
contradicting Claim~3.\hfill$\Diamond$
	
\smallskip
	
\emph{Claim 5.}
For every $i \in \IN$ and every $t \geq S_i$ we have $g(t) \geq S_i$. 
		
\smallskip

\emph{Proof.}
Consider any $i \in \IN$ and some stage $t \geq S_i$.
By Claim~4, $f(t) \geq i$, and by Claim~1, $(s_j[t])_j$ is increasing.
Together with Claim~3, we obtain
$g(t) = s_{f(t)}[t] \geq s_i[t] = S_i$.
\hfill$\Diamond$

\smallskip

\emph{Claim 6.}
$B$ is regainingly approximable.

\smallskip

\emph{Proof.}
According to Claim 5 the function
$n \mapsto g(n)+1$ is an $\id_\IN$-good, computable enumeration of $B$, and
hence $B$ is regainingly approximable.\hfill$\Diamond$

\smallskip

\emph{Claim 7.}
The function $g$ is injective. 
	
\smallskip

\emph{Proof.}
Consider two stages $t_1 < t_2$; we need to show $g(t_1) \neq g(t_2)$.
Since $f$ is injective, we have $f(t_1) \neq f(t_2)$.
\begin{itemize}
\item
If $f(t_1) < f(t_2)$, then due to Claims~1 and~2 we have
		\begin{equation*}
			g(t_1) = s_{f(t_1)}[t_1] <  s_{f(t_2)}[t_1] \leq s_{f(t_2)}[t_2] = g(t_2).
		\end{equation*}
\item
If $f(t_1) > f(t_2)$, then $s_{f(t_1)}[t_1] > s_{f(t_2)}[t_1]$ holds due to Claim~1. If $s_{f(t_2)}[t_2] = s_{f(t_2)}[t_1]$ then
		\begin{equation*}
			g(t_2)= s_{f(t_2)}[t_2] = s_{f(t_2)}[t_1] < s_{f(t_1)}[t_1] = g(t_1). 
		\end{equation*}
		Otherwise $s_{f(t_2)}[t_2] > s_{f(t_2)}[t_1]$ due to Claim~2 and we obtain
		\begin{equation*}
			g(t_2) = s_{f(t_2)}[t_2] \geq s_{f(t_2)}[t_1] + g(t_1) + 1 > g(t_1).
		\end{equation*}
\end{itemize} 
Thus, $g$ is injective.\hfill$\Diamond$

\smallskip
	
\emph{Claim 8a.}
$A$ is computable in $(S_i)_i$.
		
\smallskip
		
\emph{Proof:}
Given oracle $(S_i)_i$,
in order to decide whether $n\in A$ for arbitrary~${n \in \IN}$, it suffices to check whether~$n$ is enumerated by $f$ before stage~${S_{n+1}}$. If not, then since by Claim~4 we have $f(t) \geq n+1$ for all~${t \geq S_{n+1}}$, we must have $n \notin A$.\hfill$\Diamond$

\smallskip

\emph{Claim 8b.}
$B$ is computable in $(S_i)_i$.

\smallskip

\emph{Proof:}
Given oracle $(S_i)_i$,
in order to decide whether $n\in B$ for arbitrary $n \in \IN$ we proceed as follows: Compute the smallest $i \in \IN$ with $S_i > n$ and check whether~$n$ is enumerated by~$g$ before stage $S_{i}$. If not, then since by Claim~5 we have $g(t) \geq S_i > n$ for all $t \geq S_{i}$, we must have $n \notin B$.\hfill$\Diamond$
	
\smallskip

\emph{Claim 9a.}
The sequence $(S_i)_i$ is computable in $A$.

\smallskip

\emph{Proof.}
To determine $S_i$ for any $i \in \IN$
it suffices to use oracle $A$ to compute the smallest $t$ such that
\[ A \cap \{0,\dots,i-1\} \subseteq \{f(0),\ldots,f(t-1)\} \]
and to output $S_i = s_i[t]$.	\hfill$\Diamond$
	
\smallskip
	
\emph{Claim 9b.}
The sequence $(S_i)_i$ is computable in $B$.
	
\smallskip
	
\emph{Proof.}
We claim that there is a recursive algorithm using oracle $B$ that computes~$(S_i)_i$.
By construction, we have $S_0 = 0$.
So suppose that for~${i \in \IN}$ the numbers $S_0, \dots, S_i$ are known.
We claim that 
\begin{enumerate}
	\item[(i)] if $S_i\in B$ and if the uniquely determined (cf.~Claim~7) number $t$ with $g(t)=S_i$ satisfies
	$f(t) = i$ and $t \geq S_i$, then $S_{i+1}=s_{i+1}[t+1]$; and that
	\item[(ii)] $S_{i+1}=s_{i+1}[S_i]$ holds otherwise.
\end{enumerate}
Assuming this claim, oracle $B$ clearly computes $S_{i+1}$.

To see (i), assume that $S_i\in B$ and
that the uniquely determined $t$ with $g(t)=S_i$ satisfies
$f(t) = i$ and $t \geq S_i$.
By Claim 4 and because $f$~is injective, for all $t'\geq t+1$ we obtain $f(t')\geq i+1$. This implies $S_{i+1}=s_{i+1}[t+1]$.

For (ii), if $S_{i+1}\neq s_{i+1}[S_i]$, then there must exist a number $t\geq S_i$ with ${s_{i+1}[t+1]\neq s_{i+1}[t]}$ and hence with
$i+1>f(t)$. Then, by Claim 4, $f(t)=i$, and by Claim 3, $S_i=s_i[t]=s_{f(t)}(t)=g(t)$, hence, $S_i\in B$.\hfill$\Diamond$

\smallskip
	
This concludes the proof that for every c.e.\ set $A \subseteq \IN$ there exists 
a regainingly approximable set $B \subseteq \IN$ with $A \equiv_T B$.
\end{proof}

\begin{cor}
There is a high regainingly approximable set.
\end{cor}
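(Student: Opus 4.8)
The plan is to reduce this to the theorem immediately preceding it together with the classical fact that high computably enumerable sets exist. Recall that a set $X\subseteq\IN$ is \emph{high} if $X'\equiv_T\emptyset''$, and that this is a property of the Turing degree of $X$: if $X\equiv_T Y$ then $X'\equiv_T Y'$, so $X$ is high if and only if $Y$ is. The halting problem $\emptyset'$ is a computably enumerable set, and it is high since $(\emptyset')'=\emptyset''$.

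So first I would fix $A:=\emptyset'$, which is c.e.\ and high. Applying the theorem just proved to this $A$ yields a regainingly approximable set $B\subseteq\IN$ with $A\equiv_T B$. Since highness is invariant under Turing equivalence and $A$ is high, $B$ is high as well. Thus $B$ is a high regainingly approximable set, as desired.

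There is essentially no obstacle here: the entire content has already been established in the preceding theorem, and all that remains is to instantiate it at a conveniently chosen c.e.\ oracle. One could equally well take $A$ to be any high c.e.\ set (for instance any maximal set, by Martin's theorem characterising highness, or indeed any c.e.\ set of high degree); the only thing being used is the existence of such a set and the fact that the construction in the theorem transfers the Turing degree — and hence the property of highness — to the regainingly approximable set it produces.
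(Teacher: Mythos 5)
Your proof is correct and is the evidently intended one: the paper states this corollary without a written proof, and the argument is exactly to take any high c.e.\ set (such as $\emptyset'$), apply the preceding theorem to obtain a Turing-equivalent regainingly approximable set, and use that highness is degree-invariant.
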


\section{Complexity of Regainingly Approximable Numbers}
\label{section:Kolmogorov-complexity}

Let $\Sigma:=\{0,1\}$, let $\Sigma^n$ denote the set
of finite binary strings of length~$n$, and write $\Sigma^*$ for $\bigcup_n \Sigma^n$. 
For a binary string $v$, let $|v|$ denote its length.
By $0^n$ we denote the string of length $n$ that consists of $n$ zeros,
and for any infinite binary sequence $A$ and any number $n\in\IN$,
by $A \restriction n$ we denote the string $A(0)\ldots A(n-1)$ of length $n$.
For a function $f\colon\dom(f)\to\Sigma^*$ with $\dom(f) \subseteq\Sigma^*$ write \[C_f(w):=\min(\{|v|\colon f(v)=w\}\cup \{\infty\})\] for every~${w\in \Sigma^*}$. If $\dom(f)$ has the property that no two different elements in it can be prefixes of each other then we say that $f$ has prefix-free domain; and in this case it is customary to write $K_f$ for~$C_f$. 
Let $C$ denote~$C_{U_1}$ for some optimally universal Turing machine~$U_1$ and let $K$ denote~$K_{U_2}$ for some optimally universal prefix-free Turing machine~$U_2$ (see, for instance, Downey and Hirschfeldt~\cite[Sections 3.1 and 3.5]{DH2010} for a discussion of such Turing machines).
In the remainder of this section we will write $U$ for~$U_2$.
The functions $C$ and $K$ are called the {\em plain} and the {\em prefix-free Kolmogorov complexity}, respectively.
 
An infinite binary sequence $A$ is called \emph{i.o.~$K$-trivial} if there exists a constant
$c\in\IN$ such that, for infinitely many~$n$,
\[K(A \restriction n) \leq K(0^n) + c.\]
Recalling the conventions laid out in Section~\ref{dfnasdnsdfdfg}, we can make the following observation.
\begin{prop}
	\label{prop:inf-often-K-trivial}
	Every regainingly approximable number $\alpha$ is i.o.\  $K$-trivial, and
	hence not Martin-L\"of random.
\end{prop}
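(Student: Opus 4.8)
The plan is to exploit the key structural feature of a regaining approximation: at the infinitely many stages~$n$ where $\alpha - a_n < 2^{-n}$, the rational $a_n$ already agrees with $\alpha$ on its first $n$ bits (up to a carry issue I address below), so the length-$n$ initial segment of $\alpha$ is computable from the short description ``run the approximation to stage $n$.'' First I would fix a computable nondecreasing sequence $(a_n)_n$ of rationals converging to $\alpha$ with $\alpha - a_n < 2^{-n}$ for infinitely many~$n$; by the conventions of Section~\ref{dfnasdnsdfdfg} we identify $\alpha$ with its canonical binary expansion. The subtle point is that $\alpha - a_n < 2^{-n}$ does not literally force $a_n \restriction n = \alpha \restriction n$, because $\alpha$ could have a long run of $1$s just past position~$n$; but it does force that $\alpha \restriction n$ is one of at most two strings, namely $a_n \restriction n$ or its successor (obtained by adding $2^{-n}$ and truncating). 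So from the index $n$ alone, a machine can compute a set of $\le 2$ candidates for $\alpha \restriction n$. To pin down which one, it suffices to append a single extra bit of advice, or alternatively to wait for a later stage $a_m$ with $m$ large enough to separate the candidates; either way the cost is $O(1)$.

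Next I would assemble the prefix-free machine. The idea is to build $M$ so that on input a prefix-free code for the pair $(n, b)$ — where $b\in\{0,1\}$ is the disambiguating bit — $M$ simulates the approximation, computes the candidate strings, and outputs the one selected by~$b$. Feeding $M$ a shortest $U$-description $\sigma$ of $0^n$ together with the bit $b$, and using a standard prefix-free pairing, gives a description of $\alpha \restriction n$ of length $K(0^n) + O(1)$. (Here it is convenient that $0^n$ already encodes~$n$, so we need not describe $n$ separately: $|0^n| = n$ is recovered, hence the approximation can be run to stage~$n$.) By optimality of~$U$ we then get $K(\alpha \restriction n) \le K(0^n) + c$ for some fixed constant~$c$, for every~$n$ in the infinite set of regaining stages. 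This is exactly i.o.\ $K$-triviality. Finally, since the i.o.\ $K$-trivial sequences are not Martin-L\"of random — a random $A$ satisfies $K(A\restriction n) \ge n - O(1)$ while $K(0^n) \le 2\log n + O(1)$, so $K(0^n)+c < n - O(1)$ for all large~$n$, contradicting i.o.\ $K$-triviality — the second assertion follows immediately.

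The main obstacle, and the only place where care is genuinely needed, is the carry phenomenon in the first paragraph: making precise that $\alpha - a_n < 2^{-n}$ bounds the number of candidate length-$n$ prefixes of $\alpha$ by a constant, and handling the boundary case where $a_n$ is not yet even correct to $n$ bits because of a pending carry from beyond position~$n$. One clean way to finesse this is to observe that we are free, by Proposition~\ref{prop:equivalent-conditions}(3), to pass to an approximation satisfying the stronger bound $\alpha - a_n < 2^{-2n}$ at the regaining stages; then $a_n$ and $\alpha$ can differ only in positions $\ge 2n$, so $a_n \restriction n$ agrees with $\alpha \restriction n$ unless $\alpha$ has $1$s in all of positions $n, n+1, \dots, 2n-1$, an event that still leaves only the single alternative candidate $(a_n + 2^{-n})\restriction n$. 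Either way the candidate count is $\le 2$ and the advice bit costs $O(1)$, so the argument goes through. Everything else — the prefix-free machine construction, the pairing, the invocation of universality — is routine.
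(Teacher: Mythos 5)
Your proposal is correct and follows essentially the same approach as the paper's proof: both observe that at a regaining stage $n$ the inequality $\alpha - a_n < 2^{-n}$ pins $\alpha\restriction n$ down to at most two candidate strings computable from $n$, build a prefix-free machine that reads a $U$-program for $0^n$ plus one advice bit to select between the two candidates, and conclude $K(\alpha\restriction n)\leq^+ K(0^n)+1$ for infinitely many $n$. The only cosmetic difference is that the paper first passes (via Proposition~\ref{prop:equivalent-conditions}) to a strictly increasing approximation and handles the computable case separately, whereas you work with the nondecreasing approximation directly and optionally tighten the error bound to $2^{-2n}$; neither variation changes the substance, and your explicit justification of the final ``hence not Martin-L\"of random'' step via the Levin--Schnorr characterization and $K(0^n)\leq 2\log n + O(1)$ is the standard argument the paper leaves implicit.
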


\begin{proof}
	Assume w.l.o.g.\ that $\alpha$ is not computable and $\alpha \in (0,1)$.
%
	Let $(a_n)_n$ be an increasing, computable sequence of w.l.o.g.\ strictly positive rational numbers
	converging to $\alpha$ such that, for infinitely many $n$, $\alpha-a_n<2^{-n}$.
	
	For every $n\in\IN$, let $u_n$ be the binary string of length $n$
	with 
	\[0.u_n \leq a_n < 0.u_n+2^{-n}.\]
	If $u_n\neq 1^n$ let $v_n\in\Sigma^n$ be such that $0.v_n = 0.u_n+2^{-n}$; otherwise let~${v_n:=u_n}$. Clearly, $(u_n)_n$ and $(v_n)_n$ are computable.
	
	Define a computable function $f\colon \dom(f)\to\Sigma^*$ with prefix-free domain $\dom(f)\subseteq\Sigma^*$ as follows:
	If $w\in\Sigma^*$ satisfies $U(w)=0^n$, for some $n\in\IN$, then  let
	$f(w0) := u_n$ and $f(w1) := v_n$. Otherwise we leave $f(wa)$ undefined for~${a\in\Sigma}$. Note that the domain of $f$ is prefix-free.
	
	By construction, for infinitely many $n$, we have $\alpha - 0.u_n < 2 \cdot 2^{-n}$. Then, for these $n$, we have $\alpha \restriction n \in \{u_n,v_n\}$; thus 
	\[K(\alpha\restriction n) \leq^+ K_f(\alpha\restriction n) = K(0^n)+1,\] and $\alpha$ is i.o.\  $K$-trivial.
\end{proof}
In view of Proposition~\ref{prop:inf-often-K-trivial}, it is natural to wonder whether regainingly approximable numbers can at the same time have infinitely many initial segments of high Kolmogorov complexity. The answer is yes, as demonstrated by the next theorem.

\begin{theorem}
\label{theorem:high-Kolmogorov-complexity}
There is a regainingly approximable number ${\alpha \in (0,1)}$ such that
$K(\alpha \restriction n) > n$ for infinitely many $n$.
\end{theorem}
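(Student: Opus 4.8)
The plan is to build $\alpha$ by a finite-injury-free diagonalization that interleaves two competing demands: a global requirement forcing $\alpha$ to be regainingly approximable, and the requirements $\mathcal{R}_e$ (one for each partial computable prefix-free machine, or equivalently for each computable "compression attempt") ensuring $K(\alpha\restriction n)>n$ for infinitely many $n$. I would construct a computable increasing sequence $(a_n)_n$ of rationals in $(0,1)$ converging to $\alpha$ and maintain an infinite computable set of "catch-up stages" $n$ at which $\alpha-a_n<2^{-n}$ is guaranteed; by Definition~\ref{definition:regaining} this makes $\alpha$ regainingly approximable. The key tension is that at a catch-up stage $n$ we have pinned down $\alpha\restriction n$ to within $2^{-n}$, so $\alpha\restriction n$ becomes a cheap description target once the (finitely many) remaining bits are spent — this is exactly the mechanism of Proposition~\ref{prop:inf-often-K-trivial}, and we must make sure it does not also force $K(\alpha\restriction n)\le^+ K(0^n)$ at the *same* lengths where we want high complexity. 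The way around this is that the high-complexity lengths $n$ and the catch-up lengths are allowed to be disjoint: between two consecutive catch-up stages $(a_n)_n$ may "dawdle," and it is precisely in such a dawdling window that we insert a long block of bits of $\alpha$ chosen to be incompressible.

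Concretely, I would work in stages. We reserve an increasing sequence of "targets" $n_0<n_1<\cdots$ and, at the $e$-th target $n_e$, try to make $K(\alpha\restriction n_e)>n_e$. To do this we use the standard Kraft–Chaitin / weight-counting argument: the number of strings $\sigma$ of length $n$ with $K(\sigma)\le n$ is at most $2^{n+1}$, but by a counting/measure argument the $U$-descriptions of length $\le n$ cover only a set of strings of measure strictly less than $1$ (indeed, $\sum_{|\sigma|=n,\,K(\sigma)\le n} 2^{-n} < \sum_w 2^{-|w|}\le 1$), so at stage $e$ there is always a length-$n_e$ string $\sigma_e$, extending our current commitment $a_n\restriction$-prefix, with $K(\sigma_e)>n_e$ — and moreover we can find such a $\sigma_e$ in the *upper half* of the current interval of uncertainty, so that committing to it only moves $a_n$ upward, preserving monotonicity. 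We then set the next several $a_k$ to agree with $0.\sigma_e$ (padded with zeros), declare a fresh catch-up stage somewhere beyond $n_e$ once we've added enough zero bits, and move on to target $e+1$. Since $K$ is only approximable from above, the test "$K(\sigma)>n_e$" cannot be decided; so instead, following the usual trick, we pick $\sigma_e$ conservatively among strings whose current-stage upper bound for $K$ already exceeds $n_e$ — there are enough such strings at every stage by the same counting bound, and once chosen we never need to revise, because $K(\sigma_e)\ge$ its current approximation.

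The main obstacle is reconciling monotonicity of $(a_n)_n$ with the freedom to choose the incompressible string $\sigma_e$: a priori the incompressible string in the relevant interval might require us to *decrease* $a_n$. I would handle this by always keeping a nontrivial sub-interval of "slack" $[a_n, a_n + 2^{-\ell})$ available above the current approximation (this is cheap: just don't commit the low-order bits yet), choosing $\sigma_e$ to be a length-$n_e$ string lying in the *top* portion of that slack interval with high $K$-complexity — possible since the incompressible strings are dense enough (their complement has measure $<1$, in fact one can get a positive fraction in any dyadic subinterval of sufficient length by the counting bound) — and then committing upward to $0.\sigma_e$. After committing, we append a long run of $0$s to create a new slack interval and to push past a new catch-up stage. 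A secondary, purely bookkeeping obstacle is to verify that the catch-up stages we declare genuinely satisfy $\alpha-a_n<2^{-n}$: this holds because, having just appended zeros, the only further change to $\alpha$ beyond $a_n$ comes from bits strictly past position $n$, so the tail is bounded by $2^{-n}$ as in the proof of Theorem~\ref{theorem:reg-approx-sets-some-approx}. Finally one checks the construction is computable and the sequence converges (each target contributes only finitely many bits before the next zero-block), so $\alpha$ is regainingly approximable with $K(\alpha\restriction n_e)>n_e$ for all $e$, as required.
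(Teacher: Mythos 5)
Your key step does not work as stated: you propose to pick $\sigma_e$ ``conservatively among strings whose current-stage upper bound for $K$ already exceeds $n_e$'' and then assert that ``once chosen we never need to revise, because $K(\sigma_e)\ge$ its current approximation.'' This has the inequality backwards. The stage-$t$ quantity $K(\sigma)[t]$ is an \emph{upper} bound for $K(\sigma)$ that only decreases with $t$; so seeing $K(\sigma_e)[t]>n_e$ at the stage where you commit gives you no guarantee at all that $K(\sigma_e)>n_e$ in the limit. And your construction genuinely commits: once $\sigma_e$ has been written into the approximation and you have appended a zero-block and declared a new catch-up stage, monotonicity of $(a_n)_n$ forbids you from ever retracting $\sigma_e$. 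Nothing then prevents the universal machine from later revealing a short program for every single $\sigma_e$ you pick (the total weight it needs is $\sum_e 2^{-n_e}<\infty$, which is affordable), in which case your construction produces no witnesses at all. This is not a presentational slip; it is exactly the difficulty the paper's proof is organized around. The paper does not try to \emph{recognize} incompressible prefixes; instead it keeps adding mass from the series $\sum_n 2^{-|h(n)|}$ scaled by a weight, so that if no initial segment of provably high complexity ever appears the running approximation converges to an affine image of Chaitin's $\Omega$ — a Martin-L\"of random real, which must eventually exhibit a long high-complexity prefix, a contradiction (Claim~6 in the paper's proof). Moreover, when an apparent high-complexity prefix turns out to be a false alarm (the $K$-approximation drops), the paper retroactively increases the weights again, which is permissible because only left-computability is needed. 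Your outline contains no analogue of either of these two ideas — the indirect randomness argument that forces success, and the retroactive correction — and without them the plan cannot be completed.
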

We point out that an analogous result cannot hold for plain Kolmogorov complexity; this is due to a result of 
Martin-L\"of~\cite{MR451322} (see Nies, Stephan and Terwijn~\cite[Proposition~2.4]{MR2140044})
who proved that any infinite binary sequence $A$ for which ${C(A\restriction n)>n}$ holds for infinitely many~$n$ is Martin-L\"of random;
thus such an $A$ cannot be regainingly approximable.

\begin{proof}[Proof of Theorem~\ref{theorem:high-Kolmogorov-complexity}]
Fix a computable injective function $h\colon \IN\to\Sigma^*$ with ${h(\IN)=\dom(U)}$.
For $t\in\IN$ and $v\in\Sigma^*$ write
\[ U^{-1}\{v\}[t] := \{u \in\Sigma^* \colon U(u)=v \text{ and } (\exists s < t) (h(s)=u) \} \]
\[ K(v)[t] := \begin{cases}
     \infty & \text{if } U^{-1}\{v\}[t]=\emptyset, \\
     \min\{|u|\colon u \in U^{-1}\{v\}[t]\} & \text{otherwise}.
     \end{cases}
\]
Then, for every $v\in\Sigma^*$, the function $t\mapsto K(v)[t]$ is nonincreasing and eventually constant
with limit $K(v)$.

\smallskip

We sketch the underlying idea before giving the formal proof. In order to ensure that $\alpha$ has initial segments of high prefix-free Kolmogorov complexity, 
we want to mimic Chaitin's $\Omega$, that is, the sum $\sum_n 2^{-|h(n)|}$.
Of course, overdoing this would lead to a Martin-L\"of random number in the limit, and such numbers cannot be regainingly approximable. Instead, $\alpha$ is obtained from the series $\sum_n 2^{-|h(n)|}$ by multiplying its elements by
certain weights.
More precisely, at any given time some weight is fixed, and we keep adding elements to the series scaled by this weight. As continuing to do this forever would lead to a scaled copy of $\Omega$, that is, to some Martin-L\"of random number, that process must eventually produce a rational number with an initial segment of high Kolmogorov complexity.

If at a stage $t$ it looks as if that has happened, we change the weight to a new, smaller value that is at most $2^{-t}$. As all new terms that are added to the series after $t$ will now be scaled by this new weight, their total sum cannot exceed $2^{-t}$. Doing this for infinitely many $t$ then ensures regaining approximability of~$\alpha$.

One issue with this approach is that we can never be sure about the stages when we change weights. This is because prefix-free Kolmogorov complexity is uncomputable, and we have to work with its approximations. Thus, it may turn out that what looked like an initial segment of high complexity at stage~$t$ really has much lower complexity, and as a result we may have dropped the weight too early. However, since $\alpha$ only needs to be left-computable, it is easy to deal with this: we simply retroactively scale up the weights of all terms that had been given too small a weight.

\smallskip

We now come to the formal construction, which will proceed in stages to define computable functions ${\ell,r\colon\IN^2\to\IN}$ and $w\colon \IN^2\to\IN\cup\{\infty\}$
as well as a computable sequence $(a_t)_t$ of dyadic rational numbers in the interval $[0,1)$. We shall write $\ell(n)[t]$ for $\ell(n,t)$, $r(n)[t]$ for $r(n,t)$, and $w(n)[t]$ for $w(n,t)$.

\smallskip

At stage $0$ define 
\[ \ell(n)[0]:=0 \text{ and } r(n)[0]:=n \text{ and } w(n)[0]:=\infty, \]
for all~$n\in\IN$, as well as $a_0:=0$.

\smallskip

At a stage $t$ with $t>0$ first define $\ell(0)[t]:=0$ and, for $n>0$,
\[ \ell(n)[t] := \min\{m \in \IN \colon m > \ell(n-1)[t] \text{ and } K(a_{t-1} \restriction m)[t]>m\} . \]
Note that this is well-defined because for a fixed $t$ we have $K(v)[t]=\infty$ for almost all~$v\in\Sigma^*$. Intuitively speaking, $\ell(n)[t]$ is our guess at stage~$t$ about the length of what we hope will be the $n$-th witness for the existence of infinitely many initial segments of~$\alpha$ that have high complexity.

If there is no $i<t$ with $\ell(i)[t]\neq \ell(i)[t-1]$, then define
\[ r(n)[t] := r(n)[t-1] \text{ and } w(n)[t]:=w(n)[t-1], \]
for every $n\in\IN$. Otherwise let $i_t$ denote the minimal such $i$ 
and define
\[ r(n)[t] := \begin{cases}
     r(n)[t-1] & \text{if } n \leq i_t, \\
     r(n)[t-1]+t & \text{if } i_t < n,
   \end{cases}
\]
\[ w(n)[t] := \begin{cases}
     \min(w(n)[t-1],r(i_t)[t])  & \text{if } n < t,\\
     \infty & \text{otherwise,}
   \end{cases}
\]
for every $n\in\IN$. 
In either case, using the convention $2^{-\infty}:=0$, define 
\[ a_t := \sum_{n=0}^{t-1} 2^{-w(n)[t]} \cdot 2^{-|h(n)|} . \]
Using the terminology from the informal proof sketch above, $r$ is employed to reduce weights when we believe that we discovered an initial segment of high complexity, with the intent of ensuring regaining approximability of~$\alpha$. This
is then used to define the function~$w$ determining the scaling factors that will be applied to the elements of the series $\sum_n 2^{-|h(n)|}$; note how the appearance of~$i_t$ in the definition of~$w$ enables retroactively increasing these factors later if required.

\smallskip

This ends the description of the construction; we proceed with the verification. 
It is easy to see that $\ell,r, w$, and $(a_t)_t$ are computable; we need to argue that $\alpha:=\lim_{t\to\infty} a_t$ exists with the desired properties. The following claims are immediate.

\smallskip

\emph{Claim 1.} 
For every $t\in\IN$ the function $n\mapsto r(n)[t]$ is increasing.\hfill$\Diamond$

\smallskip

\emph{Claim 2.} 
For every $n\in\IN$ the function $t\mapsto r(n)[t]$ is nondecreasing.\hfill$\Diamond$

\smallskip

\emph{Claim 3.} 
For every $t\in\IN$ the function $n\mapsto w(n)[t]$ is nondecreasing
and satisfies $w(n)[t]=\infty$ for all $n\geq t$.\hfill$\Diamond$

\smallskip

\emph{Claim 4.} 
For every $n\in\IN$ the function $t\mapsto w(n)[t]$ is nonincreasing.\hfill$\Diamond$

\smallskip

Due to Claim 3, using the convention $2^{-\infty}=0$, we can write
\[ a_t = \sum_{n=0}^\infty 2^{-w(n)[t]} \cdot 2^{-|h(n)|} , \]
for all $t\in\IN$. 
By Claim 4, $(a_t)_t$ is nondecreasing, and since 
\[ a_t \leq \sum_{n=0}^{t-1} 2^{-|h(n)|} < 1 , \]
for all $t\in\IN$, its limit $\alpha$ exists and 
is a left-computable number in $(0,1]$.

\smallskip

\emph{Claim 5.}
Let $n_0,t_0\in\IN$ be numbers such that
\begin{itemize}
	\item[(i)] $n_0<t_0$,
	\item[(ii)] $\ell(n_0)[t_0]\neq \ell(n_0)[t_0-1]$,
	\item[(iii)] for every $t\geq t_0$ and every $i<n_0$, $\ell(i)[t]=\ell(i)[t-1]$.
\end{itemize}
Then $w(n)[t]=w(n)[t_0]$ for all $n<t_0$ and all $t\geq t_0$.

\smallskip

\emph{Proof.} Fix any $n<t_0$ and $t>t_0$; in order to show ${w(n)[t]=w(n)[t_0]}$ we may inductively assume that the statement has already been proven for $t-1$, that is, that ${w(n)[t-1]=w(n)[t_0]}$ holds; thus it suffices to prove $w(n)[t]=w(n)[t-1]$.

This is clear by construction if there is no $i<t$ with ${\ell(i)[t]\neq \ell(i)[t-1]}$; thus let us assume that such an $i$ does exist. In this case, by construction, $w(n)[t]=\min(w(n)[t-1],r(i_t)[t])$ and hence it is enough to show that $w(n)[t-1] \leq r(i_t)[t]$.

By definition we have $\ell(i_t)[t]\neq\ell(i_t)[t-1]$; thus (iii) implies~$i_t\geq n_0$. Similarly, (ii) and (iii) imply $i_{t_0}=n_0$. Then using Claims 1 and 2 as well as the induction hypothesis we obtain
\begin{align*}
 w(n)[t-1] &= w(n)[t_0] = \min(w(n)[t_0-1],r(i_{t_0}[t_0])) \\
 &\leq  r(i_{t_0})[t_0] = r(n_0)[t_0] \leq r(i_t)[t_0] \leq r(i_t)[t], 
\end{align*}
which concludes the proof.\hfill$\Diamond$

\smallskip

\emph{Claim 6.}
For every $n\in\IN$, the function $t\mapsto \ell(n)[t]$ is eventually constant.

\smallskip

\emph{Proof.}
For the sake of a contradiction, assume that there is a
smallest $n_0\in\IN$ such that
$t\mapsto \ell(n_0)[t]$ is not eventually constant.
By the definitions above we must have $n_0>0$. Define
\[ t_0 := \min\left\{t > n_0 \colon \begin{array}{c}
      \ell(n_0)[t]\neq \ell(n_0)[t-1] \text{ and } \\
      (\forall i< n_0) (\forall s \geq t) ( \ell(i)[s]=\ell(i)[s-1] )
      \end{array} \right\}
\]
and, for $j>0$,
\[ t_j := \min\{t > t_{j-1} \colon \ell(n_0)[t]\neq \ell(n_0)[t-1] \} .
\]
The sequence $(t_j)_j$ is trivially increasing and
$w(n)[t_j]=w(n)[t_0]$ for all~$n<t_0$ and all $j\in\IN$, by Claim 5.

We show by induction that 
$w(n)[t]\geq r(n_0)[t_0]$ for all $n\geq t_0$ and $t\geq t_0$: 
\begin{itemize}
	\item For $t=t_0$ this is clear by Claim 3.
	
	\item For $t>t_0$ and if there is no $i<t$ with $\ell(i)[t]\neq\ell(i)[t-1]$ then the assertion is clear inductively by definition of~$w$.
	
	\item  If $t>t_0$ and there is an $i<t$ with $\ell(i)[t]\neq\ell(i)[t-1]$ then,
	\begin{itemize}
		\item if $n\geq t$ we have $w(n)[t]=\infty > r(n_0)[t_0]$, and
		\item if $n<t$ then
		$w(n)[t] = \min(w(n)[t-1],r(i_t)[t]) \geq r(n_0)[t_0]$.

		This is because, on the one hand, $w(n)[t-1] \geq r(n_0)[t_0]$ can be assumed to hold inductively and, on the other hand, 
		$r(i_t)[t] \geq r(n_0)[t] \geq r(n_0)[t_0]$
		by Claims 1 and 2 together with the fact that $i_t\geq n_0$ by choice of~$t_0$.
	\end{itemize}	
\end{itemize}

By choice of $t_0$ we have for every $j\in\IN$ that $i_{t_j}=n_0$ and thus \[r(i_{t_j})[t_j]=r(n_0)[t_j]=r(n_0)[t_0].\]
Combining this with the previous results and using the definition of~$w$, for $j>0$ we see that
\[ w(n)[t_j]= r(n_0)[t_0] \text{ for } t_0 \leq n < t_j. \]
Hence, for all $j>0$ we obtain
\begin{eqnarray*}
  a_{t_j} &=& \sum_{n=0}^{t_j-1} 2^{-w(n)[t_j]} \cdot 2^{-|h(n)|} \\
    &=& \sum_{n=0}^{t_0-1} 2^{-w(n)[t_0]} \cdot 2^{-|h(n)|} + \sum_{n=t_0}^{t_j-1} 2^{-r(n_0)[t_0]} \cdot 2^{-|h(n)|} \\
    &=& b_0 + 2^{-r(n_0)[t_0]} \cdot \sum_{n=0}^{t_j-1} 2^{-|h(n)|} ,
\end{eqnarray*}
where $b_0$ is the rational number given by
\[ b_0 := \sum_{n=0}^{t_0-1} \left( 2^{-w(n)[t_0]} - 2^{-r(n_0)[t_0]} \right) \cdot 2^{-|h(n)|} . \]
Writing $\Omega:=\sum_{u\in\dom(U)} 2^{-|u|}=\sum_{n=0}^\infty  2^{-|h(n)|}$
we conclude that
\[ \alpha = b_0 + 2^{-r(n_0)[t_0]} \cdot  \Omega .\]
Thus we see that $\alpha$ is Martin-L\"of random, and by a result of Chaitin~\cite{Cha1987} (see, for instance, Downey and Hirschfeldt~\cite[Corollary 6.6.2]{DH2010}) we have for all sufficiently large $m$ that $K(\alpha \restriction m) > m$.
Fix an $m_0>\ell(n_0-1)[t_0]$ with $K(\alpha \restriction m_0) > m_0$, and a~$j_0$ such that, for all $t\geq t_{j_0}$, we have $a_{t-1} \restriction m_0 = \alpha \restriction m_0$.
Then, for $t\geq t_{j_0}$ we obtain
\[ K(a_{t-1} \restriction m_0)[t]
  = K(\alpha \restriction m_0)[t]
  \geq K(\alpha \restriction m_0)
  > m_0, \]
hence, by definition, $\ell(n_0)[t]\leq m_0$ for any such~$t$. 
Combining this with the fact that the map $t \mapsto \ell(n_0)[t]$ is by definition nondecreasing for~${t\geq t_{j_0}}$ shows that it must be 
eventually constant after all, a contradiction.\hfill$\Diamond$

\smallskip

Define $L_n:=\lim_{t\to\infty} \ell(n)[t]$, for $n\in\IN$.
By the definition of $\ell$ we have that $(L_n)_n$
is increasing and that 
\begin{equation*}
\label{eq:InfinitelyOftenHighKolmogorovComplexity}
K(\alpha\restriction L_n) > L_n \text{ for all } n\in\IN.\tag{$\ast$}
\end{equation*}
It remains to show that $\alpha$ is regainingly approximable.
To that end, define a sequence $(s_m)_m$ by
\[ s_m := \max\left(\{0\}\cup
    \{s \in\IN \colon (\exists i<m)\; (i<s \text{ and } \ell(i)[s]\neq \ell(i)[s-1]) \}\right), \]
for all $m\in \IN$. Clearly, $(s_m)_m$ is nondecreasing.

\smallskip

{\em Claim 7.}
The sequence $(s_m)_m$ is unbounded.

\smallskip

{\em Proof.}
For the sake of a contradiction, let us assume that $(s_m)_m$ is bounded and that $S\in\IN$ is an upper bound.
Then, for all $t>S$ and all~${m\in\IN}$ there is no $i<m$ with $i<t$ and $\ell(i)[t]\neq \ell(i)[t-1]$.
Hence, for all $t>S$ and $i<t$ we have $\ell(i)[t]=\ell(i)[t-1]$.
This implies $w(n)[t]=w(n)[t-1]$ for all $n\in\IN$ and all $t>S$.
We obtain $a_t=a_{t-1}$ for all $t>S$, hence, $\alpha=a_S$.
Thus $\alpha$ would be a rational number, which contradicts~\eqref{eq:InfinitelyOftenHighKolmogorovComplexity}.
\hfill$\Diamond$

\smallskip

Consider an $m\in \IN$ with $s_m>0$.
We claim that 
\[ \alpha - a_{s_m} < 2^{-s_m} . \]
By Claim 4, for every $n\in\IN$, the function $t\mapsto w(n)[t]$ is eventually constant.
Thus we can define $W_n:=\lim_{t\to\infty} w(n)[t]$ for every $n\in\IN$ and write
\[ \alpha = \sum_{n=0}^\infty 2^{-W(n)} \cdot 2^{-|h(n)|} . \]

\smallskip

\emph{Claim 8.} For $n < s_m$ we have $W(n)= w(n)[s_m]$.

\smallskip

\emph{Proof.} 
By definition of $s_m$, we have $\ell(i)[t] = \ell(i)[t-1]$ for all $t\geq s_m$ and all $i<i_{s_m}$. Thus, the claim follows from Claim 5 with $s_m$ in place of $t_0$ and $i_{s_m}$ in place of $n_0$.
\hfill$\Diamond$

\smallskip

\emph{Claim 9.} For $n \geq s_m$ we have $W(n) \geq s_m$.

\smallskip

\emph{Proof.}
It suffices to prove that $w(n)[t]\geq s_m$ for all $t\geq s_m$ and $n\geq s_m$.
We proceed by induction over $t$. By Claim 3 the assertion is clear for $t=s_m$.
Thus consider an arbitrary $t>s_m$. If there is no $i<t$ with $\ell(i)[t]\neq\ell(i)[t-1]$, then the assertion is clear inductively by definition of~$w$. 

So let us assume that there exists such an $i$. In case that $n\geq t$ we obtain $w(n)[t]=\infty>s_m$, and we are done. Otherwise, if $n<t$, then by definition $w(n)[t]=\min(w(n)[t-1],r(i_t)[t])$. Assuming inductively that the assertion is already proven for $t-1$, it is thus enough to show $r(i_t)[t]\geq s_m$. By the definition of $s_m$ we have $i_t>i_{s_m}$, thus by Claim~2,
\[ r(i_t)[t] \geq r(i_t)[s_m] = r(i_t)[s_m-1]+s_m \geq s_m, \]
completing the proof of Claim 9.\hfill$\Diamond$

\smallskip

Using Claim 8 it follows that 
\[a_{s_m} = \sum_{n=0}^{s_m-1} 2^{-w(n)[s_m]} \cdot 2^{-|h(n)|}
= \sum_{n=0}^{s_m-1} 2^{-W(n)} \cdot 2^{-|h(n)|},\] 
and thus, using Claim 9, we can conclude that
\begin{align*}
\alpha - a_{s_m} &= \sum_{n=s_m}^\infty 2^{-W(n)} \cdot 2^{-|h(n)|}\\
   &\leq \sum_{n=s_m}^\infty 2^{-s_m} \cdot 2^{-|h(n)|}\\
   &< 2^{-s_m} \cdot \Omega \\
   &< 2^{-s_m}.\qedhere\end{align*} 
\end{proof}

\section{Complexity of Regainingly Approximable Sets}\label{sdfhsdfhjsdfdgdgddf}

As every c.e.\ set is i.o.\  $K$-trivial by a result of Barmpalias and Vlek~\cite[Proposition 2.2]{MR2866562}, this holds in particular for every regainingly approximable set. Thus, in analogy to the results in the last section, it is natural to wonder if a regainingly approximable set may at the same time have infinitely many initial segments of high Kolmogorov complexity. 

Of course, in the setting of c.e.\ sets, there is no hope of achieving complexities as high as in the previous section; this is because for any such set $A$ we trivially have $C(A \restriction n ) \leq^+ 2 \log n$ for all $n$. Kummer~\cite[Theorem 3.1]{Kummer1996} showed that there exist c.e.~sets that achieve this trivial upper bound infinitely often; to be precise, that there exists a c.e.~set $A$ and a constant $d\in\IN$ such that, for infinitely many $n$,
\begin{equation}
\label{eq:Kummercomplex}
C(A \restriction n ) \geq 2 \log n - d. \tag{$\dagger$}
\end{equation}
Such sets have been named {\em Kummer complex}~(see, for instance, Downey and Hirschfeldt~\cite[Section~16.1]{DH2010}). 
We will show that there are Kummer complex regainingly approximable sets.

For prefix-free complexity, analogous observations can be made: In this setting it is easy to see that for
every computably enumerable set $A$ we have
$K(A \restriction n) \leq^+ 2\log(n) + 2\log\log(n)$
for all $n \in \IN$. 
Barmpalias and Downey~\cite[Theorem 1.10]{BD2017} constructed a c.e.\ set $A$ and a number $d\in\IN$ with 
\[K(A \restriction n) \geq 2\log(n) + \log\log(n) - d\]
for infinitely many $n \in \IN$; for the purposes of this article we will call such sets \emph{Kummer $K$-complex}. Again, we will show that there are Kummer $K$-complex 
regainingly approximable sets. 

Thus, in this sense, regainingly approximable sets can achieve  both maximal plain and maximal prefix-free complexity
infinitely often.

%

\begin{theorem}
	\label{theorem:KummerComplexRegaininglyApproximableSet}
	There exists a regainingly approximable set that is Kummer complex.
\end{theorem}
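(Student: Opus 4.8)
The plan is to combine Kummer's construction of a Kummer complex c.e.\ set with a regaining-approximability mechanism analogous to the one used in the proof of Theorem~\ref{theorem:high-Kolmogorov-complexity}. Recall Kummer's idea: one builds a c.e.\ set $A$ together with a partition of $\IN$ into consecutive blocks $I_0, I_1, I_2, \dots$, where block $I_e$ is used to diagonalize against the $e$-th candidate machine trying to compress initial segments of $A$; by placing enough elements into (a suitable sub-block of) $I_e$ one forces $C(A\restriction n)\geq 2\log n - d$ for some $n$ in that block. Crucially, Kummer's strategy is itself a left-approximation: $A$ is built by a computable enumeration, and the number $2^{-A}$ is strongly left-computable. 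What I want to show is that this enumeration can be arranged to be $\id_\IN$-good (in the sense of Definition~\ref{definition:rgood}), so that by Theorem~\ref{theorem:regaining-sets} the set $A$ is regainingly approximable.

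First I would recall or set up Kummer's construction in enough detail to see which elements get enumerated into $A$ and when. The key structural feature is that Kummer's requirements act on pairwise disjoint intervals $I_e$, and each requirement $\mathcal{R}_e$ enumerates only finitely many elements, all lying in $I_e$, and it acts at most once (after which it is permanently satisfied). Next I would interleave a regaining strategy: reserve infinitely many ``catch-up'' stages $t_0 < t_1 < \dots$; at stage $t_k$ we want all elements of $A$ below some threshold $n_k$ (with $n_k \to \infty$) to already have been enumerated. Between catch-up stages we allow the various $\mathcal{R}_e$ to act freely. The mechanism for guaranteeing catch-up is the standard one: whenever a requirement $\mathcal{R}_e$ wants to enumerate elements below the current threshold, we instead first enumerate those elements (they are finitely many and we know exactly which ones, since Kummer's strategy computes the set it wants to add), thereby restoring the invariant $\{0,\dots,n-1\}\cap A \subseteq \mathrm{Enum}(g)[n]$ for the relevant $n$; since each $\mathcal{R}_e$ acts only finitely often, only finitely many such delayed bursts of enumeration occur below any fixed threshold, so the invariant stabilizes below every $n$, which is exactly $\id_\IN$-goodness.

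The main obstacle I anticipate is the tension between the \emph{size} requirements of Kummer's diagonalization and the \emph{speed} requirements of regaining approximability. Kummer needs block $I_e$ to be long (of length roughly $2^{|v|}$ for the string $v$ being diagonalized against) and needs to dump many elements into it; but to keep the enumeration $\id_\IN$-good we must be sure these elements are enumerated by the time the stage counter catches up to the right-hand end of the block. The resolution should be to choose the blocks $I_e$ and the order of enumeration so that when $\mathcal{R}_e$ decides to act (which happens at some stage $s_e$ determined by the approximation to $C$), it enumerates its entire intended finite set into $A$ immediately, all at once, within a bounded number of stages; since $I_e$ is to the right of the region that has been ``frozen'' by the earlier catch-up stage, and since $\mathcal{R}_e$ acts only once, this localizes the damage. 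I would then verify: (1) each $\mathcal{R}_e$ is met, so $A$ is Kummer complex with the same constant $d$ as in Kummer's proof; (2) the enumeration $g$ built is computable and $\id_\IN$-good, because below every $n$ only finitely many enumeration events ever occur and they are all eventually flushed before stage $n$; hence by Theorem~\ref{theorem:regaining-sets}, $2^{-A}$ is regainingly approximable, so $A$ is a regainingly approximable set. A cleaner alternative worth checking first: since by Theorem~\ref{theorem:splitting}-style reasoning low c.e.\ sets are regainingly approximable, if Kummer's construction can be made to produce a \emph{low} Kummer complex set (or one can be found by a direct permitting/lowness argument combined with Kummer complexity), then the result follows immediately from Example~\ref{example:non-high_numbers} and Theorem~\ref{theorem:regaining-sets}; I would first investigate whether Kummer complexity is compatible with lowness, and only fall back on the hands-on interleaved construction if it is not.
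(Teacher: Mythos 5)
Your proposal sketches two routes, neither of which is the paper's, and neither of which you carry out; it is best read as a plan with one unresolved gap and one unverified but viable shortcut.

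Route (a), interleaving Kummer's construction with a regaining strategy, faces a timing obstacle you correctly identify but do not resolve. The issue is not whether each $\mathcal{R}_e$ acts only finitely often (it does), but \emph{when}: requirement $\mathcal{R}_e$ may sit idle for arbitrarily many stages waiting on the approximation to $C$, while the stage counter moves far past the right end of $I_e$. At that point no $n$ in or past $I_e$ can serve as a catch-up point until $\mathcal{R}_e$ has committed, and your sketch does not explain how to guarantee infinitely many catch-up stages survive the cumulative delays of all the requirements. A careful priority or length-of-agreement bookkeeping would be needed, making this considerably heavier than you indicate.

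Route (b), finding a low Kummer complex set and invoking Example~\ref{example:non-high_numbers}, does work, but you leave its load-bearing fact unchecked. Kummer complexity is equivalent at the c.e.\ degree level to array noncomputability (Downey--Hirschfeldt \cite[Section 16.1]{DH2010}), and there are low array noncomputable c.e.\ degrees, so low Kummer complex c.e.\ sets exist; combined with Example~\ref{example:non-high_numbers} and Theorem~\ref{theorem:regaining-sets} this finishes the proof. Had you verified that fact, this route would be valid, though it leans on substantial external machinery.

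The paper's argument is different from both and considerably shorter: it does not redo or even touch Kummer's construction. Take \emph{any} Kummer complex c.e.\ set $Z$, split it by Theorem~\ref{theorem:splitting} into disjoint regainingly approximable $X, Y$ with $X \cup Y = Z$, and prove the transfer estimate $C((X\cup Y)\restriction n) \leq \max\{C(X\restriction n), C(Y\restriction n)\} + d$ for c.e.\ $X,Y$ (Lemma~\ref{lemma:union-KolmogorovComplexity}, by describing the stage at which the slower of the two enumerations settles below $n$). Since $Z$ has $C(Z\restriction n)\geq 2\log n - d$ infinitely often, one of $X, Y$ must too. This buys the result from ingredients already in the paper, avoids any fresh priority construction, and, unlike the lowness route, requires no facts about array noncomputability; it also transfers verbatim to the prefix-free case (Theorem~\ref{theorem:KummerKComplexRegaininglyApproximableSet}).
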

We will use the following lemma in the proof. 
\begin{lem}
	\label{lemma:union-KolmogorovComplexity}
	For any c.e.~sets $X, Y \subseteq \IN$ there exists some $d \in \IN$ with
\begin{equation}
\label{eq:union-KolmogorovComplexity}
		C((X\cup Y)\restriction n) \leq \max\{C(X\restriction n), C(Y\restriction n)\} + d\tag{$\ddagger$}
\end{equation}
	for all $n \in \IN$.
\end{lem}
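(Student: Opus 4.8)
The plan is to avoid working with shortest programs for $X\restriction n$ and $Y\restriction n$ directly — concatenating them would only bound $C((X\cup Y)\restriction n)$ by the \emph{sum} of the two complexities — and instead to route everything through the ``settling times'' of fixed computable enumerations.

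First I would fix computable enumerations $(X_t)_t$ and $(Y_t)_t$ of $X$ and $Y$ by finite sets with $X_t\subseteq X_{t+1}$, $Y_t\subseteq Y_{t+1}$, and set $Z:=X\cup Y$, $Z_t:=X_t\cup Y_t$, so that $(Z_t)_t$ is a computable enumeration of $Z$. For $W\in\{X,Y,Z\}$ and $n\in\IN$ let $t_W(n):=\min\{t\colon W_t\restriction n=W\restriction n\}$, which is well-defined because $W\cap\{0,\dots,n-1\}$ is finite, and note $W_t\restriction n=W\restriction n$ for all $t\ge t_W(n)$. The two observations I would then establish are: (a) $C(\langle n,t_X(n)\rangle)\le^+C(X\restriction n)$ and symmetrically for $Y$ — witnessed by the machine that runs $U_1$ on its input to get a string $\tau$, sets $n:=|\tau|$, searches for the least $t$ with $X_t\restriction n=\tau$, and outputs $\langle n,t\rangle$, combined with optimality of $U_1$; and (b) $C(Z\restriction n)\le^+C(\langle n,\max\{t_X(n),t_Y(n)\}\rangle)$ — witnessed by the machine that on a description of $\langle n,t\rangle$ outputs $Z_t\restriction n$, using that for $t^\ast:=\max\{t_X(n),t_Y(n)\}$ one has $X_{t^\ast}\restriction n=X\restriction n$ \emph{and} $Y_{t^\ast}\restriction n=Y\restriction n$, hence $Z_{t^\ast}\restriction n=Z\restriction n$.

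The key step — and the whole point of the lemma — is then the trivial but crucial remark that $\max\{t_X(n),t_Y(n)\}$ is \emph{one of} $t_X(n)$ and $t_Y(n)$, so $C(\langle n,\max\{t_X(n),t_Y(n)\}\rangle)$ equals either $C(\langle n,t_X(n)\rangle)$ or $C(\langle n,t_Y(n)\rangle)$ and hence is at most $\max\{C(\langle n,t_X(n)\rangle),C(\langle n,t_Y(n)\rangle)\}$; by (a) this is $\le^+\max\{C(X\restriction n),C(Y\restriction n)\}$. Chaining the finitely many $\le^+$-estimates from (a), (b), and this remark produces a single constant $d$ with $C((X\cup Y)\restriction n)\le\max\{C(X\restriction n),C(Y\restriction n)\}+d$ for all $n$, as required. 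I do not expect a genuine obstacle here; the only thing to get right is the conceptual move in (b) — that the larger settling time settles the union — which is what lets us replace ``information about both $X\restriction n$ and $Y\restriction n$'' by ``a shortest program for whichever of the two numbers $\langle n,t_X(n)\rangle$, $\langle n,t_Y(n)\rangle$ is more complex.''
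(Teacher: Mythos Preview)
Your proof is correct and is essentially the same approach as the paper's: both fix computable enumerations of $X$ and $Y$, use their settling times, and exploit that whichever settles later already determines $Z\restriction n$. The only difference is presentational---the paper builds a single machine that on input $v0$ (resp.\ $v1$) simulates $U$, finds the settling time for $X$ (resp.\ $Y$), and outputs the corresponding approximation to $Z\restriction n$ directly, whereas you factor this through the intermediate pair $\langle n,t_W(n)\rangle$; the underlying idea is identical.
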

In the remainder of this section we write $U$ for the Turing machine $U_1$ that was used to define plain Kolmogorov complexity.
\begin{proof}[Proof of Lemma~\ref{lemma:union-KolmogorovComplexity}]
	Let $f_X\colon\IN\to\IN$ and $f_Y\colon\IN\to\IN$ be computable enumerations (in the sense defined in Section~\ref{section:sets}) of $X$ and $Y$, respectively.
	Then $f_Z\colon\IN\to\IN$ defined by
	\[ f_Z(n):=\begin{cases} 
		f_X(n/2) & \text{if } n \text{ is even}, \\
		f_Y((n-1)/2) & \text{if } n \text{ is odd},
	\end{cases}
	\]
	is a computable enumeration of $Z:=X\cup Y$.
	Define a computable function $g\colon \dom(g)\to\Sigma^*$ with $\dom(g)\subseteq\Sigma^*$ as follows
for every $v\in\Sigma^*$: 
	\begin{itemize}
		\item
		If $U(v)$ is defined and if there is a $t\in\IN$ such that 
		\[U(v) = \mathrm{Enum}(f_X)[t] \restriction |U(v)|,\]
		then let $t$ be the smallest such number and set 
		\[g(v0):= \mathrm{Enum}(f_Z)[2t] \restriction |U(v)|.\]
		Otherwise we leave $g(v0)$ undefined.
		\item
		If $U(v)$ is defined and if there is a $t\in\IN$ such that 
		\[U(v) = \mathrm{Enum}(f_Y)[t] \restriction |U(v)|,\]
		then let $t$ be the smallest such number and set 
		\[g(v1):= \mathrm{Enum}(f_Z)[2t] \restriction |U(v)|.\]
		Otherwise we leave $g(v1)$ undefined.	
	\end{itemize}
	
	For the verification, consider any $n \in \IN$.
	Let $s_X\in \IN$ be the smallest number with
	$\mathrm{Enum}(f_X)[s_X]\restriction n = X\restriction n$, and
	let $s_Y\in \IN$ be the smallest number with
	$\mathrm{Enum}(f_Y)[s_Y]\restriction n = Y\restriction n$. Then 
	$s:=\max\{s_X,s_Y\}$ satisfies
	$\mathrm{Enum}(f_Z)[2s]\restriction n = Z\restriction n$.
	
	If $s=s_X$, then for every $v\in\Sigma^*$ with $U(v)=X\restriction n$ we obtain $g(v0)=Z\restriction n$.
	Similarly, if $s=s_Y$, then for every $v\in\Sigma^*$ with $U(v)=Y\restriction n$ we obtain $g(v1)=Z\restriction n$.
	Thus, 
	\[C(Z\restriction n) \leq^+ C_g(Z\restriction n) \leq \max\{C(X\restriction n), C(Y\restriction n)\} + 1 .\qedhere\]
%
\end{proof}
Note that the proof works equally well with prefix-free complexity in place of plain complexity, leading to the following corollary.
\begin{cor}
	\label{lemma:union-KolmogorovComplexity_pf}
	For any c.e.~sets $A, B \subseteq \IN$ there exists some $d \in \IN$ with
	\[
		K((A\cup B)\restriction n) \leq \max\{K(A\restriction n), K(B\restriction n)\} + d
	\]
	for all $n \in \IN$.\qed
\end{cor}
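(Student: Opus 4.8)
The plan is to re-run the proof of Lemma~\ref{lemma:union-KolmogorovComplexity} essentially verbatim, now reading $U$ as the optimal universal \emph{prefix-free} machine $U_2$ (as the sentence preceding the corollary suggests) rather than as $U_1$, and to check the one extra point that this switch requires: that the auxiliary machine built in that proof still has prefix-free domain, so that it may be compared against $K$ instead of $C$.

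Concretely, I would first fix computable enumerations $f_A, f_B$ of $A, B$ and form the interleaved enumeration $f_Z$ of $Z:=A\cup B$ exactly as in the lemma. Then I would define $g\colon\dom(g)\to\Sigma^*$ by the same two clauses: for each $v$ on which $U$ halts, if $U(v)$ agrees with some stage-$t$ approximation of $A$ on its first $|U(v)|$ bits, set $g(v0):=\mathrm{Enum}(f_Z)[2t]\restriction|U(v)|$ for the least such $t$; and analogously for $B$ and $g(v1)$. The only genuinely new observation needed is that $g$ has prefix-free domain. This follows from the prefix-freeness of $\dom(U)=\dom(U_2)$: every string in $\dom(g)$ has the form $va$ with $v\in\dom(U)$ and $a\in\Sigma$, and two distinct such strings $v_1a_1\neq v_2a_2$ cannot be comparable, for if $v_1a_1$ were a prefix of $v_2a_2$ then $v_1$ would be a prefix of $v_2$, forcing $v_1=v_2$ by prefix-freeness of $\dom(U)$, and then $a_1=a_2$, a contradiction. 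Hence $g$ has prefix-free domain and we may write $K_g$ for $C_g$.

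The verification of correctness is then identical to that in the lemma, with $A,B$ in place of $X,Y$: given $n$, let $s_A,s_B$ be the least stages with $\mathrm{Enum}(f_A)[s_A]\restriction n=A\restriction n$ and $\mathrm{Enum}(f_B)[s_B]\restriction n=B\restriction n$, and set $s:=\max\{s_A,s_B\}$, so that $\mathrm{Enum}(f_Z)[2s]\restriction n=Z\restriction n$. If $s=s_A$ then $g(v0)=Z\restriction n$ for every $v$ with $U(v)=A\restriction n$, and if $s=s_B$ then $g(v1)=Z\restriction n$ for every $v$ with $U(v)=B\restriction n$. In either case $K_g(Z\restriction n)\leq\max\{K(A\restriction n),K(B\restriction n)\}+1$. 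By optimality of $U_2$ there is a constant $c$ with $K\leq K_g+c$ everywhere, so $d:=c+1$ witnesses the claim.

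I do not expect a real obstacle here: the single thing to get right is the prefix-freeness bookkeeping for $\dom(g)$, which is precisely what legitimizes replacing $C_g$ by $K_g$ and hence comparing with $K$ rather than $C$; the rest is word-for-word the earlier argument.
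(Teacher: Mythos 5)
Your proposal is correct and is exactly what the paper intends: the corollary is stated with no separate proof beyond the remark that the argument for Lemma~\ref{lemma:union-KolmogorovComplexity} carries over verbatim with the prefix-free universal machine in place of the plain one. Your explicit check that $\dom(g)\subseteq\{va\colon v\in\dom(U),\,a\in\Sigma\}$ inherits prefix-freeness from $\dom(U)$ is precisely the one point that needs verifying, and your argument for it is sound.
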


\begin{proof}[{Proof of Theorem~\ref{theorem:KummerComplexRegaininglyApproximableSet}}]
	Let $Z \subseteq \IN$ be a c.e.\ Kummer complex set.
	By Theorem~\ref{theorem:splitting} 
	there exist regainingly approximable sets ${X, Y \subseteq Z}$ with $X \cup Y = Z$.
	Fix a constant $d \in \IN$ that is large enough to witness the Kummer complexity~\eqref{eq:Kummercomplex} of~$Z$
	and such that~\eqref{eq:union-KolmogorovComplexity} is true for all $n$.
	If we let $c:=2d$, then we have, for the infinitely many $n$ satisfying~\eqref{eq:Kummercomplex}, that
	\[ 2\log(n) - d
	\leq C(Z \restriction n) 
	\leq \max\{C(X \restriction n), C(Y \restriction n) \} + d,
	\]
	hence,
	\[ \max\{C(X \restriction n), C(Y \restriction n) \} \geq 2\log(n) - c, \]
	and at least one of $X$ and $Y$ has to be Kummer complex.
\end{proof}

By applying the same proof to a set $Z$ that is Kummer $K$-complex instead of Kummer complex and by replacing the use of Lemma~\ref{lemma:union-KolmogorovComplexity} by that of Corollary~\ref{lemma:union-KolmogorovComplexity_pf}, we can also obtain the following result.
\begin{theorem}
 	\label{theorem:KummerKComplexRegaininglyApproximableSet}
	There exists a regainingly approximable set that is Kummer $K$-complex.\qed 
\end{theorem}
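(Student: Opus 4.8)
The plan is to mirror exactly the structure of the proof of Theorem~\ref{theorem:KummerComplexRegaininglyApproximableSet}, replacing every appearance of plain Kolmogorov complexity by prefix-free complexity. First I would invoke the result of Barmpalias and Downey~\cite[Theorem 1.10]{BD2017} to fix a c.e.\ set $Z\subseteq\IN$ that is Kummer $K$-complex, together with a constant $d_0\in\IN$ such that $K(Z\restriction n) \geq 2\log(n)+\log\log(n) - d_0$ for infinitely many $n$. Then I would apply Theorem~\ref{theorem:splitting} to split $Z$ into two disjoint regainingly approximable sets $X,Y\subseteq\IN$ with $X\cup Y=Z$.

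Next I would appeal to Corollary~\ref{lemma:union-KolmogorovComplexity_pf} — the prefix-free analogue of Lemma~\ref{lemma:union-KolmogorovComplexity}, whose proof the paper already notes ``works equally well with prefix-free complexity in place of plain complexity'' — to obtain a constant $d_1\in\IN$ with $K(Z\restriction n)\leq \max\{K(X\restriction n), K(Y\restriction n)\}+d_1$ for all $n$. Setting $d:=\max\{d_0,d_1\}$ and $c:=2d$, I would then argue: for each of the infinitely many $n$ witnessing the Kummer $K$-complexity of $Z$,
\[ 2\log(n)+\log\log(n)-d \leq K(Z\restriction n) \leq \max\{K(X\restriction n),K(Y\restriction n)\}+d, \]
so $\max\{K(X\restriction n),K(Y\restriction n)\}\geq 2\log(n)+\log\log(n)-c$ for infinitely many $n$. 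By the pigeonhole principle, at least one of $X$ or $Y$ satisfies this lower bound for infinitely many $n$, hence is Kummer $K$-complex; and it is regainingly approximable by choice of the splitting. This completes the proof.

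Since the paper explicitly states that the theorem follows ``by applying the same proof'' with Corollary~\ref{lemma:union-KolmogorovComplexity_pf} in place of Lemma~\ref{lemma:union-KolmogorovComplexity}, there is essentially no obstacle here; the only point requiring a moment's care is that the pigeonhole step only delivers one of $X,Y$ as the desired set, and we must be sure this set inherits regaining approximability — which it does automatically, since both $X$ and $Y$ were produced by Theorem~\ref{theorem:splitting}. The argument is therefore a routine transcription, and one could even state it as a one-line proof of the form ``Argue exactly as in the proof of Theorem~\ref{theorem:KummerComplexRegaininglyApproximableSet}, using Corollary~\ref{lemma:union-KolmogorovComplexity_pf} instead of Lemma~\ref{lemma:union-KolmogorovComplexity} and a Kummer $K$-complex set in place of a Kummer complex one.''
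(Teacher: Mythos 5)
Your proposal is correct and coincides with the paper's own argument: the paper explicitly states that Theorem~\ref{theorem:KummerKComplexRegaininglyApproximableSet} follows by running the proof of Theorem~\ref{theorem:KummerComplexRegaininglyApproximableSet} with a Kummer $K$-complex set and Corollary~\ref{lemma:union-KolmogorovComplexity_pf} in place of Lemma~\ref{lemma:union-KolmogorovComplexity}, which is precisely what you spelled out. Your additional remark about the pigeonhole step selecting one of $X,Y$ and both being regainingly approximable by the splitting is exactly the point the paper's template proof already handles.
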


\section{Arithmetical Properties}
\label{section:arithmetical-properties}

The class of regainingly approximable sets is not closed under union, according to 
Theorems~\ref{theorem:splitting} and~\ref{theorem:ce-not-regaining}. The following limited closure properties do hold, however, and will be useful in the proof
of the next theorem.

\begin{lem}\,
\label{lemma:union-dec--image}
\begin{enumerate}
\item
\label{lemma:union-dec--image-1}
The union of a regainingly approximable set and a decidable set is regainingly approximable.
\item
\label{lemma:union-dec--image-2}
If $A$ is a regainingly approximable set and $f\colon \IN\to\IN$ is a computable, nondecreasing
function, then the set
\[f(A):=\{n\in\IN \colon (\exists k\in A)\; n=f(k)\}\]
is regainingly approximable.
\end{enumerate}
\end{lem}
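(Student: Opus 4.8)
For part~\eqref{lemma:union-dec--image-1}, the plan is to work with the enumeration characterization from Theorem~\ref{theorem:regaining-sets}. Let $A$ be regainingly approximable and let $D$ be decidable; then $A\cup D$ is clearly c.e. By Theorem~\ref{theorem:regaining-sets} there is a computable $\id_\IN$-good enumeration $f$ of $A$. The idea is to build a new enumeration $g$ of $A\cup D$ that, at every stage, does whatever $f$ does while simultaneously inserting, as soon as feasible, every element of $D$ that is "small" relative to the current stage. Concretely, one can interleave: at stage $2t$ mimic the step $f(t)$, and at stage $2t+1$ enumerate the least element of $D\cap\{0,\dots,t\}$ not yet enumerated (if any), using decidability of $D$. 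Then for each of the infinitely many $n$ with $\{0,\dots,n-1\}\cap A\subseteq\mathrm{Enum}(f)[n]$ one checks, after a linear rescaling of the stage counter, that $\{0,\dots,n-1\}\cap(A\cup D)$ has been enumerated by $g$ by some stage $cn$ for a fixed constant $c$; applying the $r$-good direction of Theorem~\ref{theorem:regaining-sets} with $r(n):=cn$ then gives the result. (Alternatively, and perhaps more cleanly, one can argue directly with uniformly computable approximations from the left via Theorem~\ref{theorem:reg-approx-sets-some-approx}: take the approximation $(A_n)_n$ of $A$ witnessing condition~(2) and replace it by $(A_n\cup(D\cap\{0,\dots,n-1\}))_n$, which is again a uniformly computable approximation from the left and still has $A_n\cup(D\cap\{0,\dots,n-1\})=(A\cup D)\cap\{0,\dots,n-1\}$ for infinitely many $n$.)

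For part~\eqref{lemma:union-dec--image-2}, I would again use the enumeration characterization. First dispose of the trivial case where $f$ is eventually constant (then $f(A)$ is finite, hence decidable, hence regainingly approximable by Example~\ref{ex:dec-regaining}); so assume $f$ is unbounded. Let $h$ be a computable $\id_\IN$-good enumeration of $A$. Define an enumeration $g$ of $f(A)$ by translating through $f$: whenever $h$ enumerates $k$ into $A$ at stage $t$, have $g$ enumerate $f(k)$ at stage $t$ (composing with the repetition-removal trick of Remark~\ref{remark:without_repetitions} if one wants an honest enumeration). Now fix $n$ and let $n^\ast$ be large enough that $f(\{0,\dots,n^\ast-1\})\supseteq f(A)\cap\{0,\dots,n-1\}$ — such an $n^\ast$ exists and can be computed because $f$ is computable, nondecreasing and unbounded, and in fact $n^\ast$ can be taken to be a computable, nondecreasing, unbounded function of $n$. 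For the infinitely many $m$ witnessing $\id_\IN$-goodness of $h$, every element of $A\cap\{0,\dots,m-1\}$ is enumerated by $h$ before stage $m$; choosing such an $m\geq n^\ast(n)$ and running $g$ for the corresponding number of stages shows that $f(A)\cap\{0,\dots,n-1\}\subseteq\mathrm{Enum}(g)[m]$. So $g$ is $r$-good for the computable, nondecreasing, unbounded function $r$ obtained from the relation between $n$ and $m$, and Theorem~\ref{theorem:regaining-sets}(3) finishes the argument.

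The routine parts are the bookkeeping: verifying that the interleaved/translated enumerations are genuinely computable enumerations of the right sets, and tracking the linear (or at worst computable monotone) blow-up of the stage parameter so that one lands in condition~(3) rather than condition~(2) of Theorem~\ref{theorem:regaining-sets}. The only genuine subtlety I anticipate is in part~\eqref{lemma:union-dec--image-2}: one must ensure that the "catch-up" stages of the new enumeration $g$ are controlled by a function of $n$ alone and not, say, by the unknown values of $f$ on large arguments — this is exactly why one needs $f$ to be nondecreasing (so that covering $f(A)\cap\{0,\dots,n-1\}$ only requires knowing $A$ up to the computable bound $n^\ast(n)$), and it is the place where the hypothesis is really used. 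Monotonicity of $f$ is also what guarantees $f(A)$ is c.e. with a left-computable $2^{-f(A)}$ in the first place; without it the statement would fail.
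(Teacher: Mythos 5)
Your part~(1) matches the paper's proof almost exactly: both interleave the $\id_\IN$-good enumeration of $A$ with a brute-force enumeration of the decidable set (the paper uses $h(2n):=g(n)$ and $h(2n+1):=n+1$ if $n\in B$, else $0$), and both conclude that the resulting enumeration is $r$-good for $r(n)=2n$. Your alternative via uniformly computable approximations from the left is also fine and slightly cleaner.

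For part~(2), you have the right construction (push the $\id_\IN$-good enumeration of $A$ through $f$), but the final step has a genuine quantifier gap. You fix $n$, compute the threshold $n^\ast(n)$, and then choose a catch-up stage $m\geq n^\ast(n)$ of the enumeration of $A$. This gives $f(A)\cap\{0,\ldots,n-1\}\subseteq\mathrm{Enum}(g)[m]$, but now you need to bound that $m$ by a computable function $r(n)$ — and there is no reason the least catch-up stage $\geq n^\ast(n)$ should admit such a bound, since the set of catch-up stages is not computable. Worse, for the natural candidate $r(n):=\max\{m:f(m)\leq n\}$ the window $[n^\ast(n),r(n)]$ may be empty for $n$ not in the range of $f$ (e.g.\ $f(m)=2m$ and $n$ odd), so the argument cannot be repaired by merely choosing $r$ more carefully while keeping $n$ arbitrary. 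The paper sidesteps this by reversing the order of choice: it lets $B$ be the infinite set of catch-up stages $m$ of the enumeration of $A$, considers only the infinitely many $n\in f(B)$, and for $n=f(m)$ with $m\in B$ observes that $m\leq r(n):=\max\{m':f(m')\leq n\}$ holds automatically and $\{0,\ldots,n-1\}\cap f(A)\subseteq f(\{0,\ldots,m-1\}\cap A)\subseteq\mathrm{Enum}(g)[r(n)]$ by the monotonicity of $f$. So: choose $n$ as a function of the catch-up stage $m$, not $m$ as a function of $n$. Once that inversion is made, your argument goes through and coincides with the paper's.
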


\begin{proof}
Let $A\subseteq\IN$ be a regainingly approximable set.
By Theorem~\ref{theorem:regaining-sets} there exists a computable
$\id_\IN$-good enumeration $g\colon \IN\to\IN$ of $A$.
For the first assertion, let $B\subseteq\IN$ be a decidable set.
Then the function $h\colon \IN\to\IN$ defined by
$h(2n):=g(n)$ and
$h(2n+1):=n+1$ if $n\in B$, $h(2n+1):=0$ if $n\not\in B$, 
is a computable and $2n$-good enumeration of~${A\cup B}$.

For the second assertion, let $f\colon \IN\to\IN$ be a computable, nondecreasing function.
Then the function $h\colon \IN\to\IN$ 
defined by
\[ h(n):=\begin{cases}
     0 & \text{if } g(n)=0, \\
     f(g(n)-1)+1 & \text{if } g(n)>0,
     \end{cases} \]
for $n\in\IN$, is a computable enumeration of $f(A)$.
If $f$ is bounded then $f(A)$ is finite
and $h$ is trivially an $\id_\IN$-good enumeration of $f(A)$.
Thus assume that $f$ is unbounded.
Then the function $r\colon \IN\to\IN$ defined by
\[ r(n) := \max\{m\in\IN \colon f(m)\leq n\} , \]
for $n\in\IN$, is computable, nondecreasing, and unbounded. 
We claim that $h$ is an $r$-good enumeration of $f(A)$.
By assumption, the set
\[ B := \{m\in\IN \colon \{0,\ldots,m-1\}\cap A \subseteq \mathrm{Enum}(g)[m]\}  \]
is infinite. So is the set $C:=f(B)$. 
Consider numbers $n\in C$ and $m\in B$ with $f(m)=n$.
Then $m \leq r(n)$ and we obtain
\begin{align*}
\{0,\ldots,n-1\}\cap f(A)
&= \{0,\ldots,f(m)-1\}\cap f(A) \\
&\subseteq  f(\{0,\ldots,m-1\}\cap A) \\
&\subseteq  f(\mathrm{Enum}(g)[m]) \\
&= \mathrm{Enum}(h)[m] \\
&\subseteq \mathrm{Enum}(h)[r(n)].\qedhere
\end{align*}
\end{proof}

The class of regainingly approximable sets is also not closed under intersection, as the following theorem establishes.
\begin{theorem}
\label{theorem:intersection}
There exist regainingly approximable sets $A,B\subseteq\IN$ such that $A\cap B$
is not regainingly approximable.
\end{theorem}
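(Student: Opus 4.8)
The plan is to build computably enumerable sets $A$ and $B$ by effective enumerations $g_A,g_B$, to put $C:=A\cap B$, and to let $g_C$ be the enumeration of $C$ that enumerates $m$ at the first stage at which $m$ has already been enumerated into both $A$ and $B$. By Theorem~\ref{theorem:regaining-sets} it then suffices to guarantee, on the one hand, that both $g_A$ and $g_B$ are $\id_\IN$-good (so that $A$ and $B$ are regainingly approximable), and, on the other hand, to meet for every $e\in\IN$ the requirement
\[ \mathcal{R}_e\colon\quad \varphi_e\text{ total and increasing}\ \Longrightarrow\ \{0,\dots,n-1\}\cap C\not\subseteq\mathrm{Enum}(g_C)[\varphi_e(n)]\ \text{for all but finitely many }n, \]
because meeting all the $\mathcal{R}_e$ says precisely that $g_C$ is $r$-good for no computable increasing $r$, i.e.\ that $C$ is not regainingly approximable.

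The basic module for one $\mathcal{R}_e$ rests on the delayed-enumeration idea from the proof of Theorem~\ref{theorem:ce-not-regaining}: repeatedly pick a position $p$ and a larger ``target'' $q$, wait until $\varphi_e$ has converged and stayed increasing up to $q$ and until the current stage exceeds $\varphi_e(q)$, and only then enumerate $p$ into $C$; this forces $p$ to enter $C$ after stage $\varphi_e(n)$ for every $n\in(p,q]$, so none of those $n$ is a catch-up point, and iterating along an unbounded sequence of pairs $(p,q)$ defeats $\varphi_e$ on a cofinite set of $n$ — provided the intervals $(p,q]$ leave no gaps. The new feature is that the tardy witness $p$ is withheld from only \emph{one} of $A,B$ until the critical stage, being placed in the other immediately. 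When the delay is charged to $A$, the very act of waiting past $\varphi_e(q)$ pushes the next position to be chosen beyond $q$, opening a gap $(q,p_{\mathrm{next}}]$; the module closes that gap by a further tardy witness whose delay is charged instead to $B$ and whose target reaches past $p_{\mathrm{next}}$. So a single $\mathcal{R}_e$-module alternately charges delays to $A$ and to $B$, and — this is the point that keeps the positive side alive — it always picks its positions larger than the current stage, so that as soon as a delay charged to $A$ has actually been realised there is a block of positions $n$ at which every element of $A$ smaller than $n$ has already been enumerated by stage $n$; these are the $\id_\IN$-good positions for $A$, and symmetrically for $B$.

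The main obstacle is reconciling this with the fact that infinitely many $\mathcal{R}_e$ (those with $\varphi_e$ total and increasing) must act infinitely often: each time any of them realises an $A$-charged delay it destroys, for a while, the $\id_\IN$-goodness of the positions lying below it (via the committed-but-not-yet-enumerated elements there), and one must nevertheless secure infinitely many $\id_\IN$-good positions for $A$, and simultaneously for $B$. This calls for a priority organisation: place the $\mathcal{R}_e$-strategies on a priority tree, let each strategy guess whether each higher-priority strategy acts finitely or infinitely often, and re-select fresh (larger) positions whenever it is initialised, so that along the true path every strategy is initialised only finitely often. After that point each true-path strategy settles into its periodic behaviour, and since a strategy of lower priority that is initialised by it then only uses positions above everything seen so far, the $\id_\IN$-good positions that the strategy repeatedly creates — both for $A$, right after its $A$-charged delays, and for $B$, right after its $B$-charged delays — are never spoiled by anyone, yielding infinitely many settled positions for each of $A$ and $B$. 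I expect the verification of exactly this — that infinitely many settled positions survive for $A$ and $B$ at once, rather than the $A$-settled ones being ruined by $B$-strategies or conversely — to be the technically most demanding step; granting it, one checks routinely that $C$ is indeed $A\cap B$ enumerated by $g_C$, that the construction is computable, and that each $\mathcal{R}_e$ is satisfied, which completes the proof.
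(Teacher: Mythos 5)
The central difficulty of your construction is exactly the one you flag at the end and then pass over with ``granting it'': that is not a verification step you can defer, it is the whole theorem. Recall that for $A$ to be regainingly approximable we need infinitely many positions $n$ with $\{0,\ldots,n-1\}\cap A\subseteq \mathrm{Enum}(g_A)[n]$, i.e.\ positions at which no element below $n$ is still committed-but-unenumerated; and likewise for $B$. Now take two requirements $\mathcal{R}_{e_0}$ and $\mathcal{R}_{e_1}$ with $\varphi_{e_0},\varphi_{e_1}$ total and increasing. Both must act infinitely often, so both sit on the true path with infinitary outcome, and on the tree $\mathcal{R}_{e_1}$ lives below the $\infty$-outcome of $\mathcal{R}_{e_0}$ and is \emph{not} reinitialised each time $\mathcal{R}_{e_0}$ acts. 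Hence nothing in the priority machinery prevents $\mathcal{R}_{e_0}$ from holding an $A$-charged delay at some $p_0$ during a stage interval $I_0$ while $\mathcal{R}_{e_1}$ simultaneously holds a $B$-charged delay at some $p_1$ during an interval $I_1$ with $I_0\cap I_1\neq\emptyset$; during the overlap, positions above $\max(p_0,p_1)$ are catch-up points for neither side. The lengths of these intervals are governed by how slowly $\varphi_{e_0}$ and $\varphi_{e_1}$ reveal themselves, so you have no control over their phase relation; there is no reason the overlaps should not cover a cofinite set of positions, which would make neither $A$ nor $B$ regainingly approximable. Any fix (a global ``side'' schedule with forced early releases, sequentialising the requirements, witness-sharing between requirements, $\ldots$) has to contend with the dual constraint that each $\mathcal{R}_e$ with $\varphi_e$ total must still block a \emph{cofinite} set of $n$, and the fresh-position convention you use opens gaps $(q,p_{\mathrm{next}}]$ whose closure you outline for one requirement in isolation but not in the presence of the other infinitary requirements. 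As written the proposal is therefore incomplete in a way that is not merely technical.

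The paper avoids all of this by a short reduction rather than a direct construction. Take, by Theorem~\ref{theorem:ce-not-regaining}, a c.e.\ set $\widetilde{C}$ that is not regainingly approximable; split it by Theorem~\ref{theorem:splitting} into disjoint regainingly approximable $\widetilde{A},\widetilde{B}$ with $\widetilde{A}\cup\widetilde{B}=\widetilde{C}$; then set $A:=(2\cdot\widetilde{A})\cup(2\IN+1)$ and $B:=(2\cdot\widetilde{B}+1)\cup(2\IN)$. Both are regainingly approximable by Lemma~\ref{lemma:union-dec--image}(\ref{lemma:union-dec--image-1}) and (\ref{lemma:union-dec--image-2}), since padding with a decidable set and pushing through a computable nondecreasing map preserve the property, while $A\cap B=(2\cdot\widetilde{A})\cup(2\cdot\widetilde{B}+1)$ maps under $n\mapsto\lfloor n/2\rfloor$ onto $\widetilde{C}$, so by Lemma~\ref{lemma:union-dec--image}(\ref{lemma:union-dec--image-2}) again $A\cap B$ cannot be regainingly approximable. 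This route also shows where the real work was already done: the only genuine construction needed is Theorem~\ref{theorem:ce-not-regaining}, and everything else is bookkeeping with lemmas that the paper has in any case. If you want to pursue the direct construction, you would essentially be re-proving Theorem~\ref{theorem:ce-not-regaining} under the much harsher constraint of simultaneously keeping two sets regainingly approximable, which is substantially harder than what that theorem asks for on its own.
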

\begin{proof}
For natural numbers $a,b$ and a set $D\subseteq\IN$ we write $(a\cdot D + b)$ for the
set
\[ (a\cdot D + b) :=\{ n\in\IN \colon (\exists d\in D)\; n=a\cdot d + b  \}  \]
and $(a\cdot D)$ for the set $(a\cdot D + 0)$.
By Theorem~\ref{theorem:ce-not-regaining} there exists a c.e.~set $\widetilde{C}\subseteq\IN$
that is not regainingly approximable.
By Theorem~\ref{theorem:splitting} there exist two disjoint, regainingly approximable sets
$\widetilde{A},\widetilde{B}\subseteq\IN$ with $\widetilde{A} \cup\widetilde{B} = \widetilde{C}$.
By Lemma~\ref{lemma:union-dec--image} the sets 
\[ A:=(2\cdot \widetilde{A}) \cup (2\cdot\IN+1) \ \text{ and } \ 
   B:=(2\cdot\widetilde{B}+1) \cup (2\cdot \IN)  \]
are regainingly approximable.
We claim that their intersection 
\[A\cap B = (2\cdot \widetilde{A}) \cup (2\cdot \widetilde{B}+1)\]
is not regainingly approximable. To see this, let $g\colon \IN\to\IN$ be defined by $g(n)= \lfloor n/2 \rfloor$ for all $n\in \IN$.
We observe ${\widetilde{C}=g(A\cap B)}$.
Thus, if $A\cap B$ were a regainingly approximable set, then
so would be $\widetilde{C}$ according to Lemma~\ref{lemma:union-dec--image}~(\ref{lemma:union-dec--image-2}), a contradiction.
\end{proof}


We return to the study of regainingly approximable numbers.
\begin{cor}\,
		\label{cor:slc-regaining-not-regaining}
		\begin{enumerate}
			\item\label{cor:slc-regaining-not-regaining-1}
			There exists a strongly left-computable number that is not regainingly approximable.
			\item\label{cor:slc-regaining-not-regaining-2}
			There exists a strongly left-computable number that is regainingly approximable but not computable.
		\end{enumerate}
	\end{cor}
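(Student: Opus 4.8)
The plan is to extract both parts from the machinery already developed, in particular from Theorem~\ref{theorem:ce-not-regaining} and the structural results about regainingly approximable sets. For part~(\ref{cor:slc-regaining-not-regaining-1}), I would take the c.e.\ set $A\subseteq\IN$ supplied by Theorem~\ref{theorem:ce-not-regaining} that is not regainingly approximable, and simply consider the number $2^{-A}$. By the very definition of a regainingly approximable set, $2^{-A}$ is not a regainingly approximable number; and since $A$ is c.e., $2^{-A}$ is strongly left-computable by definition of that notion. Thus $2^{-A}$ is the desired witness for~(\ref{cor:slc-regaining-not-regaining-1}).

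For part~(\ref{cor:slc-regaining-not-regaining-2}) I would invoke Theorem~\ref{theorem:splitting} (or alternatively Corollary~\ref{cor:regaining-not-decidable}): starting from an undecidable c.e.\ set $C$, split it into two disjoint regainingly approximable sets $A,B$ with $C=A\cup B$. Since $C$ is undecidable, at least one of $A,B$ is undecidable; without loss of generality say $A$. Then $A$ is a regainingly approximable set that is not decidable, so $2^{-A}$ is a regainingly approximable number. It remains to observe that $2^{-A}$ is not computable: a real of the form $2^{-A}$ with $A$ c.e.\ is computable if and only if $A$ is decidable (this is the standard fact that strongly left-computable numbers with computable underlying set are exactly those with decidable $A$; one direction is immediate, and for the other, if $2^{-A}$ is computable then one can decide membership in $A$ by approximating $2^{-A}$ to sufficient precision). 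Hence $2^{-A}$ is strongly left-computable, regainingly approximable, and not computable, which is exactly~(\ref{cor:slc-regaining-not-regaining-2}).

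I do not anticipate any real obstacle here: both parts are essentially a matter of transporting the set-theoretic results of the previous sections through the correspondence $A\leftrightarrow 2^{-A}$, using that ``$2^{-A}$ is a regainingly approximable number'' is by definition ``$A$ is a regainingly approximable set'' for c.e.~$A$. The only point requiring a sentence of justification is the equivalence between computability of $2^{-A}$ and decidability of $A$ for c.e.\ $A$, which is classical.
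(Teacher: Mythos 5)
Your proposal is correct and matches the paper's argument: part~(1) is $2^{-A}$ for the non-regainingly-approximable c.e.~set of Theorem~\ref{theorem:ce-not-regaining}, and part~(2) is $2^{-A}$ for the undecidable regainingly approximable set of Corollary~\ref{cor:regaining-not-decidable}, together with the classical fact that $2^{-A}$ is computable iff $A$ is decidable. The only cosmetic difference is that you appeal directly to the definition of regainingly approximable sets where the paper also cites Theorem~\ref{theorem:regaining-sets}, which is indeed not strictly needed here.
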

\begin{proof}
The first assertion follows from Theorem~\ref{theorem:ce-not-regaining}
and Theorem~\ref{theorem:regaining-sets}.
The second assertion follows from Corollary~\ref{cor:regaining-not-decidable}
and Theorem~\ref{theorem:regaining-sets}
and from the well-known fact that, for any $A\subseteq \IN$,
the number $2^{-A}$ is computable if and only if
$A$ is decidable.
\end{proof}

From now on, let $\leqS$ denote Solovay~\cite{Sol1975} reducibility (see, for instance, Downey and Hirschfeldt~\cite[Section~9.1]{DH2010}) between left-computable numbers. The following result states that the regainingly approximable numbers are closed downwards with respect to this reducibility.
\begin{prop}
\label{prop:reg-Solovay}
Let $\beta$ be a regainingly approximable number, 
and let $\alpha$ be a left-computable number with $\alpha \leqS \beta$.
Then $\alpha$ is regainingly approximable as well.
\end{prop}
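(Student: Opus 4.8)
The plan is to use the standard characterization of Solovay reducibility in terms of approximations. Recall that $\alpha \leqS \beta$ for left-computable $\alpha,\beta$ means there are computable nondecreasing sequences $(x_n)_n$ and $(y_n)_n$ of rationals converging to $\alpha$ and $\beta$ respectively, a constant $c \in \IN$, and (equivalently, by the usual reformulation) we may arrange that
\[ \alpha - x_n \leq c \cdot (\beta - y_n) \]
for all $n \in \IN$. Since $\beta$ is regainingly approximable, by Proposition~\ref{prop:index-function} we are free to pick the approximation $(y_n)_n$ of $\beta$ used in the Solovay reduction to be \emph{any} computable nondecreasing approximation we like; in particular we may take one with $\beta - y_n < 2^{-n}$ for infinitely many $n$. (If the Solovay reduction is given with respect to some other approximation $(\tilde y_n)_n$ of $\beta$, first apply Proposition~\ref{prop:index-function} to obtain a computable increasing $r$ with $\beta - \tilde y_{r(n)} < 2^{-n}$ for infinitely many $n$, and then replace $(\tilde y_n)_n$ by $(\tilde y_{r(n)})_n$, adjusting $(x_n)_n$ by passing to a subsequence along $r$ as well; the inequality $\alpha - x_{r(n)} \le c(\beta - \tilde y_{r(n)})$ is preserved since both sequences are nondecreasing.)

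With such a $(y_n)_n$ fixed, the computation is immediate: for the infinitely many $n$ with $\beta - y_n < 2^{-n}$ we get
\[ \alpha - x_n \leq c\,(\beta - y_n) < c \cdot 2^{-n}. \]
This is $\alpha - x_n < 2^{-(n - \log c)}$, so along a computable shift of the index (or, cleaner, by reindexing $a_n := x_{n + \lceil \log c \rceil}$) we obtain a computable nondecreasing rational sequence converging to $\alpha$ that catches up to within $2^{-n}$ infinitely often. Alternatively, one can invoke Proposition~\ref{prop:equivalent-conditions}(4) directly: the function $f(n) := n - \lceil \log_2 c \rceil$ (truncated at $0$, made nondecreasing and unbounded) witnesses that $\alpha$ is regainingly approximable, since $\alpha - x_n < 2^{-f(n)}$ for infinitely many $n$. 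Either way, $\alpha$ is regainingly approximable.

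I do not anticipate a serious obstacle here; the one point requiring a little care is the bookkeeping in the first paragraph, namely making sure that the approximation of $\beta$ appearing in the Solovay reduction can be taken to be a ``regaining'' one. This is exactly what Proposition~\ref{prop:index-function} buys us — the regaining behaviour of $\beta$ is visible through \emph{every} computable nondecreasing approximation — so the argument goes through cleanly. The only mild subtlety is that the standard definition of $\leqS$ is often phrased via a partial computable function $\varphi$ with $\alpha - x_n \le \varphi$ applied to a witness rather than via the two-sided inequality above; but the equivalence of these formulations for left-computable reals is classical (Solovay; see also Downey--Hirschfeldt), so we may simply cite it and work with the inequality $\alpha - x_n \le c(\beta - y_n)$ throughout.
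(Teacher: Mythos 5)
Your proof is correct and takes essentially the same approach as the paper's: both transport a regaining approximation of $\beta$ through the Solovay reduction and then absorb the multiplicative constant. The only cosmetic difference is which classical formulation of $\leqS$ is invoked — the paper uses the computable function on rationals $q<\beta$ and absorbs the constant upfront by choosing a $\beta$-approximation accurate to $2^{-n-c}$ via Proposition~\ref{prop:equivalent-conditions}, while you use the equivalent approximation-pair formulation together with Proposition~\ref{prop:index-function} and absorb the constant afterwards by reindexing.
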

\begin{proof}
Let $f\colon \{q\in \IQ \colon q < \beta\}\to\IQ$
be a computable function and ${c\in\IN}$~be a number such that,
for all $q\in\{q\in \IQ \colon q < \beta\}$, we have $f(q)<\alpha$ and ${\alpha - f(q) < 2^c \cdot (\beta - q)}$.
By Proposition~\ref{prop:equivalent-conditions} there exists a 
computable and increasing sequence $(b_n)_n$ of rational numbers converging to $\beta$
such that $\beta - b_n < 2^{-n-c}$ for infinitely many~${n\in\IN}$.
The sequence $(a_n)_n$ defined by
\[ a_n := \max\{f(b_i) \colon 0 \leq i \leq n\} \]
is a nondecreasing, computable sequence of rational numbers converging to $\alpha$.
For the infinitely many $n$ with $\beta - b_n < 2^{-n-c}$ we obtain
\[ \alpha - a_n \leq \alpha - f(b_n) < 2^c \cdot(\beta - b_n) < 2^{-n} . \qedhere \]
\end{proof}
\begin{cor}\,
\begin{enumerate}
\item
If the sum of two left-computable numbers is regainingly approximable,
then both of them are regainingly approximable.
\item
The sum of a regainingly approximable number
and a computable number is again a regainingly approximable number.
\end{enumerate}
\end{cor}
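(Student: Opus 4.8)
The plan is to obtain both statements from Proposition~\ref{prop:reg-Solovay} together with the elementary observation that, for any left-computable reals $\alpha$ and $\beta$, one has $\alpha \leqS \alpha + \beta$ (with Solovay constant $0$). Moreover, the second statement will drop out of the first by a small algebraic trick, so the only real work is a short Solovay reduction.

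For the first statement, suppose $\alpha$ and $\beta$ are left-computable and $\alpha + \beta$ is regainingly approximable. I would fix computable, nondecreasing sequences of rationals $(a_n)_n \to \alpha$ and $(b_n)_n \to \beta$, arranging (w.l.o.g., e.g.\ by replacing $a_n$ with $\alpha - 2^{-n}$ in the rational case) that $a_n < \alpha$ and $b_n < \beta$ for all~$n$. Define a computable $f\colon\{q\in\IQ \colon q<\alpha+\beta\}\to\IQ$ by searching, on input~$q$, for the least~$n$ with $a_n+b_n>q$ — which exists because $a_n+b_n\to\alpha+\beta>q$ — and outputting $f(q):=a_n$. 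Then $f(q)=a_n<\alpha$ and
\[
  \alpha - f(q) \leq (\alpha - a_n) + (\beta - b_n) = (\alpha+\beta) - (a_n+b_n) < (\alpha+\beta) - q ,
\]
so $\alpha \leqS \alpha + \beta$; by the symmetric argument also $\beta \leqS \alpha + \beta$. Since $\alpha$ and $\beta$ are left-computable and $\alpha+\beta$ is regainingly approximable, Proposition~\ref{prop:reg-Solovay} gives that both $\alpha$ and $\beta$ are regainingly approximable.

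For the second statement, let $\alpha$ be regainingly approximable and $\gamma$ computable. Then $\alpha+\gamma$ is left-computable (a left-computable number plus a computable one) and $-\gamma$ is left-computable (it is even computable), and their sum $(\alpha+\gamma)+(-\gamma)=\alpha$ is regainingly approximable by hypothesis. Applying the first statement to the pair $\alpha+\gamma$ and $-\gamma$ shows that both summands are regainingly approximable; in particular $\alpha+\gamma$ is.

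I do not expect a genuine obstacle here: the only points needing a little care are that the search defining~$f$ terminates (because $(a_n+b_n)_n$ converges to $\alpha+\beta$, which lies strictly above~$q$) and that the approximating sequences can be taken to stay strictly below their limits so that $f(q)<\alpha$ holds exactly; both are routine, and everything else is bookkeeping around Proposition~\ref{prop:reg-Solovay}.
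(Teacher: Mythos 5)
Your argument is correct. For the first assertion you take exactly the paper's route: establish $\alpha\leqS\alpha+\beta$ and $\beta\leqS\alpha+\beta$ (you spell out the reduction with constant $0$ that the paper treats as folklore) and then invoke Proposition~\ref{prop:reg-Solovay}. For the second assertion the paper appeals to a separate fact, namely that adding a computable number does not change the Solovay degree, so that $\alpha+\gamma\leqS\alpha$ and Proposition~\ref{prop:reg-Solovay} applies directly; you instead deduce it from the already-proved first assertion via the decomposition $(\alpha+\gamma)+(-\gamma)=\alpha$, using that $-\gamma$ is computable hence left-computable and that $\alpha+\gamma$ is left-computable. This is a genuinely different, and somewhat more self-contained, derivation: it needs no extra input about Solovay degrees beyond what the first part already established, at the small cost of checking that $\alpha+\gamma$ is left-computable. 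Both routes are equally valid and short.
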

\begin{proof}
The first assertion follows from Proposition~\ref{prop:reg-Solovay} and from the fact
that for any two left-computable numbers $\alpha$, $\beta$ one has $\alpha \leqS \alpha+\beta$
and $\beta \leqS \alpha+\beta$.
Since adding a computable number to a left-computable number does not change its Solovay degree,
the second assertion follows from Proposition~\ref{prop:reg-Solovay} as well.
\end{proof}

\begin{cor}
\label{cor:sum}
Every strongly left-computable number can be written as the sum of
two strongly left-computable numbers that are regainingly approximable.
\end{cor}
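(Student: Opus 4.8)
This corollary should follow essentially immediately from the splitting theorem, Theorem~\ref{theorem:splitting}, together with the fact that $2^{-(\cdot)}$ is additive over disjoint unions. So the plan is: start with a strongly left-computable number $x$, which by definition means $x \in [0,1]$ and $x = 2^{-C}$ for some c.e.\ set $C \subseteq \IN$. Apply Theorem~\ref{theorem:splitting} to obtain two \emph{disjoint}, regainingly approximable sets $A, B \subseteq \IN$ with $C = A \cup B$.

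The key (and only nonroutine) observation is that the disjointness of $A$ and $B$ yields
\[ 2^{-C} \;=\; \sum_{c \in C} 2^{-(c+1)} \;=\; \sum_{a \in A} 2^{-(a+1)} + \sum_{b \in B} 2^{-(b+1)} \;=\; 2^{-A} + 2^{-B}, \]
which is immediate from the series definition of $2^{-(\cdot)}$ once $A \cap B = \emptyset$ is used to split the index set. Since $A$ and $B$ are c.e., the numbers $2^{-A}$ and $2^{-B}$ are strongly left-computable; and since $A$ and $B$ are regainingly approximable sets, the numbers $2^{-A}$ and $2^{-B}$ are regainingly approximable by definition. Hence $x = 2^{-A} + 2^{-B}$ is a decomposition of the required form.

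\textbf{Main obstacle.} There is essentially none: the entire content is packaged into Theorem~\ref{theorem:splitting}, and the remaining step is the bookkeeping identity above. If one additionally wanted the decomposition to be produced \emph{uniformly} in a c.e.\ index for $C$, one would instead invoke the effective splitting algorithm of Section~\ref{section:SplittingRegularReals} in place of Theorem~\ref{theorem:splitting}, but for the existential statement as phrased this is not needed.
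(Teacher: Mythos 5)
Your proof is correct and matches the paper's argument exactly: the paper also derives this directly from Theorem~\ref{theorem:splitting}, with the additivity identity $2^{-C}=2^{-A}+2^{-B}$ for disjoint $A,B$ and the definition of regainingly approximable set doing the rest. The only cosmetic difference is that the paper cites Theorem~\ref{theorem:regaining-sets} alongside Theorem~\ref{theorem:splitting}, but as you note, the definition of regainingly approximable set already gives what is needed there.
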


\begin{proof}
This follows from Theorem~\ref{theorem:splitting} together with Theorem~\ref{theorem:regaining-sets}.
\end{proof}

\begin{cor}\label{cor:from-sum}
There exist two strongly left-computable and regainingly approximable numbers whose
sum is not regainingly approximable.
\end{cor}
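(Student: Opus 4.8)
The plan is to combine the non-regainingly-approximable c.e.\ set from Theorem~\ref{theorem:ce-not-regaining} with the splitting result of Theorem~\ref{theorem:splitting}, exactly in the spirit of Corollaries~\ref{cor:sum} and~\ref{cor:slc-regaining-not-regaining}. Concretely, I would start by fixing, via Theorem~\ref{theorem:ce-not-regaining}, a c.e.\ set $C\subseteq\IN$ that is not regainingly approximable; by Theorem~\ref{theorem:regaining-sets} (or directly by the definition of a regainingly approximable set) this means that the strongly left-computable number $2^{-C}$ is not regainingly approximable. Then Theorem~\ref{theorem:splitting} yields two disjoint, regainingly approximable sets $A,B\subseteq\IN$ with $C=A\cup B$. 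Since $A$ and $B$ are c.e., the numbers $2^{-A}$ and $2^{-B}$ are strongly left-computable, and since $A$ and $B$ are regainingly approximable sets, $2^{-A}$ and $2^{-B}$ are regainingly approximable numbers.

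The only remaining point is to observe that $2^{-A}+2^{-B}=2^{-C}$. This is immediate from the disjointness of $A$ and $B$: writing out the defining series,
\[
 2^{-A}+2^{-B}=\sum_{a\in A}2^{-(a+1)}+\sum_{b\in B}2^{-(b+1)}=\sum_{n\in A\cup B}2^{-(n+1)}=\sum_{n\in C}2^{-(n+1)}=2^{-C}.
\]
Note that no subtlety about binary carries arises here, because we are simply adding two absolutely convergent series indexed over disjoint sets of naturals, not adding binary strings. Since $2^{-C}$ is not regainingly approximable, $2^{-A}$ and $2^{-B}$ are the two strongly left-computable, regainingly approximable numbers whose sum fails to be regainingly approximable, as desired.

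I do not expect any real obstacle: the entire argument is a short bookkeeping combination of earlier results. The one thing to be careful about is the definitional translation between ``regainingly approximable \emph{set}'' and ``regainingly approximable \emph{number}'' (so that the conclusion is phrased in terms of numbers as the corollary demands), and making explicit that strong left-computability of $2^{-A}$ and $2^{-B}$ is inherited from the c.e.\ ness of $A$ and $B$ guaranteed by Theorem~\ref{theorem:splitting}.
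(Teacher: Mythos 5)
Your proof is correct and follows essentially the same route as the paper's: the paper cites Corollary~\ref{cor:slc-regaining-not-regaining}~(\ref{cor:slc-regaining-not-regaining-1}) and Corollary~\ref{cor:sum}, which are themselves derived from Theorems~\ref{theorem:ce-not-regaining}, \ref{theorem:splitting}, and \ref{theorem:regaining-sets}, so you have merely inlined those two corollaries. The explicit observation that disjointness of $A$ and $B$ gives $2^{-A}+2^{-B}=2^{-C}$ is the same fact the paper uses implicitly via Corollary~\ref{cor:sum}.
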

\begin{proof}
According to Corollary~\ref{cor:slc-regaining-not-regaining}~(\ref{cor:slc-regaining-not-regaining-1}),
there exists a strongly left-com\-pu\-table number~$\gamma$ that is not regainingly approximable.
Then, according to Corollary~\ref{cor:sum}, there exist two strongly left-computable and
regainingly approximable numbers $\alpha$, $\beta$ with ${\alpha+\beta=\gamma}$ that witness the truth of the assertion.
\end{proof}

Corollary~\ref{cor:sum} raises the question whether every left-computable number
can be written as the sum of two regainingly approximable numbers; the answer is no.
This follows from Proposition~\ref{prop:inf-often-K-trivial},
from the fact that there exist Martin-L\"of random left-computable numbers, and from
the result of Downey, Hirschfeldt, Nies~\cite[Corollary 3.6]{DHN2002}
that the sum of two left-computable numbers that are not Martin-L\"of random is again
not Martin-L\"of random.

\section{Splitting Regular Reals}
\label{section:SplittingRegularReals}

In Theorem~\ref{theorem:splitting} we have seen that 
for any c.e.~set $C\subseteq\IN$ there exist two disjoint, regainingly approximable
sets $A,B\subseteq\IN$ with $C=A\cup B$.
That result left open whether this splitting can be obtained effectively. We now give a positive answer by presenting an according algorithm in the following lemma. We will formulate this splitting algorithm in a more general form so that we may apply it not only to the class of c.e.\ sets, but in fact to a larger class of objects, namely to the regular reals as defined by Wu~\cite{Wu2005}.

For a function $f\colon \IN\to\IN$ and a set $S\subseteq\IN$ from now on we will write 
\[ f^{-1}S:=\{i\in\IN\colon f(i)\in S\} . \]

\begin{lem}
\label{lemma:splitting}
There is an algorithm which,
given a function $f\colon \IN\to\IN$ with the property that $f^{-1}\{n+1\}$ is finite for every $n$,
computes two functions $g,h\colon \IN\to\IN$ such that for every $n$
\begin{equation}
  |g^{-1}\{n+1\}| + |h^{-1}\{n+1\}| = |f^{-1}\{n+1\}| \tag{$\ast$}
\end{equation}
and such that there exists an increasing sequence $(S_i)_i$ with
\begin{eqnarray*}
    && g^{-1}\{1,\ldots,S_i\}\subseteq \{0,\ldots,S_i-1\} \text{ for all even } i \text{ and } \\
    && h^{-1}\{1,\ldots,S_i\}\subseteq \{0,\ldots,S_i-1\}  \text{ for all odd } i .
\end{eqnarray*}
\end{lem}

\begin{proof}
Let a function $f$ as in the statement be given. We will describe an algorithm that works in stages to define the desired functions~$g$ and~$h$, as well as a function $s\colon\IN\times\IN\to\IN$. We write $s_i[t]$ for $s(i,t)$ to express the intuition that $s_i[t]$ is our current guess for~$S_i$ at stage~$t$.

\medskip

We begin with a high level overview of the proof strategy:
Whenever $f$~enumerates a number~$n$ at stage~$t$, we need to let either $g$ or $h$ enumerate it. To decide between the two, we follow a greedy strategy in the following sense: for every~$t$ the sequence $(s_i[t])_i$ will be increasing; if at stage $t$ the largest number $i$ with $s_i[t]\leq n$ is even then we let $g$ enumerate $n$, otherwise $h$. 
As we will show this is sufficient to prove the statement of the lemma; in particular, for every $i$, $(s_i[t])_t$ will turn out to be nondecreasing and eventually constant, allowing us to define ${S_i:=\lim_{t\to\infty} s_i[t]}$.

%
%

\medskip

With this we are ready to give the formal proof. 
\smallskip

At stage $0$ we define $s_i[0]:=i$ for all $i\in\IN$.

\smallskip

At a stage $t+1$ with $t\in\IN$ we proceed as follows:

\begin{itemize}
	\item If $f(t)=0$ then we set $g(t):=0$ and $h(t):=0$. Intuitively speaking this means that $f$ does not enumerate any new element at stage~$t$, and that we therefore do not let $g$ and $h$ enumerate any new elements either.
	We also set $s_i[t+1]:=s_i[t]$ for all $i\in\IN$.
	
	\item If $f(t)>0$ then this means $n:=f(t)-1$ is enumerated by $f$ at stage~$t$. We want to let $n$ be enumerated by either $g$ or by $h$, as follows:
	If
	\[ k_t := \min\{j\in\IN \colon s_j[t] > n\} \]
	is even then we set $g(t):=0$ and $h(t):=n+1$
	(which means that $n$ is enumerated by $h$);
	otherwise, if $k_t$ is odd, then we set $g(t):=n+1$ and $h(t):=0$
	(which means that $n$ is enumerated by $g$).
	Furthermore, we define $s_i[t+1]$ for all $i\in\IN$ by
	\[ s_i[t+1] := \begin{cases}
		s_i[t] & \text{if } i \leq k_t, \\
		s_i[t] + t + 1 & \text{if } k_t < i.
	\end{cases} \]
\end{itemize}
This ends the description of stage $t$ and of the algorithm; we proceed with the verification. 

\smallskip

\emph{Claim 1.} For every $t\in\IN$, the sequence $(s_i[t])_i$ is increasing.

\smallskip

\emph{Proof.} We prove the claim by induction over $t$; it clearly holds for~$t=0$.
Fix any~$t\in\IN$ and assume that $(s_i[t])_i$ is increasing.
If $f(t)=0$ then $(s_i[t+1])_i$ is identical to $(s_i[t])_i$ and hence it is increasing as well. Thus assume that $f(t)>0$; then $k_t$ is defined
by the induction hypothesis and we can observe that $(s_i[t])_i$ is increasing:
\begin{itemize}
\item
For any $i<j\leq k_t$ we have
\[s_i[t+1] = s_i[t] < s_j[t] = s_j[t+1].\]
\item
For any $i\leq k_t < j$ we have
\[s_i[t+1] = s_i[t] < s_j[t] < s_j[t] + t + 1 = s_j[t+1].\]
\item
For any $k_t< i<j$ we have
\[s_i[t+1] = s_i[t] + t + 1 < s_j[t] + t + 1 = s_j[t+1].\]
\end{itemize}
This proves the claim.\hfill$\Diamond$

\smallskip

By Claim 1, for every $t\in\IN$ with $f(t)>0$, the number $k_t$ is well defined; thus it is clear that $g$ and $h$ as defined by the algorithm 
satisfy ($\ast$) for every $n\in\IN$.
It remains to prove the second condition in the statement of the lemma.

\smallskip

\emph{Claim 2.} For every $i$, the sequence $(s_i[t])_t$ is nondecreasing and eventually constant.

\smallskip

\emph{Proof:}
That $(s_i[t])_t$ is nondecreasing for every $i$ is clear by definition.
We show by induction over $i$ that it is eventually constant as well.

It is easy to see that $s_0[t]=0$ for all $t$. Thus 
consider an arbitrary number $i>0$.
By the induction hypothesis there exists a number $t_1$ such that, for all $t\geq t_1$, $s_{i-1}[t]=s_{i-1}[t_1]$.
Let~$t_2$~be large enough so that $t_2> t_1$ and
\[ f^{-1}\{1,\ldots, s_{i-1}[t_1]\} \subseteq \{0,\ldots,t_2-1\}  \]
(meaning that numbers smaller than $s_{i-1}[t_1]$ are enumerated by $f$ only in stages before~$t_2$).
Then, for every $t\geq t_2$ with $f(t)>0$
and for every $j<i$ we have
\[ s_j[t] \leq s_{i-1}[t] = s_{i-1}[t_1] \leq f(t)-1 , \]
hence, $k_t\geq i$ and consequently $s_i[t+1]=s_i[t]$.
By induction we obtain $s_i[t]=s_i[t_2]$, for all $t\geq t_2$.
Thus, $(s_i[t])_t$ is eventually constant, and Claim~2 is proven.\hfill$\Diamond$

\smallskip

Let the sequence $(S_i)_i$ be defined by $S_i:=\lim_{t\to\infty} s_i[t]$.
Due to Claim~1, $(S_i)_i$ is increasing.

\smallskip

\emph{Claim 3.} For every $i\in\IN$ and every $t\geq S_i$, $s_i[t]=S_i$.

\smallskip

\emph{Proof.} 
If this were not true then there would be some $t \geq S_i$ with $s_i[t+1]\neq s_i[t]$, hence,
with $S_i\geq s_i[t+1]=s_i[t]+t+1 \geq t+1 > S_i$, a contradiction.\hfill$\Diamond$

\smallskip

\emph{Claim 4.}
For every even $i$, 
$g^{-1}\{1,\ldots,S_i\}\subseteq \{0,\ldots,S_i-1\}$.

\smallskip

\emph{Proof.} 
Consider an even number $i$ as well as some $t\geq S_i$ with $g(t)>0$; we need to show that $g(t)>S_i$.
To see this, first observe that the assumption that $g(t)>0$ implies that we must have $f(t)=g(t)$ and that $k_t$ must be odd. Hence $k_t\neq i$.
If $k_t$ were smaller than $i$ then we would obtain $s_i[t+1] = s_i[t]+t+1 > s_i[t]=S_i$ in contradiction to Claim 3.
We conclude $i<k_t$. This implies $s_i[t]\leq f(t)-1$ by the definition of $k_t$.
As $t\geq S_i$, using Claim~3 again, we obtain 
\[S_i=s_i[t] \leq f(t)-1 <f(t) = g(t),\]
which proves Claim~4.\hfill$\Diamond$

\smallskip

\emph{Claim 5.}
For every odd $i$, 
$h^{-1}\{1,\ldots,S_i\}\subseteq \{0,\ldots,S_i-1\}$.

\smallskip

\emph{Proof.} The proof is symmetric to that of Claim 4; it is enough to interchange ``even'' and ``odd'' and to replace 
``$g$'' by  ``$h$''.\vphantom{\qedhere}\hfill$\quad\Diamond\,\square$
\end{proof}

As a corollary we immediately obtain the following uniformly effective version of Theorem~\ref{theorem:splitting}.
Note that the algorithm
even provides $\id_\IN$-good enumerations of $A$ and $B$. 
While we already showed in Theorem~\ref{theorem:regaining-sets} that 
$\id_\IN$-good computable enumerations exist 
for all regainingly approximable sets, its proof was not fully uniform.

\begin{theorem}
\label{theorem:splitting-effective}
Given an enumeration $f_C\colon\IN\to\IN$ of a set $C\subseteq\IN$ one can compute
enumerations without repetitions $f_A\colon\IN\to\IN$ of a set $A\subseteq\IN$ and $f_B\colon\IN\to\IN$ of a set $B\subseteq\IN$
such that 
\begin{enumerate}
\item
$C$ is the disjoint union of $A$ and $B$, and
\item
there exist infinitely many $t$ with 
\[A \cap \{0,\ldots,t-1\} \subseteq \mathrm{Enum}(f_A)[t]\]
and 
infinitely many $t$ with 
\[B \cap \{0,\ldots,t-1\} \subseteq \mathrm{Enum}(f_B)[t].\]
\end{enumerate}
\end{theorem}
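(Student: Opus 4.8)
The plan is to obtain the theorem as an essentially immediate consequence of Lemma~\ref{lemma:splitting}. The only gap between the two statements is that Lemma~\ref{lemma:splitting} requires its input function~$f$ to satisfy that $f^{-1}\{n+1\}$ is finite for every~$n$, whereas an arbitrary enumeration $f_C$ of~$C$ may enumerate some number into~$C$ infinitely often. So the first step is to replace $f_C$ by a computable enumeration \emph{without repetitions} $\widetilde{f}_C$ of the same set~$C$, which can be computed from $f_C$ as in Remark~\ref{remark:without_repetitions}; for such an enumeration the set $\widetilde{f}_C^{-1}\{n+1\}$ contains at most one element and hence is finite.

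Next, I would run the algorithm of Lemma~\ref{lemma:splitting} on input $\widetilde{f}_C$, obtaining computable functions $g,h\colon\IN\to\IN$ satisfying, for every $n$,
\[ |g^{-1}\{n+1\}| + |h^{-1}\{n+1\}| = |\widetilde{f}_C^{-1}\{n+1\}| , \]
together with an increasing sequence $(S_i)_i$ such that $g^{-1}\{1,\ldots,S_i\}\subseteq\{0,\ldots,S_i-1\}$ for all even~$i$ and $h^{-1}\{1,\ldots,S_i\}\subseteq\{0,\ldots,S_i-1\}$ for all odd~$i$. I would then set $f_A:=g$ and $f_B:=h$ and let $A$ and $B$ be the sets enumerated by these functions. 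The cardinality identity above immediately yields that $C$ is the disjoint union of $A$ and $B$: if $n\in C$ then $|\widetilde{f}_C^{-1}\{n+1\}|=1$, so exactly one of $g,h$ enumerates~$n$ and does so exactly once, while if $n\notin C$ then $|\widetilde{f}_C^{-1}\{n+1\}|=0$, so neither does. The same computation shows $|g^{-1}\{n+1\}|\le 1$ and $|h^{-1}\{n+1\}|\le 1$ for all~$n$, that is, that $f_A$ and $f_B$ are enumerations without repetitions.

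Finally, I would translate the inclusions involving the $S_i$ into the language of the theorem. The condition $g^{-1}\{1,\ldots,S_i\}\subseteq\{0,\ldots,S_i-1\}$ says exactly that every stage at which $g$ enumerates a number below $S_i$ lies below $S_i$; equivalently, $A\cap\{0,\ldots,S_i-1\}\subseteq\mathrm{Enum}(f_A)[S_i]$. Since $(S_i)_i$ is increasing and there are infinitely many even~$i$, there are infinitely many $t$ with $A\cap\{0,\ldots,t-1\}\subseteq\mathrm{Enum}(f_A)[t]$, and the argument for $B$ is symmetric using the odd indices. I do not anticipate a genuine difficulty here; the only points requiring care are performing the preliminary normalization of $f_C$ so that the finiteness hypothesis of Lemma~\ref{lemma:splitting} is met, and correctly matching the inverse-image formulation of that lemma with the $\mathrm{Enum}(\cdot)[\cdot]$ notation of the present statement.
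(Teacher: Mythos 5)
Your proposal is correct and takes exactly the same route as the paper: normalize $f_C$ to an enumeration without repetitions via Remark~\ref{remark:without_repetitions}, then apply the algorithm of Lemma~\ref{lemma:splitting} and read off the conclusions. The paper states this in a single sentence and leaves the translation between the inverse-image conditions of Lemma~\ref{lemma:splitting} and the $\mathrm{Enum}(\cdot)[\cdot]$ formulation implicit; you spell out those routine verifications, which is fine and adds nothing beyond what the paper's terse proof tacitly assumes.
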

\begin{proof}
First transform $f_C$ into an enumeration without repetitions~$\widetilde{f_C}$ as in Remark~\ref{remark:without_repetitions}; then apply the algorithm described in the proof of Lemma~\ref{lemma:splitting} to $\widetilde{f_C}$.
\end{proof}

Lemma~\ref{lemma:splitting} can also be applied to regular reals as defined by Wu~\cite{Wu2005}.
\begin{defi}[{Wu~\cite{Wu2005}}]\,
\begin{enumerate}
\item
For $n\in\IN$, a real number is called \emph{$n$-strongly computably enumerable} (\emph{$n$-strongly c.e.})
if it can be written as the sum of $n$~strongly left-computable numbers.
\item
If a real number is $n$-strongly c.e.\ for some $n\in\IN$, then it is called \emph{regular}.
\end{enumerate}
\end{defi}
Such numbers can be characterized conveniently by using the following
representation $\delta$ (in the sense of Weihrauch~\cite{Wei2000}) of non-negative real numbers:
Call $f\colon \IN\to\IN$ a \emph{$\delta$-name} of a non-negative real number~$\alpha$ if the series $\sum_k a_k$ defined by 
\[ a_k := \begin{cases}
    2^{-f(k)} & \text{if } f(k)>0, \\
    0 & \text{if } f(k)=0,
    \end{cases} 
\]
converges to $\alpha$. Note that, if some $f\colon\IN\to\IN$ is a $\delta$-name of a non-negative real number, then
$f^{-1}\{i+1\}$ must be finite for every $i\in\IN$; 
if we even have $|f^{-1}\{i+1\}| \leq n$ for some $n\in\IN$ and all $i\in\IN$, then we say that  $f$ is a \emph{$(\delta,n)$-name} of $\alpha$.
Then the following lemma is obvious.
\begin{lem}
\label{lemma:regular}
For $n\in\IN$, a non-negative real number $\alpha$ is $n$-strongly~c.e.\ if and only if there exists  a computable $(\delta,n)$-name $f$
of $\alpha$.
\end{lem}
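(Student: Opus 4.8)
The plan is to prove both directions by the obvious interleaving and de-interleaving of enumerations, leaning on the elementary correspondence between a c.e.\ set $A$ and a $(\delta,1)$-name of $2^{-A}$: if $f_A\colon\IN\to\IN$ is a computable enumeration of $A$ without repetitions (in the sense of Section~\ref{section:sets} and Remark~\ref{remark:without_repetitions}), then under the convention built into the definition of $\delta$-names we have $\sum_k a_k = \sum_{m\in A} 2^{-(m+1)} = 2^{-A}$, so $f_A$ is a $(\delta,1)$-name of $2^{-A}$; conversely, every $(\delta,1)$-name of a non-negative real number arises, after discarding the $0$-values, from such a repetition-free enumeration of a c.e.\ set.

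For the forward direction I would start from a decomposition $\alpha = x_1 + \dots + x_n$ with each $x_j = 2^{-A_j}$, $A_j\subseteq\IN$ c.e., fix computable repetition-free enumerations $f_j$ of the $A_j$, and interleave them into a single computable $f$, for instance by setting $f(nk+j-1) := f_j(k)$ for $1\le j\le n$ and $k\in\IN$. Since each $f_j$ takes each value $i+1$ at most once, $f$ takes it at most $n$ times, so $f$ is a $(\delta,n)$-name; and as all the terms $a_k$ are non-negative, the series for $f$ is merely a regrouping of $\sum_{j}\sum_k a_k^{(j)}$ and hence converges to $\sum_j x_j = \alpha$.

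For the converse I would split a given computable $(\delta,n)$-name $f$ into $n$ streams by enumeration multiplicity: for $1\le j\le n$ let $f_j(k) := f(k)$ exactly when $f(k) = m+1 > 0$ and stage $k$ is the $j$-th stage at which $f$ enumerates $m$, and $f_j(k) := 0$ otherwise. Each $f_j$ is then a computable repetition-free enumeration of a c.e.\ set $A_j$, the bound $|f^{-1}\{i+1\}|\le n$ ensures that every enumerating stage of $f$ is claimed by exactly one $f_j$, and non-negativity of the terms again makes $\sum_{j=1}^{n} 2^{-A_j} = \sum_k\sum_j a_k^{(j)} = \sum_k a_k = \alpha$, exhibiting $\alpha$ as a sum of $n$ strongly left-computable numbers.

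I expect no genuine obstacle here, consistent with the lemma being flagged as obvious; the only points needing a sentence of care are reconciling the paper's convention that $f(k)=0$ codes ``nothing is enumerated'' with the analogous convention for $\delta$-names, and observing that all the rearrangements used are regroupings of series with non-negative terms and are therefore valid without any summability side conditions.
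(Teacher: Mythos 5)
The paper does not give a proof of this lemma at all --- it merely states that the lemma ``is obvious'' after setting up the $\delta$-name machinery --- so there is nothing in the paper to compare your argument against. Your proof is correct: the forward direction interleaves repetition-free enumerations of the $n$ c.e.\ sets to bound the multiplicity of each value $i+1$ by $n$, the converse demultiplexes a $(\delta,n)$-name into $n$ repetition-free streams by occurrence count, the arithmetic on both sides reduces to a regrouping of a double series with non-negative terms, and you correctly note that the offset-by-one coding ($f(k)=m+1$ enumerates $m$, contributing $2^{-(m+1)}$) lines up exactly with the definition $2^{-A}=\sum_{a\in A}2^{-(a+1)}$; this is precisely the kind of routine verification the paper is leaving to the reader.
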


Applying our previous results to the regular reals, we obtain the following final theorem.
\begin{theorem}\,
\begin{enumerate}
\item
Given a $\delta$-name (or a $(\delta,n)$-name) $f_\gamma$ of a non-negative real number $\gamma$,
one can compute $\delta$-names (or $(\delta,n)$-names) $f_\alpha$ of a non-negative real number $\alpha$
and $f_\beta$ of a non-negative real number $\beta$ with $\alpha+\beta=\gamma$ and such that
there exist infinitely many $t$ with 
\[ f_\alpha^{-1}\{1,\ldots,t\}\subseteq \{0,\ldots,t-1\} \]
and infinitely many $t$ with 
\[ f_\beta^{-1}\{1,\ldots,t\}\subseteq \{0,\ldots,t-1\} . \]
\item
For every $n$ and every $n$-strongly c.e.\ real $\gamma$ there exist $n$-strongly c.e.\ reals $\alpha$
and $\beta$ with $\alpha+\beta=\gamma$ that are additionally regainingly approximable.  
\item
For every regular real $\gamma$ there exist regular reals $\alpha$
and $\beta$ with $\alpha+\beta=\gamma$ that are additionally regainingly approximable.  
\end{enumerate}
\end{theorem}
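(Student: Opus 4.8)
The plan is to derive all three parts from Lemma~\ref{lemma:splitting}, the characterization of $n$-strongly c.e.\ reals in Lemma~\ref{lemma:regular}, and the robustness of regaining approximability recorded in Proposition~\ref{prop:equivalent-conditions}. Part~(1) is essentially a direct application of Lemma~\ref{lemma:splitting}. Given a $\delta$-name $f_\gamma$ of a non-negative $\gamma$, recall from the discussion preceding Lemma~\ref{lemma:regular} that $f_\gamma^{-1}\{n+1\}$ is finite for every $n$, so Lemma~\ref{lemma:splitting} applies and yields computable $g,h\colon\IN\to\IN$ satisfying the counting identity $(\ast)$ of that lemma together with an increasing sequence $(S_i)_i$ such that $g^{-1}\{1,\ldots,S_i\}\subseteq\{0,\ldots,S_i-1\}$ for even $i$ and $h^{-1}\{1,\ldots,S_i\}\subseteq\{0,\ldots,S_i-1\}$ for odd $i$. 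I would set $f_\alpha:=g$ and $f_\beta:=h$. Since $(\ast)$ gives $|g^{-1}\{m+1\}|\le|f_\gamma^{-1}\{m+1\}|$ for all $m$, the series $\sum_m|g^{-1}\{m+1\}|\cdot 2^{-(m+1)}$ converges, so $g$ is a $\delta$-name of some $\alpha\ge 0$, and likewise $h$ is a $\delta$-name of some $\beta\ge 0$. Summing $(\ast)$ weighted by $2^{-(m+1)}$ gives $\alpha+\beta=\gamma$, and if $f_\gamma$ is a $(\delta,n)$-name then $(\ast)$ forces $|g^{-1}\{m+1\}|,|h^{-1}\{m+1\}|\le n$, so $f_\alpha,f_\beta$ are $(\delta,n)$-names as well. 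Finally, since $(S_i)_i$ is increasing, the values $t:=S_i$ for the infinitely many even $i$ witness that there are infinitely many $t$ with $f_\alpha^{-1}\{1,\ldots,t\}\subseteq\{0,\ldots,t-1\}$, and the odd indices do the same for $f_\beta$; this proves part~(1).

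For part~(2), let $\gamma$ be $n$-strongly c.e.; the case $n=0$ is trivial since then $\gamma=0$ and we may take $\alpha=\beta=0$, so assume $n\ge 1$. By Lemma~\ref{lemma:regular}, $\gamma$ has a computable $(\delta,n)$-name $f_\gamma$; applying part~(1) produces computable $(\delta,n)$-names $f_\alpha,f_\beta$ of non-negative reals $\alpha,\beta$ with $\alpha+\beta=\gamma$, and by Lemma~\ref{lemma:regular} again $\alpha$ and $\beta$ are $n$-strongly c.e. It remains to show that $\alpha$ (and symmetrically $\beta$) is regainingly approximable. Consider the computable, nondecreasing sequence of dyadic rationals $a_t:=\sum_{k<t,\,f_\alpha(k)>0}2^{-f_\alpha(k)}$, which converges to $\alpha$. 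For any of the infinitely many $t$ with $f_\alpha^{-1}\{1,\ldots,t\}\subseteq\{0,\ldots,t-1\}$, every $k\ge t$ with $f_\alpha(k)>0$ satisfies $f_\alpha(k)\ge t+1$, so, using $|f_\alpha^{-1}\{m+1\}|\le n$,
\[ \alpha-a_t=\sum_{m\ge t}\bigl|\{k\ge t:f_\alpha(k)=m+1\}\bigr|\cdot 2^{-(m+1)}\le\sum_{m\ge t}n\cdot 2^{-(m+1)}=n\cdot 2^{-t}. \]
Taking the computable, nondecreasing, unbounded function $f(t):=\max\{0,\,t-\log(n)-1\}$ and noting $2^{\log(n)+1}\ge 2n$, we get $\alpha-a_t\le n\cdot 2^{-t}<2n\cdot 2^{-t}\le 2^{\log(n)+1}\cdot 2^{-t}=2^{-f(t)}$ for all sufficiently large such $t$, hence $\alpha-a_t<2^{-f(t)}$ for infinitely many $t$. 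By the implication $(4)\Rightarrow(1)$ of Proposition~\ref{prop:equivalent-conditions}, $\alpha$ is regainingly approximable.

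Part~(3) is then immediate: a regular $\gamma$ is $n$-strongly c.e.\ for some $n$, and the reals $\alpha,\beta$ with $\alpha+\beta=\gamma$ supplied by part~(2) are $n$-strongly c.e., hence regular, and regainingly approximable. The main point requiring care is the last step of part~(2): the partial sums $a_t$ catch up to $\alpha$ only within $n\cdot 2^{-t}$ rather than within $2^{-t}$, so regaining approximability cannot be read off directly from the definition and one must instead invoke the slack provided by Proposition~\ref{prop:equivalent-conditions}(4). Everything else is routine bookkeeping with the identity $(\ast)$ of Lemma~\ref{lemma:splitting} and the definitions of $\delta$-names and $(\delta,n)$-names.
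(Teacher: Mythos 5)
Your proof is correct and follows the same route as the paper's: part (1) is Lemma~\ref{lemma:splitting} applied to $f_\gamma$ together with the bookkeeping identity $(\ast)$, part (2) reduces to showing that a $(\delta,n)$-name with infinitely many indices $t$ satisfying $f^{-1}\{1,\ldots,t\}\subseteq\{0,\ldots,t-1\}$ gives a regainingly approximable real via the $n\cdot 2^{-t}$ tail bound and Proposition~\ref{prop:equivalent-conditions}, and part (3) is immediate. You merely spell out details the paper leaves implicit (the convergence and $\delta$-name verifications in (1), the explicit choice of $f(t)$ when invoking Proposition~\ref{prop:equivalent-conditions}(4)), but the structure and key estimates are identical.
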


\begin{proof}
Applying the algorithm in the proof of Lemma~\ref{lemma:splitting} 
to $f_\gamma$ proves the first assertion.
For the second assertion, use the first assertion and the following claim.

\smallskip

\emph{Claim.} Let $f$ be a $(\delta,n)$-name of a non-negative real number $\alpha$
such that there exist infinitely many $t$ with $f^{-1}\{1,\ldots,t\}\subseteq \{0,\ldots,t-1\}$.
Then $\alpha$ is regainingly approximable.

\smallskip

\emph{Proof.} 
As above, define
\[ a_k := \begin{cases}
    2^{-f(k)} & \text{if } f(k)>0, \\
    0 & \text{if } f(k)=0,
    \end{cases} 
\]
and $A_t:=\sum_{k<t} a_k$.
Then the sequence $(A_t)_t$ is a computable nondecreasing sequence converging to $\alpha = \sum_{k=0}^\infty a_k$.
For any of the infinitely many $t$ with $f^{-1}\{1,\ldots,t\}\subseteq \{0,\ldots,t-1\}$ we obtain
\[ \alpha - A_t 
  = \sum_{k=t}^\infty a_k
  \leq n \cdot \sum_{j=t+1}^\infty 2^{-j}
  = n \cdot 2^{-t} .   
\]
By Proposition~\ref{prop:equivalent-conditions} this implies that $\alpha$ is regainingly approximable.\hfill$\Diamond$

\smallskip

The third follows immediately from the second assertion.
\end{proof}

\section{Acknowledgments}

The authors would like to thank the anonymous referees for insightful remarks, in particular for suggesting to explore the Kolmogorov complexity of regainingly approximable sets and numbers, as well as 
for suggesting to extend the characterization of the c.e.~sets $A\subseteq\IN$ such that $2^{-A}$ is regainingly approximable to a 
characterization of arbitrary (not necessarily c.e.) sets with this property.

\bibliography{regaining}
\bibliographystyle{abbrv}

\end{document}